\numberwithin{equation}{section}
\newtheorem{thm}[equation]{Theorem}
\newtheorem{lem}[equation]{Lemma}
\newtheorem{prop}[equation]{Proposition}
\newtheorem{cor}[equation]{Corollary}
\newtheorem{question}[equation]{Question}
\theoremstyle{definition}
\theoremstyle{remark}
\newtheorem{rem}[equation]{Remark}
\newtheorem{example}[equation]{Example}
\newcommand{\Hom}{\operatorname{Hom}}
\newcommand{\Ext}{\operatorname{Ext}}
\newcommand{\Tor}{\operatorname{Tor}}
\newcommand{\FP}{\operatorname{FP}}
\newcommand{\HNN}{\operatorname{HNN}}
\begin{document}

\title{Bieri-Eckmann Criteria for Profinite Groups}
\author{Ged Corob Cook}
\address{Department of Mathematics\\
Royal Holloway, University of London\\
Egham\\
Surrey TW20 0EX\\
UK}
\email{Ged.CorobCook.2012@live.rhul.ac.uk}
\thanks{This work was done as part of the author's PhD at Royal Holloway, University of London, supervised by Brita Nucinkis.}
\subjclass[2010]{Primary 20E18; Secondary 16E30, 20J05}
\keywords{Bieri-Eckmann criterion, profinite group cohomology, homological finiteness condition}

\begin{abstract}

In this paper we derive necessary and sufficient homological and cohomological conditions for profinite groups and modules to be of type $\FP_n$ over a profinite ring $R$, analogous to the Bieri-Eckmann criteria for abstract groups. We use these to prove that the class of groups of type $\FP_n$ is closed under extensions, quotients by subgroups of type $\FP_n$, proper amalgamated free products and proper $\HNN$-extensions, for each $n$. We show, as a consequence of this, that elementary amenable profinite groups of finite rank are of type $\FP_\infty$ over all profinite $R$. For any class $\mathcal{C}$ of finite groups closed under subgroups, quotients and extensions, we also construct pro-$\mathcal{C}$ groups of type $\FP_n$ but not of type $\FP_{n+1}$ over $\mathbb{Z}_{\hat{\mathcal{C}}}$ for each $n$. Finally, we show that the natural analogue of the usual condition measuring when pro-$p$ groups are of type $\FP_n$ fails for general profinite groups, answering in the negative the profinite analogue of a question of Kropholler.

\end{abstract}

\maketitle

\section*{Introduction}

An abstract group $G$ is of type $\FP_n$ over an abstract ring $R$ if the $R[G]$-module $R$ with trivial $G$-action has a projective resolution which is finitely generated for the first $n$ steps; Bieri (\cite[Theorem 1.1.3]{Bieri}) gives necessary and sufficient homological and cohomological conditions, the Bieri-Eckmann criteria, for an $R$-module to be of type $\FP_n$, and hence conditions for $G$ to be of type $\FP_n$ over $R$. 

Analogously, for $G$ and $R$ profinite, there is a profinite group ring $R \llbracket G \rrbracket $, and $G$ is of type $\FP_n$ if $R$ has a projective resolution by $R \llbracket G \rrbracket $-modules which is finitely generated for the first $n$ steps. Some results are known when $G$ and $R$ are pro-$p$: Symonds and Weigel \cite[Proposition 4.2.3]{S-W} give a necessary and sufficient condition for (virtually) pro-$p$ groups to be of type $\FP_n$. However, this has never been studied before for profinite groups. We show in this paper that whether a profinite group is of type $\FP_n$ is measured by whether its homology or cohomology groups commute with direct limits, in a certain sense. Explicitly, we consider certain direct systems of profinite modules whose direct limits as topological modules have underlying abstract modules that are isomorphic, but may not have the same topology. Applying homology or cohomology to these and then comparing the direct limits which result gives criteria for the $\FP_n$ type of $G$, a trick which allows us to forget the topologies of our systems without losing too much information. In this sense our Theorem \ref{b-ehom} is the analogue of \cite[Theorem 1.1.3]{Bieri}. We call these equivalent conditions Bieri-Eckmann criteria, by analogy with the abstract case.

We describe the structure of this paper. Section \ref{a-pmod} describes the mathematical objects, categories and functors we will be using in the rest of the paper, along with some basic results on the relations between them. The natural coefficient category for profinite group cohomology is that of discrete torsion modules, but this turns out not to contain enough information. Instead, we use profinite modules as coefficients. These do not have enough injectives, but we can define our $\Ext$ functors using projectives in the first variable.

In Section \ref{dspm}, we prove all the main technical results. For abstract rings and modules, the $\Ext$ functors commute with direct limits in the second variable, and the $\Tor$ functors commute with direct products in the second variable, if and only if the first variable is of type $\FP_\infty$. The idea here is to show that for profinite modules the $\Ext$ functors (with profinite coefficients) and the $\Tor$ functors commute with direct limits in the second variable if and only if the first variable is of type $\FP_\infty$; the obstruction is that direct limits of profinite modules in the category of topological modules are not necessarily profinite. Thus we take derived functors to functor categories over a small category $I$ which corresponds to a directed poset, and take direct limits afterwards, as described above. This allows us to obtain the Bieri-Eckmann criteria for profinite modules.

In Section \ref{groupdspm} we collect known results about the $\FP_n$ type of profinite groups, and apply the conclusions of Section \ref{dspm} to prove new results, giving Bieri-Eckmann criteria for groups and allowing us to build new groups of type $\FP_n$ from old ones, as promised in the abstract. Then in Section \ref{appl} we define the class of elementary amenable profinite groups (which contains the soluble groups) and use the results of the previous section to show that elementary amenable profinite groups of finite rank are of type $\FP_\infty$ over all profinite $R$; we also construct groups of type $\FP_n$ but not $\FP_{n+1}$ over certain completions of $\mathbb{Z}$, mirroring the construction in \cite[Proposition 2.14]{Bieri}.

Finally, Section \ref{altfin} explores an alternative finiteness condition $\FP'_n$, and shows that, though it is equivalent to being of type $\FP_n$ for pro-$p$ groups, it is not equivalent for profinite groups. The conditions $\FP_n$ and $\FP'_n$ are two different ways to generalise the pro-$p$ condition \cite[Proposition 4.2.3]{S-W} mentioned above. As promised in the abstract, we consider \cite[Open Question 6.12.1]{R-Z}, a question about homological finiteness conditions for pro-$p$ groups, which correspondingly has two possible ways of generalising to profinite groups: we show that the answer to one of these two questions is no.

\section{Abstract and Profinite Modules}
\label{a-pmod}

Let $R$ be a commutative profinite ring, and $\Lambda$ a profinite $R$-algebra. We define the categories $PMod(\Lambda)$, $DMod(\Lambda)$ and $TMod(\Lambda)$ to be the categories of profinite, discrete and topological left $\Lambda$-modules, respectively, with continuous $\Lambda$-module homomorphisms as their morphisms. We require for all of these that the $\Lambda$-action be continuous. We also define $Mod(\Lambda)$ to be the category of (abstract) left $\Lambda$-modules. The corresponding categories of right $\Lambda$-modules can and will be identified with categories of left $\Lambda^{op}$-modules.

It is well known that $PMod(\Lambda)$, $DMod(\Lambda)$ and $Mod(\Lambda)$ are abelian categories, that $PMod(\Lambda)$ and $Mod(\Lambda)$ have enough projectives, and that $DMod(\Lambda)$ and $Mod(\Lambda)$ have enough injectives (see \cite[Proposition 5.4.2, Proposition 5.4.4]{R-Z}). So we can apply the results of \cite{Myself} to additive functors, and their derived functors, on these categories. 

We write $\Hom_\Lambda(A,B)$ for $mor(A,B)$, where $A, B \in TMod(\Lambda)$. Note that $\Hom_\Lambda(A,B)$ is naturally an $R$-module. In addition, it will often be given the compact-open topology: we define the sets $$O_{K,U} = \{f \in \Hom_\Lambda(A,B): f(K) \subseteq U\}$$ to be open, whenever $K \subseteq A$ is compact and $U \subseteq B$ is open. Then the $O_{K,U}$ form a subbase for the topology. Note that $\Hom_\Lambda(A,B)$ is discrete for $A \in PMod(\Lambda)$, $B \in DMod(\Lambda)$, and profinite for $A \in DMod(\Lambda)$, $B \in PMod(\Lambda)$ -- see \cite[Lemma 5.1.4]{R-Z}.

We say $A$ is \emph{of type $\FP_n$ over $\Lambda$}, $n \leq \infty$, for $A \in PMod(\Lambda)$, if it has a projective resolution which is finitely generated for the first $n$ steps, and write $PMod(\Lambda)_n$ for the full subcategory of $PMod(\Lambda)$ whose objects are of type $\FP_n$. So $A \in PMod(\Lambda)_0$ if and only if $A$ is finitely generated. We let $PMod(\Lambda)_{-1} = PMod(\Lambda)$. If $A \in PMod(\Lambda)_0$, $B \in PMod(\Lambda)$, $\Hom_\Lambda(A,B)$ is profinite by \cite[(3.7.1)]{S-W}.

We can now define the main functors which will be needed in this paper. All are additive.

First, the forgetful functor $$U: PMod(\Lambda) \rightarrow Mod(U(\Lambda))$$ which forgets the topology but retains the algebraic structure; we will also write $U$ for the same forgetful functor $PMod(R) \rightarrow Mod(U(R))$. This $U$ is clearly exact.

Second, the \emph{completed tensor product} $$\hat{\otimes}_\Lambda: PMod(\Lambda^{op}) \times PMod(\Lambda) \rightarrow PMod(R)$$ -- see \cite[Chapters 5.5, 6.1]{R-Z} for definitions and properties of this right-exact functor -- and its derived functors $$\Tor^\Lambda_\ast: PMod(\Lambda^{op}) \times PMod(\Lambda) \rightarrow PMod(R).$$ We will also need the standard tensor product of abstract modules $$\otimes_{U(\Lambda)}: Mod(U(\Lambda^{op})) \times Mod(U(\Lambda)) \rightarrow Mod(U(R)).$$

The definition of completed tensor products says that there is a unique canonical continuous middle linear map $$B \times A \rightarrow B \hat{\otimes}_\Lambda A,$$ and continuous middle linear maps are clearly middle linear in the abstract sense, so by the universal property of abstract tensor products $$U(B \times A) \rightarrow U(B \hat{\otimes}_\Lambda A)$$ factors canonically (and uniquely) as $$U(B) \times U(A) \rightarrow U(B) \otimes_{U(\Lambda)} U(A) \rightarrow U(B \hat{\otimes}_\Lambda A).$$

The map $$U(B) \otimes_{U(\Lambda)} U(A) \rightarrow U(B \hat{\otimes}_\Lambda A)$$ induces a transformation of functors $$U(-) \otimes_{U(\Lambda)} \rightarrow U (- \hat{\otimes}_\Lambda -)$$ which is natural in both variables, by the universal property of $\otimes_{U(\Lambda)}$.

\begin{lem}
\label{otimes}
Suppose $A \in PMod(\Lambda), B \in PMod(\Lambda^{op})$.
\begin{enumerate}[(i)]
\item If $A$ is finitely generated and projective, the canonical map $$U(B) \otimes_{U(\Lambda)} U(A) \rightarrow U(B \hat{\otimes}_\Lambda A)$$ is an isomorphism.
\item If $A$ is finitely generated, $$U(B) \otimes_{U(\Lambda)} U(A) \rightarrow U(B \hat{\otimes}_\Lambda A)$$ is an epimorphism.
\item If $A$ is finitely presented, $$U(B) \otimes_{U(\Lambda)} U(A) \rightarrow U(B \hat{\otimes}_\Lambda A)$$ is an isomorphism.
\end{enumerate}
Similar results hold for $B$.
\end{lem}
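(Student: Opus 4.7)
The strategy is a bootstrap starting from the trivial case $A=\Lambda$ and extending by additivity and right-exactness in the second variable. For (i), the case $A=\Lambda$ is immediate: both $B\hat{\otimes}_\Lambda\Lambda$ and $U(B)\otimes_{U(\Lambda)} U(\Lambda)$ identify canonically with $B$ and $U(B)$ respectively, and under these identifications the canonical map is the identity. Both $U(B)\otimes_{U(\Lambda)} U(-)$ and $U(B\hat{\otimes}_\Lambda -)$ are additive in $A$ and hence preserve finite direct sums, and the canonical transformation is natural, so (i) extends to $A=\Lambda^n$. Any finitely generated projective of $PMod(\Lambda)$ is a direct summand of some $\Lambda^n$; naturality then presents the canonical map at such an $A$ as a direct summand of the canonical map at $\Lambda^n$, which must therefore itself be an isomorphism.

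For (ii) and (iii), fix a surjection $\Lambda^n \twoheadrightarrow A$ when $A$ is finitely generated, and extend to a presentation $\Lambda^m \to \Lambda^n \to A \to 0$ when $A$ is finitely presented. The ordinary tensor product is right exact, $\hat{\otimes}_\Lambda$ is right exact by \cite[Chapter 5.5]{R-Z}, and $U$ is exact, so both $U(B)\otimes_{U(\Lambda)}U(-)$ and $U(B\hat{\otimes}_\Lambda -)$ are right exact. Applying each to the above sequence yields a commutative ladder with exact rows whose leftmost vertical maps are isomorphisms by (i). For (ii), surjectivity of the middle vertical combined with right-exactness of the bottom row forces the rightmost vertical to be surjective by a one-step diagram chase. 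For (iii), the standard right-exact five-lemma — two left vertical isomorphisms over exact rows forcing the right vertical to be an isomorphism — finishes the argument.

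The symmetric statements for $B$ are obtained by interchanging the roles of $A\in PMod(\Lambda)$ and $B\in PMod(\Lambda^{op})$ throughout. Nothing in this plan looks delicate: the only non-trivial input is the cited right-exactness of $\hat{\otimes}_\Lambda$, and the argument is essentially the same as the familiar proof that the natural map $\Hom(A,\Lambda)\otimes N \to \Hom(A,N)$ is an isomorphism for $A$ finitely presented.
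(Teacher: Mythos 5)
Your proposal is correct and takes essentially the same approach as the paper: bootstrap from $A=\Lambda$ via additivity and direct summands for (i), then apply right exactness of both tensor functors and a five-lemma-type diagram chase for (ii) and (iii). The only cosmetic difference is that for (iii) the paper uses a short exact sequence $0\to K\to F\to A\to 0$ with $K$ finitely generated and invokes part (ii) to get an epimorphism at $K$, whereas you use a two-step free presentation $\Lambda^m\to\Lambda^n\to A\to 0$ and invoke part (i) twice; both feed the same right-exact five lemma.
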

\begin{proof}
\begin{enumerate}[(i)]
\item First suppose $A$ is finitely generated and free. Then the result follows from \cite[Proposition 5.5.3 (b),(c)]{R-Z} that $B \hat{\otimes}_\Lambda \Lambda \cong B$ and $B \hat{\otimes}_\Lambda -$ is additive, so that $U(B \hat{\otimes}_\Lambda \Lambda^n)$ and $U(B) \otimes_{U(\Lambda)} U(\Lambda^n)$ are both isomorphic to $U(B^n)$. Since projectives are summands of frees, the result follows for $A$ finitely generated and projective as well.

\item Consider the short exact sequence $0 \rightarrow K \rightarrow F \rightarrow A \rightarrow 0$ with $F$ free and finitely generated. We get a commutative diagram
\[
\xymatrix{\cdots \ar[r] & U(B) \otimes_{U(\Lambda)} U(F) \ar[r] \ar[d]^\cong & U(B) \otimes_{U(\Lambda)} U(A) \ar[r] \ar[d] & 0 \\
\cdots \ar[r] & U(B \hat{\otimes}_\Lambda F) \ar[r] & U(B \hat{\otimes}_\Lambda A) \ar[r] & 0 \rlap{,}}
\]
and the result follows by the Five Lemma.

\item Consider the short exact sequence $0 \rightarrow K \rightarrow F \rightarrow A \rightarrow 0$ with $F$ free and finitely generated, and $K$ finitely generated. We get a commutative diagram
\[
\xymatrix@C-5pt{\cdots \ar[r] & U(B) \otimes_{U(\Lambda)} U(K) \ar[r] \ar@{->>}[d] & U(B) \otimes_{U(\Lambda)} U(F) \ar[r] \ar[d]^\cong & U(B) \otimes_{U(\Lambda)} U(A) \ar[r] \ar[d] & 0 \\
\cdots \ar[r] & U(B \hat{\otimes}_\Lambda K) \ar[r] & U(B \hat{\otimes}_\Lambda F) \ar[r] & U(B \hat{\otimes}_\Lambda A) \ar[r] & 0 \rlap{,}}
\]
and the result follows by the Five Lemma.
\end{enumerate}
\end{proof}

The third and final functor we will need is $$\Hom_\Lambda(-,-): PMod(\Lambda) \times PMod(\Lambda) \rightarrow Mod(U(R)).$$ If $A \in PMod(\Lambda)_0$, we may also think of $\Hom_\Lambda(-,-)$ as a functor $$PMod(\Lambda)_0 \times PMod(\Lambda) \rightarrow PMod(R),$$ using the compact-open topology. In either case, we can take left derived functors of $\Hom_\Lambda(-,B)$. Explicitly, we can define $$\Ext_\Lambda^\ast(-,B): PMod(\Lambda) \rightarrow Mod(U(R)),$$ and when considering the case where the first variable is of type $\FP_\infty$, we can endow the resulting $R$-modules with a profinite topology to give $$\Ext_\Lambda^\ast(-,B): PMod(\Lambda)_\infty \rightarrow PMod(R).$$ By \cite[Proposition 3.4]{Myself}, the $\Ext_\Lambda^n(-,-)$ are bifunctors $$PMod(\Lambda) \times PMod(\Lambda) \rightarrow Mod(U(R)),$$ and we get long exact sequences in each variable. We will also need the standard $\Hom$-functor of abstract modules, which we will write $$\hom_{U(\Lambda)}(-,-): Mod(U(\Lambda)) \times Mod(U(\Lambda)) \rightarrow Mod(U(R)).$$ There is a canonical natural transformation $$\Hom_\Lambda(-,-) \rightarrow \hom_{U(\Lambda)}(U(-),U(-))$$ with each $$\Hom_\Lambda(A,B) \rightarrow \hom_{U(\Lambda)}(U(A),U(B))$$ given by the inclusion of the group of continuous homomorphisms into the group of abstract homomorphisms (after forgetting the topology if we are considering $\Hom_\Lambda(A,B)$ as a topological $R$-module).

\begin{lem}
\label{hom}
Suppose $A \in PMod(\Lambda), B \in PMod(\Lambda)$. If $A$ is finitely generated, the canonical map $$\Hom_\Lambda(A,B) \rightarrow \hom_{U(\Lambda)}(U(A),U(B))$$ is an isomorphism.
\end{lem}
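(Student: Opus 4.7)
The map is always injective, since continuous homomorphisms are in particular abstract homomorphisms. Hence the content is surjectivity: an arbitrary abstract $\Lambda$-module homomorphism $f : U(A) \to U(B)$ must automatically be continuous as a map $A \to B$. My plan is to reduce to the case $A = \Lambda^n$, where such a homomorphism is given by an explicit formula that is manifestly continuous.

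Because $A \in PMod(\Lambda)_0$, there is a continuous surjection $\pi : \Lambda^n \twoheadrightarrow A$ for some $n$. The profinite modules $\Lambda^n$ and $A$ are compact Hausdorff, so $\pi$ is a closed map and therefore a topological quotient map. Consequently $f : A \to B$ is continuous if and only if $f \circ \pi : \Lambda^n \to B$ is continuous, so it suffices to prove the lemma for $A = \Lambda^n$. Any abstract $\Lambda$-homomorphism $g : \Lambda^n \to B$ is determined by the tuple $(b_1, \dots, b_n) = (g(e_1), \dots, g(e_n))$, where $e_1, \dots, e_n$ is the standard basis, via the formula $g(\lambda_1, \dots, \lambda_n) = \sum_{i=1}^n \lambda_i b_i$. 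The map $(\lambda_1, \dots, \lambda_n) \mapsto \sum \lambda_i b_i$ is continuous because the $\Lambda$-action on $B$ is continuous (so each $\lambda \mapsto \lambda b_i$ is continuous) and addition in $B$ is continuous. Hence $g$ is continuous, which completes the proof.

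The main (minor) obstacle is the reduction step: one has to recognise that \emph{topological} generation by finitely many elements suffices, even though abstractly $A$ will typically not be finitely generated as a $U(\Lambda)$-module. The point is that any abstract $U(\Lambda)$-homomorphism out of $\Lambda^n$ is determined by its values on the \emph{abstract} generators $e_1, \dots, e_n$, and these in turn determine a continuous map by the explicit formula; the compact Hausdorff quotient argument then transfers continuity from $\Lambda^n$ down to $A$.
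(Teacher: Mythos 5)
Your proof is correct. Note that the paper does not give its own argument for this lemma -- it simply cites \cite[Lemma 7.2.2]{Wilson} -- so there is nothing internal to compare against; your proposal supplies a clean, self-contained proof. The two key observations are exactly right: (1) a continuous surjection $\pi : \Lambda^n \twoheadrightarrow A$ between compact Hausdorff spaces is a topological quotient map, so continuity of $f : A \to B$ is equivalent to continuity of $f\circ\pi$; and (2) $\Lambda^n$ is free as an \emph{abstract} $U(\Lambda)$-module on $e_1,\dots,e_n$, so an abstract homomorphism out of it is given by the formula $(\lambda_1,\dots,\lambda_n)\mapsto\sum_i\lambda_i b_i$, which is continuous because the module action $\Lambda\times B\to B$ and addition $B\times B\to B$ are continuous. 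This is the standard argument for automatic continuity of abstract homomorphisms from topologically finitely generated profinite modules, and it is (modulo presentation) the same as the argument in Wilson.

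One small point worth making explicit in a final write-up: surjectivity of $\pi$ follows because the image $\pi(\Lambda^n)$ is a compact, hence closed, submodule of $A$ containing the topological generators, so it must be all of $A$.
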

\begin{proof}
See \cite[Lemma 7.2.2]{Wilson}.
\end{proof}

\section{Direct Systems of Profinite Modules}
\label{dspm}

Let $I$ be a \emph{directed poset}: a poset with the property that for any $i_1,i_2 \in I$ there is some $i \in I$ such that $i \geq i_1,i_2$. It is easy to check that if $I$ and $J$ are directed posets, $I \times J$ is again directed, when we define $(i_1,j_1) \leq (i_2,j_2)$ if and only if $i_1 \leq i_2$ and $j_1 \leq j_2$. We can define a category $I'$ whose objects are the elements of $I$ and which has a single morphism $i \rightarrow j$ whenever $i \leq j$. Then a covariant functor from $I'$ to some other category $C$ is exactly the same thing as a \emph{direct system} in $C$ indexed by $I$ -- and similarly contravariant functors correspond to inverse systems. Henceforth, directed posets and direct systems will be identified with the corresponding categories and functors.

In the terminology of \cite{Myself}, given a category $C$ and a small category $I$, we can define the functor category $C^I$ as the category of functors $I \rightarrow C$ and natural transformations between them. We can also define, for a functor $F: C \rightarrow D$, the exponent functor $F^I: C^I \rightarrow D^I$: given $f \in C^I$, set $F^I(f)(i) = F(f(i))$, and similarly for morphisms. Henceforth, given any functor $F$ and small category $I$, $F^I$ will denote this exponent functor. In this paper we will be interested in the functor category $PMod(\Lambda)^I$ when $I$ is a directed poset, and in the exponents of the functors defined in the previous section.

In particular we have the functor $$U^I: PMod(\Lambda)^I \rightarrow Mod(U(\Lambda))^I$$ which forgets the topology on each module in a directed system in $PMod(\Lambda)^I$. Now $U$ is exact, so $U^I$ is also exact, by \cite[Lemma 1.9]{Myself}. Second, we have the \emph{direct limit} functor $\varinjlim$ which sends a directed system of (abstract) $\Lambda$-modules to their colimit in the category of (abstract) $\Lambda$-modules. It is well known that for a directed poset $I$ $\varinjlim$ is an exact additive functor $Mod(U(\Lambda))^I \rightarrow Mod(U(\Lambda))$ and $Mod(U(R))^I \rightarrow Mod(U(R))$. So we can compose these two exact functors; it follows that their composition $$\varinjlim U^I: PMod(\Lambda)^I \rightarrow Mod(U(\Lambda)),$$ which forgets the topology on a direct system of modules and then takes its direct limit, is exact. Thus we can compose this (composite) functor with a homological $\delta$-functor to get another homological $\delta$-functor, because long exact sequences are preserved. 

By \cite[Proposition 3.3]{Myself}, we have a long exact sequence in each variable of the exponent homological $\delta$-functor $$\Tor^{\Lambda, I \times J}_\ast: PMod(\Lambda^{op})^I \times PMod(\Lambda)^J \rightarrow PMod(R)^{I \times J},$$ for any posets $I$ and $J$. By \cite[Proposition 3.5]{Myself}, we have a long exact sequence in each variable of $$\Ext_\Lambda^{\ast, I \times J}: PMod(\Lambda)^I \times PMod(\Lambda)^J \rightarrow Mod(U(R))^{I \times J}.$$ When $J$ consists of a single element we may write $\Tor^{\Lambda, I}_\ast$ and $\Ext_\Lambda^{\ast, I}$; similarly in the other variable.

We can now start proving some results. For our main result of the section, we need this preliminary lemma, whose proof is an easy adaptation of \cite[Lemma 2]{Shannon}.

\begin{lem}
\label{directsystem}
For every profinite module $B \in PMod(\Lambda^{op})$, there is a direct system $\{B^i\}$ of finitely presented modules in $PMod(\Lambda^{op})$ with a collection of continuous compatible maps $B^i \rightarrow B$ such that the induced map $$\varinjlim U(B^i) \rightarrow U(B)$$ is an isomorphism.
\end{lem}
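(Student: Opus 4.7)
The plan is to adapt the classical Lazard–Shannon construction that every abstract module is a filtered colimit of finitely presented ones, taking minor extra care for the profinite topology. Let $I$ be the collection of triples $(F,K,\phi)$ in which $F$ is a finitely generated free object of $PMod(\Lambda^{op})$, $K \subseteq F$ is a (closed) finitely generated submodule, and $\phi: F/K \to B$ is a continuous $\Lambda^{op}$-homomorphism; set $B^{(F,K,\phi)} := F/K$, which is finitely presented in $PMod(\Lambda^{op})$ by construction. Order $I$ by declaring $(F,K,\phi) \leq (F',K',\phi')$ when there is a continuous $\Lambda^{op}$-morphism $F/K \to F'/K'$ through which $\phi$ factors as $\phi'$; pass to the associated poset quotient if one wishes to avoid a mere preorder.

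Next I would verify that $I$ is directed. Given $(F_1,K_1,\phi_1)$ and $(F_2,K_2,\phi_2)$, the triple $(F_1\oplus F_2,\ K_1\oplus K_2,\ \phi_1+\phi_2)$ provides a common upper bound via the canonical inclusions. This triple really lies in $I$ because direct sums of finitely generated free profinite modules are again finitely generated and free, and the $K_i$ extend to a finitely generated closed submodule of the sum.

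Let $\theta: \varinjlim U(B^i) \to U(B)$ be the induced map, which I then want to show is an isomorphism. For surjectivity, given $b \in U(B)$ the triple $(\Lambda, 0, \phi_b)$ with $\phi_b(1)=b$ lies in $I$ and hits $b$. For injectivity, suppose an element is represented by $\bar y \in F/K$ with $\phi(\bar y)=0$; then $y \in F$ lies in the kernel of $F \to B$, so $K' := K + \Lambda y$, the submodule generated by $K$ together with $y$, is a finitely generated closed submodule of $F$, and $\phi$ descends to a continuous $\phi': F/K' \to B$. The new triple $(F,K',\phi')$ lies in $I$ and dominates $(F,K,\phi)$ via the canonical projection $F/K \twoheadrightarrow F/K'$, under which $\bar y$ is sent to $0$. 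Therefore $\bar y$ already represents the zero class in $\varinjlim U(B^i)$, which gives injectivity.

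The one point that is not purely formal is checking that $\Lambda y$ and $K+\Lambda y$ are genuine closed finitely generated profinite submodules rather than just abstract submodules. Closedness is immediate because $\Lambda$ is compact and $F$ is Hausdorff, so the image of $\Lambda \to F,\ \lambda\mapsto \lambda y$ is compact and hence closed; the sum $K+\Lambda y$ is likewise the continuous image of the compact module $K\oplus\Lambda$ and so is closed, and finite generation of the sum is immediate from finite generation of the summands. With that observation in hand, the argument is a direct transcription of Shannon's abstract lemma into $PMod(\Lambda^{op})$ followed by application of the exact functor $U$.
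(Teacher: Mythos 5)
Your proposal is close in spirit to the paper's argument (both adapt Shannon's construction), but there is a genuine gap in the setup of the index poset that the paper avoids and your proof does not.

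You index by triples $(F,K,\phi)$ and order them by declaring $(F,K,\phi)\le(F',K',\phi')$ when \emph{some} continuous map $F/K\to F'/K'$ carries $\phi$ to $\phi'$, then suggest passing to the poset quotient. The problem is that this relation by itself does not define a direct system: for $i\le j$ there may be many morphisms $B^i\to B^j$ over $B$, and a direct system requires a specific, compatible choice of transition map for each comparable pair. Passing to the poset quotient makes this worse rather than better, since two triples can dominate each other without $F/K$ and $F'/K'$ being canonically (or even abstractly) isomorphic, so the assignment $i\mapsto B^i$ is not well defined on the quotient. In effect you have described the \emph{filtered category} of finitely presented objects over $B$ (the comma category), which is the right object in Lazard's theorem, but it is not a directed poset, and the lemma is invoked later under the standing hypothesis that $I$ is a directed poset.

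The paper sidesteps this by fixing a single free profinite module $F$ with basis $B$ and indexing by pairs $(F_S,V)$ with $S\subseteq B$ finite and $V\subseteq F_S$ a finitely generated submodule killed by $F_S\to B$, ordered by inclusion of $S$ and $V$. With that choice the transition map $F_S/V\to F_T/W$ is \emph{the} canonical one induced by the inclusions, so the direct system is unambiguous and the poset is genuinely directed. Your surjectivity and injectivity arguments, and the closedness/compactness observations, are fine and would carry over once the indexing is repaired in this way.
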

\begin{proof}
Let $F$ be the free profinite right $\Lambda$-module with basis $B$. By the universal property of free modules, the identity map $B \rightarrow B$ extends to a canonical continuous homomorphism of profinite modules $F \rightarrow B$. Consider the set of all pairs $(F_S, V)$ where $S$ is a finite subset of $B$, $F_S$ is the free profinite submodule of $F$ generated by $S$ and $V$ is a finitely generated profinite submodule of $F$ such that the composite $$V \hookrightarrow F_S \rightarrow B$$ is the zero map. Define a partial order on this set by $$(F_S, V) \leq (F_T, W) \Leftrightarrow S \subseteq T \text{ and } V \subseteq W.$$ This is clearly directed, so we get a direct system of finitely presented profinite modules $F_S/V$ with the canonical continuous module homomorphisms between them, and canonical compatible continuous module homomorphisms $F_S/V \rightarrow B$. Forgetting the topology by applying $U$, we get a direct system of abstract modules with a compatible collection of module homomorphisms $$f_{S,V}: U(F_S/V) \rightarrow U(B),$$ and hence a module homomorphism $$f: \varinjlim U(F_S/V) \rightarrow U(B).$$ We claim $f$ is an isomorphism. Given $b \in B$, $b$ is in the image of $$f_{\{b\},0}: U(F_{\{b\}}) \rightarrow U(B),$$ and hence it is in the image of $f$. So $f$ is surjective. Given $x$ in the kernel of $f$, take a representative $x'$ of $x$ in one of the $U(F_S/V)$, so $f_{S,V}(x')=0$, and a representative $x''$ of $x'$ in $U(F_S)$. Now suppose $V$ is generated by $x_1, \ldots, x_n$. Let $V'$ be the profinite submodule of $F_S$ generated by $x_1, \ldots, x_n, x''$, so that $V'$ is finitely generated. Note that the composite $$V' \hookrightarrow F_S \rightarrow B$$ is the zero map, and that $(F_S, V) \leq (F_S, V')$. Finally, note that the image of $x'$ in $F_S/V'$ is $0$, and hence the image of $x'$ in $\varinjlim U(F_S/V)$ is $0$, so $x=0$. So $f$ is injective.
\end{proof}

Note that this result is weaker than saying that $B$ can be written as a direct limit of finitely presented profinite modules; indeed, taking the direct limit in $TMod(\Lambda^{op})$ of the system of profinite modules described above will not in general give a profinite module. This will be a recurring theme throughout the paper: that we are required to consider certain direct systems of profinite modules whose direct limits as topological modules have underlying abstract modules that are isomorphic, but may not have the same topology. It is in this way that the following theorem is the profinite analogue of \cite[Theorem 1.1.3]{Bieri}.

\begin{thm}
\label{b-ehom}
Suppose $A \in PMod(\Lambda)$. The following are equivalent:
\begin{enumerate}[(i)]
\item $A \in PMod(\Lambda)_n$.
\item If $I$ is a directed poset, and $B,C \in PMod(\Lambda^{op})^I$, with a morphism $f: B \rightarrow C$ such that $$\varinjlim U^I (f): \varinjlim U^I (B) \rightarrow \varinjlim U^I (C)$$ is an isomorphism, then the induced maps
\begin{align*}
\varinjlim U^I \Tor^{\Lambda, I}_m(f): &\varinjlim U^{I} \Tor^{\Lambda, I}_m (B,A) \\
\rightarrow &\varinjlim U^I \Tor^{\Lambda, I}_m (C,A)
\end{align*}
are isomorphisms for $m<n$ and an epimorphism for $m=n$.
\item For all products $\prod \Lambda$ of copies of $\Lambda$, (ii) holds when $C$ has as each of its components $\prod \Lambda$, with identity maps between them, for some $B$ with each component finitely presented.
\item If $I$ is a directed poset, and $B,C \in PMod(\Lambda)^I$, with a morphism $f: B \rightarrow C$ such that $$\varinjlim U^I (f): \varinjlim U^I (B) \rightarrow \varinjlim U^I (C)$$ is an isomorphism, then the induced maps
\begin{align*}
\varinjlim U^I \Ext_\Lambda^{m, I}(f): &\varinjlim U^{I} \Ext_\Lambda^{m, I} (A,B) \\
\rightarrow &\varinjlim U^{I} \Ext_\Lambda^{m, I} (A,C)
\end{align*}
are isomorphisms for $m<n$ and a monomorphism for $m=n$.
\item (iv) holds when $C$ has $0$ as each of its components, for some $B$ with each component finitely presented.
\end{enumerate}
\end{thm}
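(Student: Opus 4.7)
The plan is to establish the chains $(i)\Rightarrow(ii)\Rightarrow(iii)\Rightarrow(i)$ and $(i)\Rightarrow(iv)\Rightarrow(v)\Rightarrow(i)$. A useful preliminary is that $A$ has profinite type $\FP_n$ over $\Lambda$ if and only if $U(A)$ has abstract type $\FP_n$ over $U(\Lambda)$: a continuous epimorphism $\Lambda^k\twoheadrightarrow A$ becomes an abstract epimorphism because $U$ is exact, and conversely any finite subset abstractly generating $U(A)$ also topologically generates $A$, since the closed $\Lambda$-submodule it spans already contains every element of $A$; an induction on $n$ via Schanuel's lemma handles higher $\FP_n$. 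This allows me to compare with the classical Bieri-Eckmann theorem \cite[Theorem~1.1.3]{Bieri} applied to $U(A)$.

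For $(i)\Rightarrow(ii)$, choose a projective resolution $P_\bullet\to A$ with $P_0,\dots,P_n$ finitely generated, and thus finitely presented in $PMod(\Lambda)$ (as summands of finitely generated frees). Lemma~\ref{otimes}(iii) gives natural isomorphisms $U(B_i\hat{\otimes}_\Lambda P_m)\cong U(B_i)\otimes_{U(\Lambda)} U(P_m)$ for $m\leq n$, so the chain complex $U^I(B\hat{\otimes}_\Lambda P_\bullet)$ coincides with $U^I(B)\otimes_{U(\Lambda)} U(P_\bullet)$ in these degrees. Since $U$ and $\varinjlim$ are each exact (hence commute with homology) and abstract tensor product commutes with direct limits, I obtain
\[\varinjlim U^I \Tor^{\Lambda,I}_m(B,A)\;\cong\;\operatorname{tor}^{U(\Lambda)}_m\bigl(\varinjlim U^I(B),\,U(A)\bigr)\]
for $m<n$ and a canonical surjection onto this object for $m=n$ (reflecting that $P_{n+1}$ need not be finitely presented, so the image from that degree in the completed complex can be strictly larger than in the abstract one). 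Since $\operatorname{tor}$ commutes with direct limits and $\varinjlim U^I(f)$ is an isomorphism, the map $f$ induces isomorphisms for $m<n$ and an epimorphism for $m=n$. The implication $(i)\Rightarrow(iv)$ is analogous via Lemma~\ref{hom}: here $U(P_m)$ is abstractly finitely presented in the same range (being a summand of $U(\Lambda)^k$), so $\hom_{U(\Lambda)}(U(P_m),-)$ commutes with filtered direct limits, and the degree $n+1$ discrepancy now yields a monomorphism rather than an epimorphism, as befits $\Ext$.

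The specializations $(ii)\Rightarrow(iii)$ and $(iv)\Rightarrow(v)$ follow from Lemma~\ref{directsystem}, which supplies direct systems of finitely presented profinite modules with the requisite abstract colimit. For the reverse directions I induct on $n$. In the base case $n=0$ of $(iii)$, applied with $C$ constant at $\prod_S\Lambda$ for arbitrary $S$, Lemma~\ref{otimes}(iii) and standard properties of the completed tensor product translate the hypothesis into surjectivity of $U(\Lambda)^S\otimes_{U(\Lambda)} U(A)\to U(A)^S$; Bieri's classical criterion forces $U(A)$ to be abstractly finitely generated, whence $A$ is profinitely finitely generated by the preliminary. For the inductive step, assuming $A\in PMod(\Lambda)_{n-1}$, choose a partial projective resolution $P_{n-1}\to\cdots\to P_0\to A\to 0$ with each $P_m$ finitely generated, set $K=\ker(P_{n-1}\to P_{n-2})$, and use dimension shifting along the long exact sequence for $\Tor^{\Lambda,I}_\ast$ from \cite[Proposition~3.3]{Myself} to reduce hypothesis $(iii)$ for $A$ at degree $n$ to the corresponding hypothesis on $K$ at degree $0$; the base case shows $K$ is finitely generated, so $A\in PMod(\Lambda)_n$. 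The $\Ext$-based argument for $(v)\Rightarrow(i)$ is parallel.

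The main obstacle lies in the base case of $(iii)\Rightarrow(i)$: matching Bieri's surjectivity criterion to the profinite hypothesis requires careful attention to how abstract and topological direct limits interact---precisely the motivation for the $\varinjlim U^I$ formalism. A secondary subtlety in the inductive step is verifying that the long exact sequence of $\Tor^{\Lambda,I}_\ast$ remains exact under $\varinjlim U^I$ (which it does, as $U$ and $\varinjlim$ are each individually exact) and that a direct system produced by Lemma~\ref{directsystem} for $K$ plays well with the dimension shift.
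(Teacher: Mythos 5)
The overall shape of your argument matches the paper's: the cycle $(i)\Rightarrow(ii)\Rightarrow(iii)\Rightarrow(i)$ and $(i)\Rightarrow(iv)\Rightarrow(v)\Rightarrow(i)$, with Lemma~\ref{otimes} and Lemma~\ref{hom} used to replace $\hat\otimes$ by $\otimes$ and $\Hom$ by $\hom$ on the finitely generated part of the resolution, and dimension shifting for the inductive step. Your preliminary claim (profinite $\FP_n$ over $\Lambda$ $\Leftrightarrow$ abstract $\FP_n$ of $U(A)$ over $U(\Lambda)$) does appear to be correct as you sketch it, but it does no real work: the only place you invoke it is to pass from ``$U(A)$ abstractly finitely generated'' to ``$A$ topologically finitely generated'', which is the trivial direction and not a genuine reduction. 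The harder content of the theorem --- matching the restricted class of test systems $\varinjlim U^I B$ against what Bieri's abstract criterion quantifies over --- is not resolved by that preliminary, and the paper does not use any such reduction.

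There is a genuine gap in your proof of $(i)\Rightarrow(ii)$ (and symmetrically $(i)\Rightarrow(iv)$) for $m=n$. You claim a ``canonical surjection'' $\varinjlim U^I\Tor^{\Lambda,I}_n(B,A)\twoheadrightarrow\operatorname{tor}^{U(\Lambda)}_n(\varinjlim U^I(B),U(A))$, but even granting such a surjection exists, having $X\twoheadrightarrow Z$, $Y\twoheadrightarrow Z$ and a commuting map $X\to Y$ does not make $X\to Y$ surjective; so the desired epimorphism $\varinjlim U^I\Tor^{\Lambda,I}_n(f)$ does not follow from identifying both sides over a common $\operatorname{tor}_n$. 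Moreover the surjection itself is not well-posed: $U(P_\bullet)$ is only a projective resolution of $U(A)$ through degree $n$, so $H_n$ of $(\varinjlim U^I(B))\otimes U(P_\bullet)$ cannot simply be compared to $\operatorname{tor}_n$. The paper avoids this by working directly at the chain level: after taking $\varinjlim$, the chain map induced by $f$ is an isomorphism in degrees $0,\dots,n$, which by elementary chasing gives an isomorphism on $H_m$ for $m<n$ and an epimorphism on $H_n$ without any reference to $\operatorname{tor}$.

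Two further points. First, $(ii)\Rightarrow(iii)$ and $(iv)\Rightarrow(v)$ are trivial specializations (restricting the quantifier over $(B,C)$); Lemma~\ref{directsystem} plays no role there. It is in $(iii)\Rightarrow(i)$ that you need to produce a test system $B$ with finitely presented components and $\varinjlim U^I(B)\cong U(\prod_X\Lambda)$, and that is where Lemma~\ref{directsystem} enters. Second, the claim that $(v)\Rightarrow(i)$ is ``parallel'' to $(iii)\Rightarrow(i)$ is not substantiated. On the $\Tor$ side the base case proceeds by hitting a diagonal element in $\prod_X A$ via an epimorphism out of a tensor product; on the $\Ext$ side there is no analogous target, and the paper instead uses the system $\{A/A'\}$ over finitely generated submodules $A'$ of $A$, whose abstract colimit is zero, and extracts finite generation from $\varinjlim\Hom_\Lambda(A,A/A')=0$. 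This is a different mechanism, and any attempt to outsource it to Bieri's abstract $\Ext$ criterion would have to explain why the restricted profinite test class in $(v)$ is rich enough --- which is exactly the point the $\{A/A'\}$ construction addresses.
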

\begin{proof}
(i) $\Rightarrow$ (ii): Take a projective resolution $P_\ast$ of $A$ with $P_0, \ldots P_n$ finitely generated. Then for each $i \in I$ we get a diagram

\scalebox{0.83}{\begin{xy}
\xymatrix@C=0pt{\cdots \ar[rr]^(0.3){d_1^i} & & U(B^i) \otimes_{U(\Lambda)} U(P_1) \ar[rr]^{d_0^i} \ar[dd]^(0.3){\alpha_1^i} \ar[dr]^{\gamma_1^i} & & U(B^i) \otimes_{U(\Lambda)} U(P_0) \ar[rr] \ar[dd]^(0.3){\alpha_0^i} \ar[dr]^{\gamma_0^i} & & 0 & \\
& \cdots \ar[rr]^(0.3){e_1^i} & & U(C^i) \otimes_{U(\Lambda)} U(P_1) \ar[rr]^(0.4){e_0^i} \ar[dd]_(0.3){\beta_1^i} & & U(C^i) \otimes_{U(\Lambda)} U(P_0) \ar[rr] \ar[dd]_(0.3){\beta_0^i} & & 0 \\
\cdots \ar[rr]^(0.4){d_1^{\prime i}} & & U(B^i \hat{\otimes}_\Lambda P_1) \ar[rr]^(0.6){d_0^{\prime i}} \ar[dr]^{\delta_1^i} & & U(B^i \hat{\otimes}_\Lambda P_0) \ar[rr] \ar[dr]^{\delta_0^i} & & 0 & \\
& \cdots \ar[rr]^{e_1^{\prime i}} & & U(C^i \hat{\otimes}_\Lambda P_1) \ar[rr]^{e_0^{\prime i}} & & U(C^i \hat{\otimes}_\Lambda P_0) \ar[rr] & & 0}
\end{xy}}

\noindent where all the squares commute. By Lemma \ref{otimes}, $\alpha_0^i, \ldots, \alpha_n^i, \beta_0^i, \ldots, \beta_n^i$ are isomorphisms. Now apply $\varinjlim$. Since $\otimes_{U(\Lambda)}$ commutes with direct limits, we have a commutative diagram

\scalebox{0.68}{\begin{xy}
\xymatrix@C=0pt{\cdots \ar[rr]^(0.25){\varinjlim d_1^i} & & (\varinjlim U(B^i)) \otimes_{U(\Lambda)} U(P_1) \ar[rr]^{\varinjlim d_0^i} \ar[dd]^(0.3){\varinjlim \alpha_1^i} \ar[dr]^{\varinjlim \gamma_1^i} & & (\varinjlim U(B^i)) \otimes_{U(\Lambda)} U(P_0) \ar[rr] \ar[dd]^(0.3){\varinjlim \alpha_0^i} \ar[dr]^{\varinjlim \gamma_0^i} & & 0 & \\
& \cdots \ar[rr]^(0.2){\varinjlim e_1^i} & & (\varinjlim U(C^i)) \otimes_{U(\Lambda)} U(P_1) \ar[rr]^(0.4){\varinjlim e_0^i} \ar[dd]_(0.3){\varinjlim \beta_1^i} & & (\varinjlim U(C^i)) \otimes_{U(\Lambda)} U(P_0) \ar[rr] \ar[dd]_(0.3){\varinjlim \beta_0^i} & & 0 \\
\cdots \ar[rr]^(0.4){\varinjlim d_1^{\prime i}} & & \varinjlim U(B^i \hat{\otimes}_\Lambda P_1) \ar[rr]^(0.6){\varinjlim d_0^{\prime i}} \ar[dr]^{\varinjlim \delta_1^i} & & \varinjlim U(B^i \hat{\otimes}_\Lambda P_0) \ar[rr] \ar[dr]^{\varinjlim \delta_0^i} & & 0 & \\
& \cdots \ar[rr]^{\varinjlim e_1^{\prime i}} & & \varinjlim U(C^i \hat{\otimes}_\Lambda P_1) \ar[rr]^{\varinjlim e_0^{\prime i}} & & \varinjlim U(C^i \hat{\otimes}_\Lambda P_0) \ar[rr] & & 0 \rlap{.}}
\end{xy}}

\noindent Then as before we have that $\varinjlim \alpha_0^i, \ldots, \varinjlim \alpha_n^i, \varinjlim \beta_0^i, \ldots, \varinjlim \beta_n^i$ are isomorphisms. By hypothesis $\varinjlim U(B^i) = \varinjlim U(C^i)$, so that $\varinjlim \gamma_0^i, \ldots, \varinjlim \gamma_n^i$ are isomorphisms. Hence $\varinjlim \delta_0^i, \ldots, \varinjlim \delta_n^i$ are, and the result follows after taking direct limits over $J$, and then taking homology.

(ii) $\Rightarrow$ (iii) trivial.

(iii) $\Rightarrow$ (i): Induction on $n$. First suppose $n=0$: we want to show $A \in PMod(\Lambda)_0$. Consider the case where each $C^i$ is a direct product of copies of $\Lambda$ indexed by $X$, $\prod_X \Lambda$, for some set $X$ such that there is an injection $\iota: A \rightarrow X$. Note that we could just use the set $A$ itself here, but in Lemma \ref{b-ehom+} below we will make use of the fact that we only need (iii) to hold for some $X$ with an injection $\iota: A \rightarrow X$ to deduce (i), rather than all $X$, as claimed in the statement of the theorem. Now $B \in PMod(\Lambda^{op})^I_1$, so by (i) $\Rightarrow$ (ii) of Lemma \ref{otimes},
\begin{align*}
\varinjlim U^I (B \hat{\otimes}_\Lambda^I A) &= \varinjlim (U^I(B) \otimes_{U(\Lambda)}^I U(A)) \\
&= \varinjlim (U^I(B)) \otimes_{U(\Lambda)} U(A) \\
&= U(\prod_X \Lambda) \otimes_{U(\Lambda)} U(A),
\end{align*}
where $\hat{\otimes}_\Lambda^I$ and $\otimes_{U(\Lambda)}^I$ are the exponent functors of $\hat{\otimes}_\Lambda$ and $\otimes_{U(\Lambda)}$, respectively. By hypothesis,
\begin{align*}
\varinjlim U^I (f\hat{\otimes}_\Lambda^I-): &\varinjlim U^I (B\hat{\otimes}_\Lambda^I A) = U(\prod_X \Lambda) \otimes_{U(\Lambda)} U(A) \\
\rightarrow &\varinjlim U^I (C\hat{\otimes}_\Lambda^I A) = U(\prod_X \Lambda \hat{\otimes}_\Lambda A) = U(\prod_X A)
\end{align*}
is an epimorphism, so there is a $$c \in U(\prod_X \Lambda) \otimes_{U(\Lambda)} U(A)$$ such that $$\varinjlim U^I (f\hat{\otimes}_\Lambda^I-)(c)$$ is the `diagonal' element of $U(\prod_X A)$ whose $\iota(a)$th component is $a$, for each $a \in A$. Now $c$ has the form $$\sum_{k=1}^m (\prod_{x \in X} \lambda_k^x) \otimes a_k$$ for some $\lambda_k^x \in \Lambda$ and $a_k \in A$, so $$\varinjlim U^I (f\hat{\otimes}_\Lambda^I-)(c)$$ has $\iota(a)$th component $$\sum_{k=1}^m \lambda_k^{\iota(a)} a_k = a.$$ So $a_1, \ldots, a_m$ generate $A$.

For $n>0$, suppose (iii) $\Rightarrow$ (i) holds for $n-1$. We get $A$ finitely generated as before, and an exact sequence
\begin{equation*}
0 \rightarrow K \rightarrow F \rightarrow A \rightarrow 0,
\end{equation*}
with $F$ free and finitely generated. Then, using our long exact sequence in the second variable, we get the diagram
\[
\xymatrix@C-7pt{\cdots \ar[r] & \varinjlim U^I \Tor^{\Lambda, I}_n (B,F) \ar[r] \ar[d]^{\cong} & \varinjlim U^I \Tor^{\Lambda, I}_n (B,A) \ar[r] \ar@{->>}[d] & \varinjlim U^I \Tor^{\Lambda, I}_{n-1} (B,K) \ar[d] \\
\cdots \ar[r] & U \Tor^{\Lambda}_n (\prod \Lambda,F) \ar[r] & U \Tor^{\Lambda}_n (\prod \Lambda,A) \ar[r] & U \Tor^{\Lambda}_{n-1} (\prod \Lambda,K) \\
\ar[r] & \varinjlim U^I \Tor^{\Lambda, I}_{n-1} (B,F) \ar[r] \ar[d]^{\cong} & \varinjlim U^I \Tor^{\Lambda, I}_{n-1} (B,A) \ar[r] \ar[d]^{\cong} & \cdots \\
\ar[r] & U \Tor^{\Lambda}_{n-1} (\prod \Lambda,F \ar[r]) & U \Tor^{\Lambda}_{n-1} (\prod \Lambda,A) \ar[r] & \cdots}
\]
whose squares commute; it follows by the five lemma that the map $$\varinjlim U^I \Tor^{\Lambda, I}_m (B,K) \rightarrow U \Tor^{\Lambda}_m (\prod \Lambda,K)$$ is an isomorphism for $m<n-1$, and an epimorphism for $m=n-1$, for all direct products of copies of $\Lambda$. So by hypothesis $K$ is of type $\FP_{n-1}$, so $A$ is of type $\FP_n$.

(i) $\Rightarrow$ (iv): Take a projective resolution $P_\ast$ of each $A$ with $P_0, \ldots P_n$ finitely generated. Then for each $i \in I$ we get a diagram

\scalebox{0.7}{\begin{xy}
\xymatrix@C=0pt{0 \ar[rr]^(0.4){d_1^i} & & \Hom_\Lambda(P_0, B^i) \ar[rr]^{d_0^i} \ar[dd]^(0.3){\alpha_1^i} \ar[dr]^{\gamma_1^i} & & \Hom_\Lambda(P_1, B^i) \ar[rr] \ar[dd]^(0.3){\alpha_0^i} \ar[dr]^{\gamma_0^i} & & \cdots & \\
& 0 \ar[rr]^(0.25){e_1^i} & & \Hom_\Lambda(P_0, C^i) \ar[rr]^(0.4){e_0^i} \ar[dd]_(0.3){\beta_1^i} & & \Hom_\Lambda(P_1, C^i) \ar[rr] \ar[dd]_(0.3){\beta_0^i} & & \cdots \\
0 \ar[rr]^(0.2){d_1^{\prime i}} & & \hom_{U(\Lambda)}(U(P_0), U(B^i)) \ar[rr]^(0.7){d_0^{\prime i}} \ar[dr]^{\delta_1^i} & & \hom_{U(\Lambda)}(U(P_1), U(B^i)) \ar[rr] \ar[dr]^{\delta_0^i} & & \cdots & \\
& 0 \ar[rr]^{e_1^{\prime i}} & & \hom_{U(\Lambda)}(U(P_0), U(C^i)) \ar[rr]^{e_0^{\prime i}} & & \hom_{U(\Lambda)}(U(P_1), U(C^i)) \ar[rr] & & \cdots}
\end{xy}}

\noindent where all the squares commute. By Lemma \ref{hom}, $\alpha_0^i, \ldots, \alpha_n^i, \beta_0^i, \ldots, \beta_n^i$ are isomorphisms. Now apply $\varinjlim$. Since $\hom_{U(\Lambda)}$ commutes with direct limits in the second argument when the first argument is finitely generated and projective (by \cite[Proposition 1.2]{Bieri}), we have a commutative diagram

\scalebox{0.62}{\begin{xy}
\xymatrix@C=0pt{0 \ar[rr]^(0.4){\varinjlim d_1^i} & & \varinjlim \Hom_\Lambda(P_0, B^i) \ar[rr]^{\varinjlim d_0^i} \ar[dd]^(0.3){\varinjlim \alpha_1^i} \ar[dr]^{\varinjlim \gamma_1^i} & & \varinjlim \Hom_\Lambda(P_1, B^i) \ar[rr] \ar[dd]^(0.3){\varinjlim \alpha_0^i} \ar[dr]^{\varinjlim \gamma_0^i} & & \cdots & \\
& 0 \ar[rr]^(0.25){\varinjlim e_1^i} & & \varinjlim \Hom_\Lambda(P_0, C^i) \ar[rr]^(0.4){\varinjlim e_0^i} \ar[dd]_(0.3){\varinjlim \beta_1^i} & & \varinjlim \Hom_\Lambda(P_1, C^i) \ar[rr] \ar[dd]_(0.3){\varinjlim \beta_0^i} & & \cdots \\
0 \ar[rr]^(0.2){\varinjlim d_1^{\prime i}} & & \varinjlim \hom_{U(\Lambda)}(U(P_0), U(B^i)) \ar[rr]^(0.6){\varinjlim d_0^{\prime i}} \ar[dr]^{\varinjlim \delta_1^i} & & \varinjlim \hom_{U(\Lambda)}(U(P_1), U(B^i)) \ar[rr] \ar[dr]^{\varinjlim \delta_0^i} & & \cdots & \\
& 0 \ar[rr]^{\varinjlim e_1^{\prime i}} & & \varinjlim \hom_{U(\Lambda)}(U(P_0), U(C^i)) \ar[rr]^{\varinjlim e_0^{\prime i}} & & \varinjlim \hom_{U(\Lambda)}(U(P_1), U(C^i)) \ar[rr] & & \cdots \rlap{.}}
\end{xy}}

\noindent By hypothesis $\varinjlim U(B^i) = \varinjlim U(C^i)$, and so $\varinjlim \alpha_0^i, \ldots, \varinjlim \alpha_n^i$, $\varinjlim \beta_0^i, \ldots, \varinjlim \beta_n^i$ and $\varinjlim \delta_0^i, \ldots, \varinjlim \delta_n^i$ are all isomorphisms. It follows that $\varinjlim \gamma_0^i, \ldots, \varinjlim \gamma_n^i$ are, and the result follows after taking cohomology.

(iv) $\Rightarrow$ (v) trivial.

(v) $\Rightarrow$ (i): Induction on $n$. First suppose $n=0$: we want to show $A \in PMod(\Lambda)_0$. Consider the case where $B$ is the direct system \{$A/A'$\}, where $A'$ ranges over the finitely generated submodules of $A$, with the natural projection maps between them. We claim that $\varinjlim A/A'= 0$. For this, we need to show that for all $x \in A$, there is some $A'$ such that the image of $x$ under the projection $A \xrightarrow{\pi} A/A'$ is $0$. So take $A'$ to be the submodule of $A$ generated by $x$, and we are done. Hence $$\varinjlim \Ext_\Lambda^0(A,A/A') = \varinjlim \Hom_\Lambda(A, A/A') = 0;$$ in particular, there is some $A'$ for which the projection $$A \xrightarrow{\pi} A/A'$$ is $0$. So $A=A'$ is finitely generated.

For $n>0$, suppose (v) $\Rightarrow$ (i) holds for $n-1$. We get $A$ finitely generated as before, and an exact sequence
\begin{equation*}
0 \rightarrow K \rightarrow F \rightarrow A \rightarrow 0,
\end{equation*}
with $F$ free and finitely generated. Then, using our long exact sequence in the first variable, it follows that $$\varinjlim \Ext_\Lambda^m(K,B^i)=0$$ for $m \leq n-1$, whenever $\varinjlim B^i = 0$. So by hypothesis $K$ is of type $\FP_{n-1}$, so $A$ is of type $\FP_n$.
\end{proof}

In fact the proof shows slightly more. Given $A \in PMod(\Lambda)_{n-1}$, $n \geq 0$, pick an exact sequence
\begin{equation*}
0 \rightarrow M \rightarrow P_{n-1} \rightarrow \cdots \rightarrow P_0 \rightarrow A \rightarrow 0
\end{equation*}
with $P_0, \ldots, P_{n-1}$ finitely generated and projective, and let $X$ be a set such that there is an injection $\iota: M \rightarrow X$.

\begin{lem}
\label{b-ehom+}
Let $I$ be a directed poset, let $C \in PMod(\Lambda^{op})^I$ have $\prod_X \Lambda$ for all its components with identity maps between them, let $B \in PMod(\Lambda^{op})^I_1$ such that $$\varinjlim U^I(B) \rightarrow U(\prod_X \Lambda)$$ is an isomorphism, with $B \rightarrow C$ given by the canonical map on each component. Then $A \in PMod(\Lambda)_n$ if and only if $$\varinjlim U^I \Tor^{\Lambda,I}_{n-1} (B,A) \rightarrow U \Tor^\Lambda_{n-1}(\prod_X \Lambda,A)$$ is an isomorphism and $$\varinjlim U^I \Tor^{\Lambda,I}_n (B,A) \rightarrow U \Tor^\Lambda_n(\prod_X \Lambda,A)$$ is an epimorphism.
\end{lem}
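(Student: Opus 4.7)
The forward direction is Theorem \ref{b-ehom} (i) $\Rightarrow$ (ii) applied to the given $B \to C$, for which $\varinjlim U^I(B) \to \varinjlim U^I(C) = U(\prod_X \Lambda)$ is the postulated isomorphism. For $n = 0$ the backward direction is exactly the base case of Theorem \ref{b-ehom} (iii) $\Rightarrow$ (i) applied to $A = M$, so from now on I assume $n \geq 1$.

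Since $P_0, \ldots, P_{n-1}$ are finitely generated projective, $A \in PMod(\Lambda)_n$ is equivalent to $M$ being finitely generated. I plan to deduce the latter from Theorem \ref{b-ehom} (iii) $\Rightarrow$ (i) at $n = 0$ for $M$, exploiting the remark in its proof that one need only verify the condition for a single $X$ admitting an injection $M \hookrightarrow X$ and a single $B$ with each $B^i$ finitely presented and $\varinjlim U^I(B) \cong U(\prod_X \Lambda)$. Concretely, the task reduces to showing
\[
\varinjlim U^I (B^i \hat{\otimes}_\Lambda M) \longrightarrow U(\textstyle\prod_X \Lambda \hat{\otimes}_\Lambda M)
\]
is an epimorphism for the $B$ supplied by the hypothesis.

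Decompose the partial resolution into short exact sequences $0 \to K_k \to P_k \to K_{k-1} \to 0$ for $0 \leq k \leq n-2$ (with $K_{-1} := A$) and $0 \to M \to P_{n-1} \to K_{n-2} \to 0$. Applying $N \hat{\otimes}_\Lambda -$ to the last of these, the vanishing $\Tor^\Lambda_1(N, P_{n-1}) = 0$ together with the iterated dimension-shift $\Tor^\Lambda_1(N, K_{n-2}) \cong \Tor^\Lambda_n(N, A)$ yields the four-term exact sequence
\[
0 \to \Tor^\Lambda_n(N, A) \to N \hat{\otimes}_\Lambda M \to N \hat{\otimes}_\Lambda P_{n-1} \to N \hat{\otimes}_\Lambda K_{n-2} \to 0.
\]
Applying $\varinjlim U^I$ to this for $N = B^i$ and comparing with the analogous sequence for $N = \prod_X \Lambda$ produces a commutative diagram of four-term exact sequences. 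The leftmost column is epi by hypothesis and the $P_{n-1}$-column is iso by Lemma \ref{otimes}(i), together with the commutation of $\otimes_{U(\Lambda)}$ with $\varinjlim$ and the assumption $\varinjlim U^I(B) \cong U(\prod_X \Lambda)$. A standard diagram chase then forces the $M$-column to be an epimorphism, provided the $K_{n-2}$-column is an isomorphism.

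To secure the $K_{n-2}$-column, combine Theorem \ref{b-ehom} (i) $\Rightarrow$ (ii) for $A \in PMod(\Lambda)_{n-1}$ (giving iso on $\Tor_m$-columns for all $m < n-1$) with the $\Tor_{n-1}$-iso hypothesis of the lemma to conclude that the $\Tor^\Lambda_m(-, A)$-column is iso for every $0 \leq m \leq n-1$. Then induct on $j$ from $-1$ up to $n-2$ to prove that the $\hat{\otimes}_\Lambda K_j$-column is iso: the base case $j = -1$ is exactly the $\Tor_0$-iso just established; the inductive step runs the same four-term exact sequence argument for $0 \to K_j \to P_j \to K_{j-1} \to 0$, using Lemma \ref{otimes}(i) on $P_j$, the iso of the $\Tor_{j+1}$-column (which is available since $j+1 \leq n-1$), and the inductive hypothesis on $K_{j-1}$, and then invokes the 5-lemma. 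The main obstacle is the bookkeeping in these repeated 5-lemma chases, and the need to resist applying Lemma \ref{otimes} directly to $M$ or to the intermediate $K_j$, which are in general not even finitely generated until $M$'s finite generation is finally extracted at the very end.
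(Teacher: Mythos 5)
Your proof is correct, and it takes essentially the approach the paper intends when it says ``in fact the proof shows slightly more'': you unroll the induction in the proof of Theorem~\ref{b-ehom} (iii)~$\Rightarrow$~(i), tracking the syzygies $K_j$ of the given resolution and the $\Tor$-columns that are isomorphisms at each stage, and feed the resulting epimorphism on $\hat{\otimes}_\Lambda M$ into the $n=0$ base case with the permitted choice of $X \supseteq \iota(M)$. Your explicit bookkeeping (inducting on $j$ to get the $\hat{\otimes}_\Lambda K_j$-columns iso before the final four-lemma chase) is a fair and careful way to make precise what the paper leaves implicit.
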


\begin{cor}
\label{fp1}
Suppose $A \in PMod(\Lambda)$. Then $$A \text{ is of type } \FP_1 \Leftrightarrow U(C) \otimes_{U(\Lambda)} U(A) \cong U(C \hat{\otimes}_\Lambda A)$$ for all $C \in PMod(\Lambda^{op})$.
\end{cor}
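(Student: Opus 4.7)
The forward direction is immediate from Lemma \ref{otimes}(iii) once I observe that being of type $\FP_1$ forces $A$ to be finitely presented in $PMod(\Lambda)$: an exact sequence $P_1 \to P_0 \to A \to 0$ with $P_0, P_1$ finitely generated projective yields, after comparison via a Schanuel-type argument with a surjection $F \twoheadrightarrow A$ from a finitely generated free module, a presentation $0 \to K \to F \to A \to 0$ with $F$ finitely generated free and $K$ finitely generated.

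For the converse my plan is to verify condition (iii) of Theorem \ref{b-ehom} for $n=1$ and invoke the implication (iii)$\Rightarrow$(i) of that theorem. Fix a set $X$, let $C$ be the constant direct system with components $\prod_X \Lambda$ and identity transition maps, and use Lemma \ref{directsystem} to choose a direct system $B = \{B^i\}$ of finitely presented profinite right $\Lambda$-modules together with compatible maps $B^i \to \prod_X \Lambda$ such that $\varinjlim U^I(B) \to U(\prod_X \Lambda)$ is an isomorphism; this gives the required morphism $B \to C$, and I must check that the induced map on $\Tor_0$ is an isomorphism and that on $\Tor_1$ is an epimorphism.

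For $\Tor_0$, Lemma \ref{otimes}(iii) identifies each $U(B^i \hat{\otimes}_\Lambda A)$ naturally with $U(B^i) \otimes_{U(\Lambda)} U(A)$; passing to $\varinjlim$ and using that the abstract tensor product commutes with direct limits yields
\[
\varinjlim U^I(B \hat{\otimes}_\Lambda^I A) \;\cong\; U\bigl(\textstyle\prod_X \Lambda\bigr) \otimes_{U(\Lambda)} U(A),
\]
which by the standing hypothesis on $A$ is isomorphic to $U(\prod_X \Lambda \hat{\otimes}_\Lambda A) = \varinjlim U^I(C \hat{\otimes}_\Lambda^I A)$, giving the $\Tor_0$ isomorphism. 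The $\Tor_1$ condition will be automatic, because $\Tor_1^\Lambda(\prod_X \Lambda, A) = 0$; this is the main technical point, and follows from the natural isomorphism $\prod_X \Lambda \hat{\otimes}_\Lambda M \cong \prod_X M$ (used implicitly in the proof of Theorem \ref{b-ehom}, and obtained by writing $\prod_X \Lambda = \varprojlim_{X_0} \prod_{X_0} \Lambda$ over finite subsets $X_0 \subseteq X$ and invoking continuity of $\hat{\otimes}_\Lambda$ in each variable) combined with exactness of arbitrary products in $PMod(\Lambda)$, which together show $\prod_X \Lambda$ is flat. Both conditions in Theorem \ref{b-ehom}(iii) being verified, the implication (iii)$\Rightarrow$(i) gives $A \in PMod(\Lambda)_1$.
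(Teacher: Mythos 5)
Your proof is correct and follows essentially the same route as the paper: both directions rest on Lemma \ref{otimes} together with verifying condition (iii) of Theorem \ref{b-ehom} for $n=1$, the key point being that $\Tor^\Lambda_1(\prod_X\Lambda,A)=0$ so the $m=1$ epimorphism is automatic. The only small variations are that the paper simply asserts $\prod_X\Lambda$ is free where you give an independent flatness argument via continuity of $\hat{\otimes}_\Lambda$ and exactness of products, and that you invoke Lemma \ref{directsystem} explicitly to produce a suitable $B$ where the paper leaves this implicit.
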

\begin{proof}
$\Rightarrow$: Lemma \ref{otimes}. $\Leftarrow$: Let $C$ be any product of copies of $\Lambda$, $\prod \Lambda$, which is free, so $$\Tor^{\Lambda}_m (\prod \Lambda,A) = 0$$ for $m \geq 1$. Hence for any direct system $B$ of modules in $PMod(\Lambda^{op})$ and any map $$B \rightarrow C=(\prod \Lambda)_{i \in I}$$ such that $$\varinjlim U^I(B) \rightarrow \varinjlim U^I(C)$$ is an isomorphism, $$\varinjlim U^I \Tor^\Lambda_m(B,A) \rightarrow \varinjlim U^I \Tor^\Lambda_m(C,A)$$ must be an epimorphism. Then our hypothesis gives that $$\varinjlim U^I \Tor^\Lambda_0(B,A) \rightarrow \varinjlim U^I \Tor^\Lambda_0(C,A)$$ is an isomorphism, so $A$ is of type $\FP_1$ by (iii) $\Rightarrow$ (i) of the theorem.
\end{proof}

\begin{rem}
\phantomsection
\label{finpres}
\begin{enumerate}[(a)]
\item Ribes-Zalesskii claim in \cite[Proposition 5.5.3]{R-Z} that $A$ being finitely generated is enough for $$U(B) \otimes_{U(\Lambda)} U(A) \rightarrow U(B \hat{\otimes}_\Lambda A)$$ to be an isomorphism for all $B$. (Their notation is slightly different.) If this were the case, then by Corollary \ref{fp1} every finitely generated $A$ would be of type $\FP_1$, and hence by an inductive argument would be of type $\FP_\infty$ (see Lemma \ref{extend} below). In other words $\Lambda$ would be noetherian, in the sense of \cite{S-W}, for all profinite $\Lambda$. But this isn't true: we will see in Remark \ref{prosoluble}(a) that for a group $G$ in certain classes of profinite groups, including prosoluble groups, if $G$ is infinitely generated then $\hat{\mathbb{Z}}$ is of type $\FP_0$ but not $\FP_1$ considered as a $\hat{\mathbb{Z}} \llbracket G \rrbracket$-module with trivial $G$-action, giving a contradiction.
\item A similar claim to the one in (a) is made by Brumer in \cite[Lemma 2.1(ii)]{Brumer}, where `profinite' is replaced by `pseudocompact'. Since profinite rings and modules are pseudocompact, the argument of (a) shows that Brumer's claim also produces a contradiction.
\end{enumerate}
\end{rem}

\begin{cor}
\label{fpn}
If $1 \leq n < \infty$, the following are equivalent for $A \in PMod(\Lambda)$:
\begin{enumerate}[(i)]
\item $A \in PMod(\Lambda)_n$.
\item If $I$ is a directed poset, $B,C \in PMod(\Lambda^{op})^I$, with a morphism $f: B \rightarrow C$ such that $$\varinjlim U^I (f): \varinjlim U^I (B) \rightarrow \varinjlim U^I (C)$$ is an isomorphism, and each component of $C$ is a product of copies of $\Lambda$ with identity maps between them, then $$\varinjlim U^I (B \hat{\otimes}_\Lambda^I A) \rightarrow U (\prod \Lambda \hat{\otimes}_\Lambda A) = U(\prod A)$$ is an isomorphism and $$\varinjlim U^I \Tor^{\Lambda, I}_m (B,A) = 0$$ for $1 \leq m \leq n-1$.
\item $A \in PMod(\Lambda)_1$ and $$\varinjlim U^I \Tor^{\Lambda, I}_m (B,A) = 0$$ for $1 \leq m \leq n-1$.
\end{enumerate}
\end{cor}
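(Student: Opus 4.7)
All three implications will be reduced to Theorem~\ref{b-ehom}, restricted to the direct systems allowed in part (iii) of that theorem. The key auxiliary observation is that when $C$ is a constant system with each component equal to a product $\prod_X \Lambda$ and identity transitions, one has $\varinjlim U^I \Tor^{\Lambda,I}_m(C,A) = U\Tor^\Lambda_m(\prod_X \Lambda, A)$, and this target vanishes for every $m \geq 1$ as soon as $A$ is finitely generated. To see this, choose a resolution $P_\bullet \to A$ with $P_0, \ldots, P_k$ finitely generated and projective: Lemma~\ref{otimes}(i) identifies $\prod_X \Lambda \,\hat{\otimes}_\Lambda P_j$ with $\prod_X P_j$ in that range, and direct products of surjective inverse systems are exact in $PMod(\Lambda)$, so $\prod_X$ commutes with the relevant homology and kills the middle terms of $\prod_X P_\bullet \to \prod_X A$.

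For (i)$\Rightarrow$(ii), Theorem~\ref{b-ehom}(i)$\Rightarrow$(iii) gives that $\varinjlim U^I \Tor^{\Lambda,I}_m(f)$ is an isomorphism for $m<n$. At $m=0$ this is exactly the required $\varinjlim U^I(B \hat{\otimes}_\Lambda^I A) \cong U(\prod A)$, and for $1 \leq m \leq n-1$ the target vanishes (by the auxiliary observation, since $A$ is finitely generated), forcing the source to vanish as well.

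For (iii)$\Rightarrow$(i), I verify the hypotheses of Theorem~\ref{b-ehom}(iii)$\Rightarrow$(i). Since $A \in PMod(\Lambda)_1$ is finitely generated, the auxiliary observation gives $U\Tor^\Lambda_m(\prod_X \Lambda, A)=0$ for all $m \geq 1$. The $m=0$ isomorphism needed by Theorem~\ref{b-ehom}(iii) follows from $A \in PMod(\Lambda)_1$ via Theorem~\ref{b-ehom}(i)$\Rightarrow$(iii) taken at $n=1$; for $1 \leq m \leq n-1$ both source (by hypothesis) and target (by the observation) vanish, so the map is trivially an isomorphism; and for $m=n$ the target vanishes, making the required epimorphism trivial. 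The remaining implication (ii)$\Rightarrow$(iii) is the easiest: the $\Tor$ vanishing is literally part of (ii), and $A \in PMod(\Lambda)_1$ follows by first deducing that $A$ is finitely generated from the $m=0$ iso together with Theorem~\ref{b-ehom}(iii)$\Rightarrow$(i) at $n=0$, then invoking the same theorem at $n=1$ (where the $m=1$ epimorphism condition becomes vacuous by target-vanishing).

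The only delicate point is the target-vanishing computation for $U\Tor^\Lambda_m(\prod_X \Lambda, A)$ when $A$ is finitely generated; once this ``flatness against finitely generated modules'' is granted, each of the three implications is a bookkeeping exercise against Theorem~\ref{b-ehom}.
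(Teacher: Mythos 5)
Your overall architecture — reduce each implication to Theorem~\ref{b-ehom}(iii) and exploit vanishing of the $\Tor$-target when $C$ is constant with components $\prod_X\Lambda$ — matches the paper's strategy. However, the justification you give for the crucial auxiliary observation is wrong, and the error propagates into the parts of the proof where you only know $A \in PMod(\Lambda)_1$.

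You claim that $U\Tor^\Lambda_m(\prod_X\Lambda, A)=0$ for all $m\geq 1$ ``as soon as $A$ is finitely generated,'' and justify this by resolving $A$ and identifying $\prod_X\Lambda\,\hat{\otimes}_\Lambda P_j$ with $\prod_X P_j$ via Lemma~\ref{otimes}(i). But Lemma~\ref{otimes}(i) only applies to the $P_j$ that are finitely generated and projective, so the identification holds only for $j\leq k$ where $k$ is the length to which the chosen resolution is finitely generated. That gives $\Tor_m(\prod_X\Lambda,A)=0$ only for $1\leq m\leq k-1$: to compute $H_m$ you need the three consecutive terms of the identified complex. If $A$ is merely finitely generated ($k=0$) you obtain nothing, and if $A\in PMod(\Lambda)_1$ ($k=1$) you still obtain nothing in positive degrees. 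This is precisely the regime you need in your (iii)$\Rightarrow$(i) step, where you invoke ``the auxiliary observation gives $U\Tor^\Lambda_m(\prod_X \Lambda, A)=0$ for all $m \geq 1$'' knowing only $A \in PMod(\Lambda)_1$; the same problem appears in (ii)$\Rightarrow$(iii). So as written the argument is circular in exactly the direction that matters.

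The correct reason for the vanishing — and the one the paper uses, see the proof of Corollary~\ref{fp1} — is that $\prod_X\Lambda$ is a \emph{free} profinite $\Lambda^{op}$-module, hence flat, hence $\Tor$-acyclic: $\Tor^\Lambda_m(\prod_X\Lambda, A)=0$ for all $m\geq 1$ and \emph{all} $A\in PMod(\Lambda)$, with no finiteness hypothesis on $A$. You should resolve the first variable (trivially, since it is already projective) rather than the second. With this fix, the rest of your bookkeeping against Theorem~\ref{b-ehom} goes through, and it does align with the paper's proof, which is a compact statement of the same reduction together with Corollary~\ref{fp1} for the (ii)$\Leftrightarrow$(iii) step.
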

\begin{proof}
Use (i) $\Leftrightarrow$ (iii) from Theorem \ref{b-ehom}. Then (iii) from the theorem $\Leftrightarrow$ (ii) because $$U \Tor^\Lambda_m(\prod \Lambda,A) = 0,$$ for all $m>0$, and (ii) $\Leftrightarrow$ (iii) by Corollary \ref{fp1}.
\end{proof}

As in Lemma \ref{b-ehom+}, suppose we have $A \in PMod(\Lambda)_{n-1}$, $n \geq 0$, pick an exact sequence
\begin{equation*}
0 \rightarrow M \rightarrow P_{n-1} \rightarrow \cdots \rightarrow P_0 \rightarrow A \rightarrow 0
\end{equation*}
with $P_0, \ldots, P_{n-1}$ finitely generated and projective, and let $X$ be a set such that there is an injection $\iota: M \rightarrow X$. Let $I$ be a directed poset, let $C \in PMod(\Lambda^{op})^I$ have $\prod_X \Lambda$ for all its components with identity maps between them, let $B \in PMod(\Lambda^{op})^I_1$ such that $$\varinjlim U^I(B) \rightarrow U(\prod_X \Lambda)$$ is an isomorphism, with $B \rightarrow C$ given by the canonical map on each component.

\begin{cor}
\label{fpn+}
Assume in addition that $n \geq 1$. Then $A \in PMod(\Lambda)_n$ if and only if $$\varinjlim U^I \Tor^{\Lambda,I}_{n-1} (B,A) \rightarrow U \Tor^\Lambda_{n-1}(\prod_X \Lambda,A)$$ is an isomorphism. For $n \geq 2$, $A \in PMod(\Lambda)_n$ if and only if $$\varinjlim U^I \Tor^{\Lambda,I}_{n-1} (B,A) = 0.$$
\end{cor}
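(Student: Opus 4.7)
My plan is to deduce both parts of the corollary directly from Lemma \ref{b-ehom+}, with the only additional input being the flatness of $\prod_X \Lambda$ as a profinite $\Lambda^{op}$-module. Recall that Lemma \ref{b-ehom+} characterises $A \in PMod(\Lambda)_n$ by the pair of conditions that the natural map on $\Tor_{n-1}$ is an isomorphism \emph{and} the natural map on $\Tor_n$ is an epimorphism, so the whole task is to see why, under the present hypotheses, the second condition is automatic and why the first simplifies as claimed.

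The first step is to observe that $\prod_X \Lambda$ is a free profinite $\Lambda^{op}$-module, hence flat, so $\Tor^\Lambda_k(\prod_X \Lambda, A) = 0$ for every $k \geq 1$; this is the same vanishing used in the proof of Corollary \ref{fp1}. Since $n \geq 1$, the target $U \Tor^\Lambda_n(\prod_X \Lambda, A)$ of the $\Tor_n$-map is therefore zero, so the epimorphism condition in Lemma \ref{b-ehom+} holds trivially. What remains is precisely the isomorphism condition on $\Tor_{n-1}$, and this is the first claimed equivalence.

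For the strengthening in the case $n \geq 2$, I apply the same vanishing one degree lower: now $n - 1 \geq 1$, so the target $U \Tor^\Lambda_{n-1}(\prod_X \Lambda, A)$ also equals $0$. A map into the zero module is an isomorphism if and only if its source is zero, so the surviving condition from the first part reads exactly $\varinjlim U^I \Tor^{\Lambda, I}_{n-1}(B, A) = 0$, which is the second claimed equivalence. There is no real obstacle here; the corollary is essentially a bookkeeping consequence of Lemma \ref{b-ehom+} together with the flatness of free profinite modules, and the only point that needs mentioning is that the epimorphism clause of Lemma \ref{b-ehom+} is vacuous in this setting.
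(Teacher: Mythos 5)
Your proposal is correct and matches the paper's own proof, which is simply the observation that $U\Tor^\Lambda_m(\prod_X \Lambda, A) = 0$ for $m > 0$, combined with Lemma~\ref{b-ehom+}. You have spelled out the bookkeeping (epimorphism clause vacuous for $n \geq 1$; isomorphism into zero is vanishing for $n \geq 2$) that the paper leaves implicit, but the underlying argument is identical.
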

\begin{proof}
$U \Tor^\Lambda_n(\prod_X \Lambda,A) = 0$, for all $n>0$.
\end{proof}

Now analogues to other results in \cite[Chapter 1.1]{Bieri} follow directly from this.

\begin{cor}
Suppose $A' \rightarrowtail A \twoheadrightarrow A''$ is an exact sequence in $PMod(\Lambda)$. Then:
\begin{enumerate}[(i)]
\item If $A' \in PMod(\Lambda)_{n-1}$ and $A \in PMod(\Lambda)_n$, then $A'' \in PMod(\Lambda)_n$.
\item If $A \in PMod(\Lambda)_{n-1}$ and $A'' \in PMod(\Lambda)_n$, then $A'$ is of type $\FP_{n-1}$.
\item If $A'$ and $A''$ are $\in PMod(\Lambda)_n$ then so is $A$.
\end{enumerate}
\end{cor}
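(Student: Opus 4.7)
The plan is to apply the Bieri-Eckmann criterion (iii) of Theorem \ref{b-ehom} and reduce each of (i)--(iii) to a four-lemma chase of the long exact sequence of $\Tor^{\Lambda,I}_\ast$ in the second argument; this is the profinite analogue of Bieri's \cite[Proposition 1.4]{Bieri}, and runs along the same lines. Fix a directed poset $I$ and a morphism $f\colon B \to C$ in $PMod(\Lambda^{op})^I$, where every component of $C$ is a fixed product $\prod \Lambda$ (with identity maps between them) and where $\varinjlim U^I(f)$ is an isomorphism. Applying $\Tor^{\Lambda,I}_\ast$ in the second argument to $A' \rightarrowtail A \twoheadrightarrow A''$ yields, separately for $B$ and for $C$, long exact sequences; since $\varinjlim U^I$ is exact, they stay exact after applying it, and $f$ turns them into a commutative ladder. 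Writing
\[
\phi_m(M)\colon \varinjlim U^I \Tor^{\Lambda,I}_m(B,M) \to \varinjlim U^I \Tor^{\Lambda,I}_m(C,M)
\]
for the vertical map at $M$ in degree $m$, Theorem \ref{b-ehom}(iii) says that ``$M \in PMod(\Lambda)_k$'' is equivalent to ``$\phi_m(M)$ is an isomorphism for $m<k$ and an epimorphism for $m=k$, for every such $(I,B\to C)$''.

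Each of (i)--(iii) then follows by chasing a suitable four-term window in the ladder with the mono and epi versions of the four-lemma. For (iii), I would apply the mono four-lemma to the window whose columns are $\phi_{m+1}(A''), \phi_m(A'), \phi_m(A), \phi_m(A'')$ to obtain $\phi_m(A)$ mono for $m<n$, and the epi four-lemma to the window $\phi_m(A'), \phi_m(A), \phi_m(A''), \phi_{m-1}(A')$ to obtain $\phi_m(A)$ epi for $m \leq n$; combining these gives $A \in PMod(\Lambda)_n$. For (i) and (ii) the method is identical, with the four-term window centred on the column $\phi_m(A'')$ (for (i)) or $\phi_m(A')$ (for (ii)).

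There is no real obstacle; the only care needed is the index bookkeeping, that is, choosing the right window for each of (i)--(iii) and then checking that the hypotheses on the two ``given'' modules translate into the ``epi, mono, mono'' bound required by the mono four-lemma or the ``epi, epi, mono'' bound required by the epi four-lemma, on the three outer columns in the range of $m$ of interest.
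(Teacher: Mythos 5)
Your proof is correct and matches the paper's own (one-line) justification: both argue from the long exact sequence of $\Tor^{\Lambda,I}_\ast$ in the second variable together with the Bieri-Eckmann criterion of Theorem \ref{b-ehom}, reducing to a four-lemma chase; your index bookkeeping (e.g.\ for (iii), the window $\phi_{m+1}(A''),\phi_m(A'),\phi_m(A),\phi_m(A'')$ for the mono case and $\phi_m(A'),\phi_m(A),\phi_m(A''),\phi_{m-1}(A')$ for the epi case) does check out.
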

\begin{proof}
This follows immediately from the long exact sequences in $\Tor^{\Lambda, I}_\ast$.
\end{proof}

\begin{lem}
\label{extend}
Let $A \in PMod(\Lambda)$ be of type $\FP_n$, $n<\infty$, and let
\begin{equation*}
P_{n-1} \rightarrow \cdots \rightarrow P_1 \rightarrow P_0 \rightarrow A \rightarrow 0
\end{equation*}
be a partial projective resolution with $P_0, \ldots, P_{n-1}$ finitely generated. Then the kernel $\ker(P_{n-1} \rightarrow P_{n-2})$ is finitely generated, so one can extend the resolution to
\begin{equation*}
P_n \rightarrow P_{n-1} \rightarrow \cdots \rightarrow P_1 \rightarrow P_0 \rightarrow A \rightarrow 0,
\end{equation*}
with $P_n$ finitely generated as well.
\end{lem}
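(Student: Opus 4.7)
The plan is to deduce this from the generalized Schanuel lemma, exactly as in the abstract case (cf.\ \cite[Proposition 1.4]{Bieri}). Because $PMod(\Lambda)$ is an abelian category with enough projectives, Schanuel's lemma and its iterated form are valid verbatim here: the standard proof consists of forming successive pullbacks and splitting the resulting short exact sequences using projectivity of the chosen modules, and this argument works in any abelian category with enough projectives.

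Concretely, since $A$ is of type $\FP_n$, I would fix some projective resolution $Q_\ast \twoheadrightarrow A$ in $PMod(\Lambda)$ with $Q_0, \ldots, Q_n$ finitely generated, and set $K := \ker(P_{n-1} \to P_{n-2})$ and $L := \ker(Q_{n-1} \to Q_{n-2})$, under the conventions $P_{-1} = Q_{-1} = A$. Applying the generalized Schanuel lemma to the two length-$n$ partial projective resolutions
$$0 \to K \to P_{n-1} \to \cdots \to P_0 \to A \to 0 \quad \text{and} \quad 0 \to L \to Q_{n-1} \to \cdots \to Q_0 \to A \to 0$$
produces an isomorphism
$$K \oplus Q_{n-1} \oplus P_{n-2} \oplus Q_{n-3} \oplus \cdots \cong L \oplus P_{n-1} \oplus Q_{n-2} \oplus P_{n-3} \oplus \cdots,$$
whose summands alternate between the two resolutions.

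Next, I would observe that the right-hand side is finitely generated: $L$ is the image of $Q_n$ in $Q_{n-1}$, hence a continuous quotient of the finitely generated module $Q_n$, and each $P_i$ and $Q_i$ for $0 \leq i \leq n-1$ is finitely generated by hypothesis. It follows that the left-hand side is finitely generated, and therefore so is $K$, since a direct summand of a finitely generated profinite module is finitely generated. I would finish by choosing a finitely generated free profinite $\Lambda$-module $P_n$ together with a continuous surjection $P_n \twoheadrightarrow K$, and splicing this onto the given partial resolution to obtain the required extension.

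The only substantive point to verify is that Schanuel's lemma really does transport to $PMod(\Lambda)$; this is immediate from the abelian-category proof together with the existence of enough projectives recorded in Section \ref{a-pmod}, so no genuine obstacle arises.
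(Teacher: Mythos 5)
Your proposal is correct and is essentially the paper's own argument: the paper simply cites \cite[Proposition 1.5]{Bieri}, and you have reproduced Bieri's Schanuel-lemma proof and verified that it transports verbatim to $PMod(\Lambda)$, which is abelian with enough projectives (and in which a direct summand of a finitely generated module is finitely generated). No gap; this is exactly the intended route.
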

\begin{proof}
See \cite[Proposition 1.5]{Bieri}.
\end{proof}

\begin{cor}
\label{fpinfty}
Suppose $A \in PMod(\Lambda)$. The following are equivalent:
\begin{enumerate}[(i)]
\item $A \in PMod(\Lambda)_\infty$.
\item If $I$ is a directed poset, $B,C \in PMod(\Lambda^{op})^I$, with a morphism $f: B \rightarrow C$ such that $$\varinjlim U^I (f): \varinjlim U^I (B) \rightarrow \varinjlim U^I (C)$$ is an isomorphism, and each component of $C$ is a product of copies of $\Lambda$ with identity maps between them, then $$\varinjlim U^I (B \hat{\otimes}_\Lambda^I A) \rightarrow U (\prod \Lambda \hat{\otimes}_\Lambda A) = U(\prod A)$$ is an isomorphism and $$\varinjlim U^I \Tor^{\Lambda, I}_m (B,A) = 0$$ for all $m \geq 1$.
\item $A \in PMod(\Lambda)_1$ and $$\varinjlim U^I \Tor^{\Lambda, I}_m (B,A) = 0$$ for all $m \geq 1$.
\item If $I$ is a directed poset, and $B \in PMod(\Lambda)^I$ such that $\varinjlim U^I (B) = 0$, then
\begin{equation*}
\varinjlim U^I \Ext_\Lambda^{m, I} (A,B) = 0
\end{equation*}
for all $m$.
\end{enumerate}
\end{cor}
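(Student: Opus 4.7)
The plan is to deduce everything from the finite-$n$ statements already established, using the observation that $A \in PMod(\Lambda)_\infty$ is equivalent to $A \in PMod(\Lambda)_n$ for every finite $n$. This equivalence is a direct consequence of Lemma~\ref{extend}: if $A$ is of type $\FP_n$ for each $n$, one builds a finitely generated projective resolution step by step, because at each stage the relevant kernel is finitely generated.

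Having made that reduction, the equivalences (i) $\Leftrightarrow$ (ii) $\Leftrightarrow$ (iii) fall out of Corollary~\ref{fpn} applied at every finite $n$. Indeed, condition (ii) of the corollary is precisely the conjunction over all $n$ of condition (ii) of Corollary~\ref{fpn}, and likewise for (iii); so these two characterisations of $\FP_n$ for all finite $n$ give the $\FP_\infty$ characterisations after quantifying over $m \geq 1$.

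For (i) $\Rightarrow$ (iv), I would apply the implication (i) $\Rightarrow$ (iv) of Theorem~\ref{b-ehom}, taking $C$ to be the constant zero system and $f: B \to C$ the zero map. Since $\varinjlim U^I(B) = 0$, the hypothesis of that implication is satisfied; the conclusion is that $\varinjlim U^I \Ext_\Lambda^{m,I}(A,B) \to 0$ is an isomorphism for $m < n$ and a monomorphism for $m = n$, hence $\varinjlim U^I \Ext_\Lambda^{m,I}(A,B) = 0$ for $m \leq n$. Letting $n \to \infty$ yields (iv).

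The converse (iv) $\Rightarrow$ (i) is the only direction that needs a brief check rather than a pure quotation. For each finite $n$, I want to invoke (v) $\Rightarrow$ (i) of Theorem~\ref{b-ehom}. Condition (v) of the theorem only asks for the existence of \emph{some} finitely presented direct system $B$ with $\varinjlim U^I(B) = 0$ for which $\varinjlim U^I \Ext_\Lambda^{m,I}(A,B)$ vanishes in the appropriate range. But (iv) of the corollary provides such vanishing for \emph{every} directed system $B \in PMod(\Lambda)^I$ with $\varinjlim U^I(B) = 0$, in particular for the specific finitely presented system used in the inductive proof of (v) $\Rightarrow$ (i) in the theorem (the system $\{A/A'\}$ together with its successive kernel-analogues). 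Thus (v) of the theorem holds for every $n$, so $A \in PMod(\Lambda)_n$ for every $n$, and by the opening reduction $A \in PMod(\Lambda)_\infty$. The only mild subtlety is this quantifier-matching step, but since (iv) of the corollary is phrased with a universal quantifier over $B$, no work is needed.
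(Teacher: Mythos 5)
Your proof is correct and takes essentially the same route as the paper's own (very terse) proof: reduce $\FP_\infty$ to $\FP_n$ for all finite $n$ via Lemma~\ref{extend}, quote Corollary~\ref{fpn} for the $\Tor$ conditions, and quote Theorem~\ref{b-ehom} (with $C=0$) in both directions for the $\Ext$ condition. The only difference is that you spell out the quantifier-matching in (iv) $\Rightarrow$ (i), which the paper leaves implicit.
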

\begin{proof}
(i) $\Rightarrow$ (ii) $\Rightarrow$ (iii) follows immediately from Corollary \ref{fpn}; for (iii) $\Rightarrow$ (i), Corollary \ref{fpn} shows that $A \in PMod(\Lambda)_n$, for all $n<\infty$, and then Lemma \ref{extend} allows us to construct the required projective resolution of $A$. (i) $\Rightarrow$ (iv) follows from Theorem \ref{b-ehom}, which also shows that (iv) $\Rightarrow$ $A \in PMod(\Lambda)_n$, for all $n<\infty$, and then Lemma \ref{extend} tells us that this implies (i).
\end{proof}

\section{Profinite Group Homology and Cohomology over Direct Systems}
\label{groupdspm}

Let $R$ be a commutative profinite ring and $G$ a profinite group. See \cite[Chapter 5.3]{R-Z} for the definition of the \emph{complete group algebra} $R \llbracket G \rrbracket $. Then for $I$ a small category, $A \in PMod(R \llbracket G \rrbracket ^{op})^I$, $B \in PMod(R \llbracket G \rrbracket)^I$, we define the \emph{homology groups of $G$ over $R$ with coefficients in $A$} by $$H^{R,I}_n(G,A) = \Tor^{R \llbracket G \rrbracket , I}_n(A,R),$$ and the \emph{cohomology groups with coefficients in $B$} by $$H^{n,I}_R(G,B) = \Ext^{n, I}_{R \llbracket G \rrbracket} (R,B),$$ where $R$ is a left $R \llbracket G \rrbracket $-module via the trivial $G$-action.

If $R$ is of type $\FP_n$ as an $R \llbracket G \rrbracket $-module, we say $G$ is \emph{of type $\FP_n$ over $R$}. Note that $R$ is finitely generated as an $R \llbracket G \rrbracket $-module, so all groups are of type $\FP_0$ over all $R$. Note also that since $R \llbracket \{e\} \rrbracket  = R$, $R$ is free as an $R \llbracket \{e\} \rrbracket $-module, so the trivial group is of type $\FP_\infty$.

Now Theorem \ref{b-ehom} and Corollary \ref{fpn} translate to:

\begin{prop}
\label{groupfpn}
Let $I$ be a directed poset. The following are equivalent for $n \geq 1$:
\begin{enumerate}[(i)]
\item $G$ is of type $\FP_n$ over $R$.
\item Whenever we have $B,C \in PMod(R \llbracket G \rrbracket ^{op})^I$, with a morphism $f: B \rightarrow C$ such that $$\varinjlim U^I (f): \varinjlim U^I (B) \rightarrow \varinjlim U^I (C)$$ is an isomorphism, then $$\varinjlim U^I H^{R,I}_m(G,B) \rightarrow \varinjlim U^I H^{R,I}_m(G,C)$$ are isomorphisms for $m<n$ and an epimorphism for $m=n$.
\item $G$ is of type $\FP_1$, and for all products $\prod \Lambda$ of copies of $R \llbracket G \rrbracket $, when $C$ has as each of its components $\prod \Lambda$, with identity maps between them, for some $B$ with each component finitely presented, $$\varinjlim U^I H^{R,I}_m(G,B) = 0$$ for all $1 \leq m \leq n-1$.
\item Whenever we have $B,C \in PMod(R \llbracket G \rrbracket )^I$, with a morphism $f: B \rightarrow C$ such that $$\varinjlim U^I (f): \varinjlim U^I (B) \rightarrow \varinjlim U^I (C)$$ is an isomorphism, then $$\varinjlim U^I H_R^{m, I} (G,B) \rightarrow \varinjlim U^I H_R^{m, I} (G,C)$$ are isomorphisms for $m<n$ and a monomorphism for $m=n$.
\item When $C$ has $0$ as each of its components, for some $B$ with each component finitely presented, $$\varinjlim U^I H_R^{m, I} (G,B) = 0$$ for $m \leq n$.
\end{enumerate}
\end{prop}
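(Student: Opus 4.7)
The plan is essentially to specialize Theorem \ref{b-ehom} and Corollary \ref{fpn} to the setting $\Lambda = R \llbracket G \rrbracket$ with $A = R$ (the trivial module). Since ``$G$ is of type $\FP_n$ over $R$'' is defined to mean $R \in PMod(R \llbracket G \rrbracket)_n$, and since $H^{R,I}_m(G,-) = \Tor^{R\llbracket G \rrbracket,I}_m(-,R)$ and $H^{m,I}_R(G,-) = \Ext^{m,I}_{R\llbracket G \rrbracket}(R,-)$ by definition, the statements of Theorem \ref{b-ehom}(ii), (iv), (v) and Proposition \ref{groupfpn}(ii), (iv), (v) are literally the same after substituting the definitions.

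First I would verify the implications (i) $\Leftrightarrow$ (ii), (i) $\Leftrightarrow$ (iv), (i) $\Leftrightarrow$ (v) by direct appeal to the corresponding equivalences in Theorem \ref{b-ehom}, noting that $R \llbracket G \rrbracket^{op}$ plays the role of $\Lambda^{op}$ in the homological conditions and $R \llbracket G \rrbracket$ the role of $\Lambda$ in the cohomological conditions. Nothing here requires $n \geq 1$ except that condition (ii) of the proposition is stated only for $m \leq n$ with $n \geq 1$, which matches the statement of the theorem.

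For (i) $\Leftrightarrow$ (iii), I would invoke Corollary \ref{fpn} in the form (i) $\Leftrightarrow$ (iii) there: $R \in PMod(R\llbracket G \rrbracket)_n$ if and only if $R \in PMod(R\llbracket G \rrbracket)_1$ (i.e.\ $G$ is of type $\FP_1$ over $R$) together with the vanishing $\varinjlim U^I \Tor^{R\llbracket G \rrbracket, I}_m(B,R) = 0$ for $1 \leq m \leq n-1$, for direct systems $C$ of products of copies of $R\llbracket G \rrbracket$ with $B$ having finitely presented components and mapping to $C$ with direct-limit-of-underlying-modules an isomorphism. Translating $\Tor$ into $H^{R,I}_m(G,B)$ yields precisely condition (iii) of the proposition.

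There is no real obstacle: the proof is a direct translation, and the only sliver of care needed is in matching the $\FP_1$ hypothesis in (iii) with the ``epimorphism for $m=n$'' clause that is implicitly used via Corollary \ref{fp1} to pass between the two formulations. Once this bookkeeping is confirmed, the result follows at once. I would therefore simply state: ``This is immediate from Theorem \ref{b-ehom} and Corollary \ref{fpn} applied to $\Lambda = R\llbracket G \rrbracket$ and $A = R$, after unwinding the definitions of $H^{R,I}_\ast$ and $H^{\ast,I}_R$.''
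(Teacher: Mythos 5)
Your proposal is correct and is exactly how the paper proceeds: the text introduces this proposition with the single sentence ``Now Theorem \ref{b-ehom} and Corollary \ref{fpn} translate to:'' and gives no further proof. Specializing to $\Lambda = R\llbracket G\rrbracket$, $A = R$, and unwinding $H^{R,I}_m(G,-)=\Tor^{R\llbracket G\rrbracket,I}_m(-,R)$ and $H^{m,I}_R(G,-)=\Ext^{m,I}_{R\llbracket G\rrbracket}(R,-)$ is all that is needed, just as you describe.
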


Similar results hold for $n=\infty$, by Lemma \ref{extend}.

Corollary \ref{fpn+} translates to:

\begin{lem}
\label{groupfpn+}
Suppose $G$ is of type $\FP_{n-1}$, $n \geq 1$, and we have an exact sequence
\begin{equation*}
0 \rightarrow M \rightarrow P_{n-1} \rightarrow \cdots \rightarrow P_0 \rightarrow R \rightarrow 0
\end{equation*}
of profinite left $R \llbracket G \rrbracket $-modules with $P_0, \ldots, P_{n-1}$ finitely generated and projective. Let $I$ be a directed poset, let $C \in PMod(R \llbracket G \rrbracket ^{op})^I$ have $\prod_X R \llbracket G \rrbracket $ for all its components with identity maps between them, for a set $X$ such that there is an injection $\iota: M \rightarrow X$, let $B \in PMod(R \llbracket G \rrbracket ^{op})^I_1$ such that $$\varinjlim U^I(B) \rightarrow U(\prod_X R \llbracket G \rrbracket )$$ is an isomorphism, with $B \rightarrow C$ given by the canonical map on each component. Then $G$ is of type $\FP_n$ if and only if $$\varinjlim U^I H^{R,I}_{n-1} (G,B) \rightarrow U H^R_{n-1}(G,\prod_X R \llbracket G \rrbracket )$$ is an isomorphism.

For $n \geq 2$, $G$ is of type $\FP_n$ if and only if $$\varinjlim U^I H^{R,I}_{n-1} (G,B) = 0.$$
\end{lem}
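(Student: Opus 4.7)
The plan is to apply Corollary~\ref{fpn+} directly, specialising to $\Lambda = R\llbracket G \rrbracket$ and $A = R$ (viewed as a left $R\llbracket G \rrbracket$-module via the trivial $G$-action). Under this specialisation, the hypothesis ``$A \in PMod(\Lambda)_{n-1}$'' becomes ``$G$ is of type $\FP_{n-1}$ over $R$'', the partial projective resolution of $A$ is exactly the given exact sequence $0 \to M \to P_{n-1} \to \cdots \to P_0 \to R \to 0$, and the hypotheses on the directed systems $B \to C$ and on the injection $\iota \colon M \to X$ are word-for-word the same. The conclusion ``$A \in PMod(\Lambda)_n$'' becomes ``$G$ is of type $\FP_n$ over $R$'' by definition.

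Next I would translate the relevant $\Tor$-groups into group (co)homology using the definitions set up at the start of Section~\ref{groupdspm}: one has $\Tor^{R\llbracket G \rrbracket, I}_{n-1}(B, R) = H^{R,I}_{n-1}(G,B)$ and $\Tor^{R\llbracket G \rrbracket}_{n-1}(\prod_X R\llbracket G \rrbracket, R) = H^R_{n-1}(G, \prod_X R\llbracket G \rrbracket)$. Substituting these identifications into the first statement of Corollary~\ref{fpn+} gives the first assertion of the lemma verbatim.

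For the second assertion (the case $n \geq 2$), it suffices to observe that $UH^R_{n-1}(G, \prod_X R\llbracket G \rrbracket) = 0$ whenever $n-1 \geq 1$. This holds because $\prod_X R\llbracket G \rrbracket$ is free as a profinite right $R\llbracket G \rrbracket$-module, following the convention used throughout Section~\ref{dspm} (compare Corollary~\ref{fp1}), so its higher $\Tor$-groups against $R$ vanish. Under this vanishing, the isomorphism condition of the first statement collapses to $\varinjlim U^I H^{R,I}_{n-1}(G,B) = 0$, matching the second statement of Corollary~\ref{fpn+}. There is essentially no obstacle here; the entire content is bookkeeping, unpacking the group-ring specialisation of the general module-theoretic result.
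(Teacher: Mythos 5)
Your proof is correct and is exactly what the paper intends: the paper simply states that ``Corollary~\ref{fpn+} translates to'' Lemma~\ref{groupfpn+}, and your argument spells out that translation with $\Lambda = R\llbracket G\rrbracket$, $A = R$, and the identification $H^{R,I}_m(G,-) = \Tor^{R\llbracket G\rrbracket, I}_m(-,R)$, together with the same vanishing $U\Tor^{R\llbracket G\rrbracket}_m(\prod_X R\llbracket G\rrbracket, R) = 0$ for $m \geq 1$ used in the paper's one-line proof of Corollary~\ref{fpn+}.
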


\begin{lem}
\label{opensbgp}
Suppose $H$ is an open subgroup of $G$. Then $H$ is of type $\FP_n$ over $R$, $n \leq \infty$, if and only if $G$ is. In particular, if $G$ is finite, it is of type $\FP_\infty$ over $R$.
\end{lem}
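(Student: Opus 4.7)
The key observation is that since $G$ is compact and $H$ is open, $G/H$ is finite; write $k = [G:H]$. Taking inverse limits over open normal subgroups $N \leq H$ in the decompositions $R[G/N] \cong \bigoplus_{i=1}^k R[H/N] g_i$ (for $g_i$ a system of coset representatives) shows that $R \llbracket G \rrbracket$ is finitely generated free of rank $k$ as an $R \llbracket H \rrbracket$-module on either side. Consequently the restriction functor $PMod(R \llbracket G \rrbracket) \to PMod(R \llbracket H \rrbracket)$ is exact, preserves finite generation, and preserves finitely generated projectivity (since finitely generated projectives are summands of finitely generated frees, and finitely generated free $R \llbracket G \rrbracket$-modules restrict to finitely generated free $R \llbracket H \rrbracket$-modules of $k$ times the rank).

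My plan is to prove by induction on $n$ the stronger module-theoretic statement: for any $A \in PMod(R \llbracket G \rrbracket)$ and any $n \leq \infty$, $A$ is of type $\FP_n$ over $R \llbracket G \rrbracket$ if and only if $A|_H$ is of type $\FP_n$ over $R \llbracket H \rrbracket$. The lemma follows by taking $A = R$ with trivial $G$-action. The case $n=0$ is immediate from the observations above: if $a_1,\ldots,a_m$ generate $A$ over $R \llbracket G \rrbracket$ then $\{g_i a_j\}$ generates $A$ over $R \llbracket H \rrbracket$, and conversely any $R \llbracket H \rrbracket$-generating set generates over $R \llbracket G \rrbracket$. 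For the inductive step at finite $n \geq 1$, the implication ``$A$ is $\FP_n$ $\Rightarrow$ $A|_H$ is $\FP_n$'' follows by restricting a projective resolution with finitely generated first $n$ terms. For the converse, having $A$ finitely generated over $R \llbracket G \rrbracket$ from the base case, I would pick a short exact sequence $0 \to K \to F \to A \to 0$ with $F$ finitely generated free over $R \llbracket G \rrbracket$, so that $F|_H$ is finitely generated free over $R \llbracket H \rrbracket$. The corollary on $\FP_n$-type in short exact sequences (stated just before Lemma \ref{extend}) applied to the restricted sequence yields $K|_H \in PMod(R \llbracket H \rrbracket)_{n-1}$; by the inductive hypothesis, $K \in PMod(R \llbracket G \rrbracket)_{n-1}$; applying that corollary once more (with $F$ of type $\FP_\infty$) gives $A \in PMod(R \llbracket G \rrbracket)_n$. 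The case $n = \infty$ is then deduced from the finite cases by means of Lemma \ref{extend}.

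For the last assertion, if $G$ is finite then $\{e\}$ is open in $G$, and the trivial group is of type $\FP_\infty$ since $R \llbracket \{e\} \rrbracket = R$ is free of rank one over itself; applying the main statement to $H = \{e\}$ gives $G$ of type $\FP_\infty$. The principal obstacle is recognising that one must strengthen from the trivial module $R$ to arbitrary $A \in PMod(R \llbracket G \rrbracket)$ before beginning the induction, because the syzygy $K$ produced in the inductive step is in general not a trivial $G$-module, so the inductive hypothesis must be available for non-trivial coefficients.
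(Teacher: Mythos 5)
Your proof is correct, but it organises the converse direction differently from the paper. You strengthen the statement to arbitrary $A \in PMod(R\llbracket G\rrbracket)$, run an induction on $n$, and pass through a short exact sequence together with the $\FP_n$-in-extensions corollary; the key reason you give for the strengthening is that the syzygy $K$ is not a trivial module. The paper avoids this strengthening entirely: it takes a partial finitely generated projective $R\llbracket G\rrbracket$-resolution of $R$, restricts it to $R\llbracket H\rrbracket$ (where it stays a finitely generated partial projective resolution, since $R\llbracket G\rrbracket$ is finitely generated free over $R\llbracket H\rrbracket$), invokes Lemma \ref{extend} to see that the final kernel is finitely generated as an $R\llbracket H\rrbracket$-module, observes that this forces finite generation over $R\llbracket G\rrbracket$, and extends the resolution by one step; iterating this handles all $n$. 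In effect the paper uses only the $n=0$ case of your strengthened statement --- finite generation over $R\llbracket H\rrbracket$ and over $R\llbracket G\rrbracket$ coincide --- applied repeatedly to the successive kernels, so no passage to short exact sequences or an inductive hypothesis for general modules is needed. Both arguments hinge on the same facts (finite free base change, restriction preserving finite generation and projectivity), but the paper's packaging is a little leaner; your version buys the cleanly stated module-level equivalence at the cost of an extra layer of induction.
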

\begin{proof}
$H$ open $\Rightarrow$ $H$ is of finite index in $G$. It follows from \cite[Proposition 5.7.1]{R-Z} that $R \llbracket G \rrbracket $ is free and finitely generated as an $R \llbracket H \rrbracket $-module, and hence that a finitely generated projective $R \llbracket G \rrbracket $-module is also a finitely generated projective $R \llbracket H \rrbracket $-module (because projective modules are summands of free ones). So an $R \llbracket G \rrbracket $-projective resolution of $R$, finitely generated up to the $n$th step, shows that $H$ is of type $\FP_n$.

For the converse, suppose $H$ is of type $\FP_n$, and suppose we have a finitely generated partial $R \llbracket G \rrbracket $-projective resolution
\begin{equation*}
P_k \rightarrow \cdots \rightarrow P_0 \rightarrow R \rightarrow 0, \tag{$\ast$}
\end{equation*}
for $k<n$. Then since ($\ast$) is also a finitely generated partial $R \llbracket H \rrbracket $-projective resolution, $\ker(P_k \rightarrow P_{k-1})$ is finitely generated as an $R \llbracket H \rrbracket $-module, by Lemma \ref{extend}. So it is finitely generated as an $R \llbracket G \rrbracket $-module too. So we can extend the $R \llbracket G \rrbracket $-projective resolution to
\begin{equation*}
P_{k+1} \rightarrow P_k \rightarrow \cdots \rightarrow P_0 \rightarrow R \rightarrow 0,
\end{equation*}
with $P_{k+1}$ finitely generated. Iterate this argument to get that $G$ is of type $\FP_n$.
\end{proof}

We now observe that if a group $G$ is of type $\FP_n$ over $\hat{\mathbb{Z}}$, it is of type $\FP_n$ over all profinite $R$ (see \cite[Lemma 6.3.5]{R-Z}). Indeed, given a partial projective resolution
\begin{equation*}
P_n \rightarrow P_{n-1} \rightarrow \cdots \rightarrow P_0 \rightarrow \hat{\mathbb{Z}} \rightarrow 0
\end{equation*}
of $\hat{\mathbb{Z}}$ as a $\hat{\mathbb{Z}}\llbracket G \rrbracket$-module with each $P_k$ finitely generated, apply $-\hat{\otimes}_{\hat{\mathbb{Z}}}R$: this is exact because the resolution is $\hat{\mathbb{Z}}$-split. Trivially $\hat{\mathbb{Z}} \hat{\otimes}_{\hat{\mathbb{Z}}}R \cong R$. Now $\hat{\mathbb{Z}}\llbracket G \rrbracket \hat{\otimes}_{\hat{\mathbb{Z}}}R = R\llbracket G \rrbracket$ by considering inverse limits of finite quotients, and it follows by additivity that each $P_k \hat{\otimes}_{\hat{\mathbb{Z}}}R$ is a finitely generated projective $R\llbracket G \rrbracket$-module, as required.

For a profinite group $G$, we write $d(G)$ for the minimal cardinality of a set of generators of $G$. For a profinite $\hat{\mathbb{Z}} \llbracket G \rrbracket$-module $A$, $d_{\hat{\mathbb{Z}} \llbracket G \rrbracket}(A)$ is the minimal cardinality of a set of generators of $A$ as a $\hat{\mathbb{Z}} \llbracket G \rrbracket$-module. Similarly for abstract groups -- except that we count abstract generators instead of topological generators.

We define the \emph{augmentation ideal} $I_{\hat{\mathbb{Z}}} \llbracket G \rrbracket $ to be the kernel of the \emph{evaluation map}
\begin{equation*}
\varepsilon: \hat{\mathbb{Z}} \llbracket G \rrbracket  \rightarrow \hat{\mathbb{Z}}, g \mapsto 1,
\end{equation*}
and $I_{\mathbb{Z}}[G]$ similarly for abstract groups. In the abstract case, $d(G)$ is finite if and only if $d_{\mathbb{Z}[G]}(I_{\mathbb{Z}}[G])$ is, and more generally groups are of type $\FP_1$ over any ring if and only if they are finitely generated, by \cite[Proposition 2.1]{Bieri}. Similarly pro-$p$ groups are of type $\FP_1$ over $\mathbb{Z}_p$ if and only if they are finitely generated, by \cite[Theorem 7.8.1]{R-Z} and \cite[Proposition 4.2.3]{S-W}. The following proposition shows this is no longer the case for profinite groups.

\begin{prop}
\label{fingen}
Let $G$ be a profinite group. Then the following are equivalent.
\begin{enumerate}[(i)]
\item $G$ is finitely generated.
\item There exists some $d$ such that for all open normal subgroups $K$ of $G$, $$d(G/K) \leq d_{\mathbb{Z}[G/K]}(I_{\mathbb{Z}}[G/K]) + d,$$ and $G$ is of type $\FP_1$ over $\hat{\mathbb{Z}}$.
\end{enumerate}
\end{prop}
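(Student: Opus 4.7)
The plan is to prove the two implications directly, using the standard characterization that $G$ is of type $\FP_1$ over $\hat{\mathbb{Z}}$ if and only if the augmentation ideal $I_{\hat{\mathbb{Z}}}\llbracket G \rrbracket$ is finitely generated as a $\hat{\mathbb{Z}}\llbracket G \rrbracket$-module; this is immediate from the augmentation sequence $0 \to I_{\hat{\mathbb{Z}}}\llbracket G \rrbracket \to \hat{\mathbb{Z}}\llbracket G \rrbracket \to \hat{\mathbb{Z}} \to 0$ combined with finite generation of $\hat{\mathbb{Z}}\llbracket G \rrbracket$ over itself.

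For (i) $\Rightarrow$ (ii), I would take topological generators $g_1, \ldots, g_n$ of $G$ and observe that the $n$ elements $g_i - 1$ generate $I_{\hat{\mathbb{Z}}}\llbracket G \rrbracket$ topologically: by the identity $gh - 1 = g(h - 1) + (g - 1)$, any $g$ in the abstract subgroup $\langle g_1, \ldots, g_n \rangle$ satisfies $g - 1 \in \hat{\mathbb{Z}}\llbracket G \rrbracket \cdot \{g_i - 1\}$, and since this submodule is closed while the abstract subgroup is dense in $G$, the claim follows. Hence $G$ is of type $\FP_1$. For each open normal $K$ we have $d(G/K) \leq n$ and $d_{\mathbb{Z}[G/K]}(I_{\mathbb{Z}}[G/K]) \geq 0$, so the inequality in (ii) holds with $d = n$.

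For (ii) $\Rightarrow$ (i), first use $\FP_1$ to fix $m$ generators of $I_{\hat{\mathbb{Z}}}\llbracket G \rrbracket$. Their images under the canonical surjection $\hat{\mathbb{Z}}\llbracket G \rrbracket \twoheadrightarrow \hat{\mathbb{Z}}[G/K]$ generate $I_{\hat{\mathbb{Z}}}[G/K]$, giving $d_{\hat{\mathbb{Z}}[G/K]}(I_{\hat{\mathbb{Z}}}[G/K]) \leq m$ uniformly in $K$. Since $G/K$ is finite we have $\hat{\mathbb{Z}}[G/K] = \hat{\mathbb{Z}} \otimes_{\mathbb{Z}} \mathbb{Z}[G/K]$ and $I_{\hat{\mathbb{Z}}}[G/K] = \hat{\mathbb{Z}} \otimes_{\mathbb{Z}} I_{\mathbb{Z}}[G/K]$; comparing the minimal numbers of generators of both modules via the common residue fields at maximal two-sided ideals (each of which lies over a prime $p$ and descends to a maximal ideal of $\mathbb{F}_p[G/K]$) shows these numbers agree, so $d_{\mathbb{Z}[G/K]}(I_{\mathbb{Z}}[G/K]) \leq m$. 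Substituting into the inequality in (ii), $d(G/K) \leq m + d$ uniformly in $K$. A standard inverse-limit compactness argument then finishes the proof: the sets $X_K = \{(a_1, \ldots, a_{m+d}) \in G^{m+d} : a_1 K, \ldots, a_{m+d} K \text{ generate } G/K\}$ are non-empty closed subsets of the compact space $G^{m+d}$ and form a filtered family under reverse inclusion of $K$, so their intersection is non-empty, producing $m + d$ topological generators of $G$.

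The main obstacle is the comparison $d_{\mathbb{Z}[F]}(I_{\mathbb{Z}}[F]) = d_{\hat{\mathbb{Z}}[F]}(I_{\hat{\mathbb{Z}}}[F])$ for a finite group $F$, which is what allows us to translate the uniform bound on generators of the profinite augmentation ideals $I_{\hat{\mathbb{Z}}}[G/K]$ into the uniform bound on generators of the abstract augmentation ideals $I_{\mathbb{Z}}[G/K]$ demanded by (ii). The remaining steps are essentially formal consequences of the augmentation-ideal description of $\FP_1$ together with profinite compactness.
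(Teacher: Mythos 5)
Your argument for (i)~$\Rightarrow$~(ii) is correct and actually slightly more direct than the paper's: showing directly that $g_1-1,\ldots,g_n-1$ topologically generate $I_{\hat{\mathbb{Z}}}\llbracket G\rrbracket$ avoids any appeal to the finite-quotient inequality $d(F) \geq d_{\mathbb{Z}[F]}(I_{\mathbb{Z}}[F])$, and taking $d = n$ is fine. The compactness argument you use at the end of (ii)~$\Rightarrow$~(i) to pass from a uniform bound $d(G/K) \leq m+d$ to $d(G) \leq m+d$ is likewise correct; this is precisely the content of the paper's cited fact (a), namely $d(G) = \sup_K d(G/K)$.

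The gap is in the step you yourself flag as the main obstacle. You want $d_{\mathbb{Z}[F]}(I_{\mathbb{Z}}[F]) = d_{\hat{\mathbb{Z}}[F]}(I_{\hat{\mathbb{Z}}}[F])$ for a finite group $F = G/K$, and in particular the inequality $d_{\mathbb{Z}[F]}(I_{\mathbb{Z}}[F]) \leq d_{\hat{\mathbb{Z}}[F]}(I_{\hat{\mathbb{Z}}}[F])$. (The other inequality is trivial since $\hat{\mathbb{Z}}\otimes_{\mathbb{Z}} -$ carries generators to generators.) Your proposed justification, ``comparing the minimal numbers of generators via the common residue fields at maximal two-sided ideals,'' does not establish this. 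The ring $\hat{\mathbb{Z}}[F] \cong \prod_p \mathbb{Z}_p[F]$ is a product of semilocal rings, so Nakayama's lemma does compute $d_{\hat{\mathbb{Z}}[F]}(M)$ from residue fields and gives $d_{\hat{\mathbb{Z}}[F]}(I_{\hat{\mathbb{Z}}}[F]) = \sup_p d_{\mathbb{Z}_p[F]}(I_{\mathbb{Z}_p}[F])$. But $\mathbb{Z}[F]$ is \emph{not} semilocal, and the minimal number of generators of a $\mathbb{Z}[F]$-lattice is not controlled by residue fields alone: a module can be locally generated by $n$ elements at every prime yet require more than $n$ generators globally, the standard example being a non-principal ideal in a ring of integers of class number greater than one. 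In general, for lattices over $\mathbb{Z}$-orders the local-to-global principle for minimal generator numbers carries Forster--Swan/Roiter-type correction terms, and it is a genuine theorem (not a formal Nakayama argument) that for the augmentation ideal these corrections vanish. This is exactly what the paper outsources to Damian's Theorem~2.3, which gives $d_{\hat{\mathbb{Z}}\llbracket G\rrbracket}(I_{\hat{\mathbb{Z}}}\llbracket G\rrbracket) = \sup_K d_{\mathbb{Z}[G/K]}(I_{\mathbb{Z}}[G/K])$. Your overall strategy is sound — you are essentially re-deriving the consequences of that theorem — but the residue-field sketch does not prove the needed comparison, so you should either cite Damian as the paper does or supply the nontrivial local-to-global argument for $I_{\mathbb{Z}}[F]$ (which goes back to Gruenberg's theory of relation modules).
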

\begin{proof}
We start by noting:
\begin{enumerate}[(a)]
\item $d(G) = \sup_K d(G/K)$ by \cite[Lemma 2.5.3]{R-Z};
\item $d_{\hat{\mathbb{Z}} \llbracket G \rrbracket }(I_{\hat{\mathbb{Z}}} \llbracket G \rrbracket ) = \sup_K d_{\mathbb{Z}[G/K]}(I_{\mathbb{Z}}[G/K])$ by \cite[Theorem 2.3]{Damian}.
\end{enumerate}
(i) $\Rightarrow$ (ii): For a finitely generated abstract group $G$,
\begin{equation*}
d(G) \geq d_{\mathbb{Z}[G]}(I_{\mathbb{Z}}[G]). \tag{$\ast$}
\end{equation*}
Indeed, if $G$ is generated by $x_1, \ldots, x_k$, then one can check that $I_{\mathbb{Z}}[G]$ is generated as a $\mathbb{Z}[G]$-module by $x_1-1, \ldots, x_k-1$. Write $G$ as the inverse limit of $\{G/K\}$, where $K$ ranges over the open normal subgroups of $G$. Then applying ($\ast$), for each $K$ $$d(G/K) \geq d_{\mathbb{Z}[G/K]}(I_{\mathbb{Z}}[G/K]);$$ hence $$d(G) = \sup_K d(G/K) \geq \sup_K d_{\mathbb{Z}[G/K]}(I_{\mathbb{Z}}[G/K]) = d_{\hat{\mathbb{Z}} \llbracket G \rrbracket }(I_{\hat{\mathbb{Z}}} \llbracket G \rrbracket ),$$ and hence $G$ is of type $\FP_1$ over $\hat{\mathbb{Z}}$. Now set $d=d(G)$: for each $K$, $$d(G/K) \leq d \leq d_{\mathbb{Z}[G/K]}(I_{\mathbb{Z}}[G/K]) + d.$$
(ii) $\Rightarrow$ (i): First note that by Lemma \ref{extend}, since $G$ is of type $\FP_1$, $d_{\hat{\mathbb{Z}} \llbracket G \rrbracket }(I_{\hat{\mathbb{Z}}} \llbracket G \rrbracket )$ is finite. By (a) and (b), $$d(G) \leq d_{\hat{\mathbb{Z}} \llbracket G \rrbracket }(I_{\hat{\mathbb{Z}}} \llbracket G \rrbracket ) + d,$$ and the result follows.
\end{proof}

\begin{rem}
\phantomsection
\label{prosoluble}
\begin{enumerate}[(a)]
\item When, for example, $G$ is prosoluble or 2-generated, it is known that the condition $$d(G/K) \leq d_{\mathbb{Z}[G/K]}(I_{\mathbb{Z}}[G/K]) + d$$ for all open normal $K$ holds with $d=0$ -- see \cite[Proposition 6.2, Theorem 6.9]{Gruenberg}. Since pro-$p$ groups are pronilpotent, this holds for all pro-$p$ groups. By the Feit-Thompson theorem, it holds for all profinite groups of order coprime to $2$.
\item There are profinite groups $G$ for which the difference between $d(G/K)$ and $d_{\mathbb{Z}[G/K]}(I_{\mathbb{Z}}[G/K])$ is unbounded as $K$ varies. The existence of a group of type $\FP_1$ over $\hat{\mathbb{Z}}$ that is not finitely generated is shown in \cite[Example 2.6]{Damian}.
\item Let $\pi$ be a set of primes. In fact the proof of \cite[Theorem 2.3]{Damian} that $$d_{\hat{\mathbb{Z}} \llbracket G \rrbracket }(I_{\hat{\mathbb{Z}}} \llbracket G \rrbracket ) = \sup_K d_{\mathbb{Z}[G/K]}(I_{\mathbb{Z}}[G/K]),$$ and hence the proof of Proposition \ref{fingen}, go through unchanged if $G$ is a pro-$\pi$ group and we replace $\hat{\mathbb{Z}}$ with $\mathbb{Z}_{\hat{\pi}}$, or more particularly if $G$ is pro-$p$ and we use $\mathbb{Z}_p$. Thus, applying (a), we recover in a new way the fact that pro-$p$ groups are finitely generated if and only if they are of type $\FP_1$ over $\mathbb{Z}_p$.
\end{enumerate}
\end{rem}

\begin{cor}
Suppose $G$ is prosoluble or 2-generated profinite. Then $G$ is of type $\FP_\infty$ over $\hat{\mathbb{Z}}$ if and only if it is finitely generated and whenever $B,C \in PMod(\hat{\mathbb{Z}} \llbracket G \rrbracket ^{op})^I$, with a morphism $f: B \rightarrow C$ such that $$\varinjlim U^I (f): \varinjlim U^I (B) \rightarrow \varinjlim U^I (C)$$ is an isomorphism, and each component of $C$ is a product of copies of $\hat{\mathbb{Z}} \llbracket G \rrbracket $ with identity maps between them, $$\varinjlim U^I H^{R, I}_n(G, B) = 0$$ for all $n \geq 1$.
\end{cor}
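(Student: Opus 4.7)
The plan is to apply Corollary \ref{fpinfty} with $\Lambda = \hat{\mathbb{Z}} \llbracket G \rrbracket$ and $A = \hat{\mathbb{Z}}$, and then translate $\Tor$ into group homology via the definition $H^{R,I}_n(G,B) = \Tor^{R\llbracket G \rrbracket,I}_n(B,R)$. Under this dictionary, condition (iii) of Corollary \ref{fpinfty} says that $\hat{\mathbb{Z}}$ is of type $\FP_1$ as a $\hat{\mathbb{Z}} \llbracket G \rrbracket$-module (that is, $G$ is of type $\FP_1$ over $\hat{\mathbb{Z}}$) together with the vanishing $\varinjlim U^I H^{R,I}_n(G,B) = 0$ for all $n \geq 1$, where $B$ ranges over the systems described in condition (ii) of Corollary \ref{fpinfty}.

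The remaining ingredient is to replace ``$G$ is of type $\FP_1$ over $\hat{\mathbb{Z}}$'' with ``$G$ is finitely generated.'' This is precisely where the prosoluble or $2$-generated hypothesis enters. By Proposition \ref{fingen}, $G$ is finitely generated if and only if $G$ is of type $\FP_1$ over $\hat{\mathbb{Z}}$ and the inequality $d(G/K) \leq d_{\mathbb{Z}[G/K]}(I_{\mathbb{Z}}[G/K]) + d$ holds uniformly in the open normal subgroups $K$ of $G$. By Remark \ref{prosoluble}(a), for prosoluble or $2$-generated $G$, this inequality holds with $d = 0$, so the two finiteness conditions coincide.

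Putting these two observations together, $G$ is of type $\FP_\infty$ over $\hat{\mathbb{Z}}$ if and only if $G$ is finitely generated and the homological vanishing condition of the corollary holds. The only minor subtlety is a notational one: Corollary \ref{fpinfty}(ii) phrases the vanishing in terms of an isomorphism $\varinjlim U^I(f)$ between an arbitrary $B$ and a system $C$ all of whose components are products of copies of $\Lambda$ with identity maps. No genuinely new argument is needed beyond invoking the translation.

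I do not expect a real obstacle here; the work consists in lining up definitions and citing Corollary \ref{fpinfty}, Proposition \ref{fingen}, and Remark \ref{prosoluble}(a). The only point requiring care is making sure the ``finitely presented $B$'' clause implicit in condition (iii) of Corollary \ref{fpinfty} is not silently strengthened in the restatement; since the statement of the corollary allows arbitrary $B$ with $\varinjlim U^I(f)$ an isomorphism (the finitely presented systems supplied by Lemma \ref{directsystem} being a particular instance), the two forms are equivalent by the same argument as in the proof of Theorem \ref{b-ehom}, and no additional hypothesis on $B$ needs to be inserted.
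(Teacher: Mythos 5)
Your proposal is correct and follows essentially the same route as the paper: the paper's proof reads ``Proposition~\ref{groupfpn} and Proposition~\ref{fingen},'' where Proposition~\ref{groupfpn} is precisely the group-homology translation of Theorem~\ref{b-ehom} and Corollary~\ref{fpn} (extended to $n=\infty$ by Lemma~\ref{extend}, which is Corollary~\ref{fpinfty}), and the substitution of ``finitely generated'' for ``type $\FP_1$'' uses Proposition~\ref{fingen} together with Remark~\ref{prosoluble}(a), exactly as you do. The one slip is cosmetic: the ``finitely presented $B$'' clause you worry about belongs to Theorem~\ref{b-ehom}(iii) and Proposition~\ref{groupfpn}(iii), not to Corollary~\ref{fpinfty}(iii), whose quantifier over $B$ simply inherits the ``arbitrary $B$ with $\varinjlim U^I(f)$ an isomorphism'' setup of part (ii) — so there is in fact no mismatch to resolve.
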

\begin{proof}
Proposition \ref{groupfpn} and Proposition \ref{fingen}.
\end{proof}

We have, for $H^{R,I}_\ast$, a Lyndon-Hochschild-Serre spectral sequence for profinite groups.

\begin{thm}
\label{lhs}
Let $G$ be a profinite group, $K$ a closed normal subgroup and suppose $B \in PMod(R \llbracket G \rrbracket ^{op})^I$. Then there exists a spectral sequence $(E^t_{r,s})$ with the property that $$E^2_{r,s} \cong H^{R,I}_r(G/K,H^{R,I}_s(K,B))$$ and $$E^2_{r,s} \Rightarrow H^{R,I}_{r+s}(G,B).$$
\end{thm}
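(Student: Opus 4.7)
The plan is to obtain this as a Grothendieck spectral sequence for the composition of two derived functors, applied componentwise in the directed poset $I$ and then packaged via the exponent machinery of \cite{Myself}.

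First, at the underlying (non-functor-category) level, for a closed normal subgroup $K$ of $G$ we have the standard isomorphism $R \llbracket G \rrbracket \hat{\otimes}_{R \llbracket K \rrbracket} R \cong R \llbracket G/K \rrbracket$ of profinite right $R \llbracket G/K \rrbracket$-modules (see, e.g., \cite{R-Z}). Tensoring on the left with an arbitrary right $R \llbracket G \rrbracket$-module $C$ yields a natural factorization
$$C \hat{\otimes}_{R \llbracket G \rrbracket} R \;\cong\; \bigl(C \hat{\otimes}_{R \llbracket K \rrbracket} R\bigr) \hat{\otimes}_{R \llbracket G/K \rrbracket} R,$$
where $C \hat{\otimes}_{R \llbracket K \rrbracket} R$ carries the residual $G/K$-action. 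The intermediate functor $- \hat{\otimes}_{R \llbracket K \rrbracket} R$ sends a free profinite right $R \llbracket G \rrbracket$-module $\bigoplus_x R \llbracket G \rrbracket$ to the free profinite right $R \llbracket G/K \rrbracket$-module $\bigoplus_x R \llbracket G/K \rrbracket$, and so preserves projectives (as these are summands of frees). The usual Grothendieck spectral sequence theorem then supplies, for each $C \in PMod(R \llbracket G \rrbracket^{op})$, a spectral sequence
$$E^2_{r,s}(C) = H^R_r(G/K, H^R_s(K, C)) \Rightarrow H^R_{r+s}(G, C),$$
natural in $C$, because the construction can be realized via a functorial projective resolution of $C$ (e.g., by applying the free profinite module functor iteratively).

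Next I would promote this to the functor category $PMod(R)^I$. Given $B \in PMod(R \llbracket G \rrbracket^{op})^I$, apply the construction componentwise at each $B(i)$; naturality in $C$ guarantees that the transition maps $B(i) \to B(j)$ induce morphisms of spectral sequences, so the exponent formalism of \cite{Myself} assembles the result into a spectral sequence in $PMod(R)^I$ whose $E^2$ page and abutment agree with the exponent homological $\delta$-functors $H^{R,I}_\ast$ defined in this section.

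The main obstacle is organizing the Grothendieck construction so that it is visibly functorial in $B$ in the sense required by the exponent formalism — i.e., so that the differentials, filtrations, and $E^\infty$-terms assemble into objects and morphisms in $PMod(R)^I$. This should be routine once one uses a standard functorial resolution (Cartan--Eilenberg, or a functorial double complex arising from two compatible bar-type resolutions), but it is the only genuine bookkeeping needed beyond the abelian-level spectral sequence.
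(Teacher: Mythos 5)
Your proposal matches the paper's approach: the paper's one-line proof cites the base-level profinite Lyndon--Hochschild--Serre spectral sequence from \cite[Theorem 7.2.4]{R-Z} (which is itself obtained via the Grothendieck spectral sequence for the factorization $- \hat{\otimes}_{R\llbracket G\rrbracket} R \cong (- \hat{\otimes}_{R\llbracket K\rrbracket} R)\hat{\otimes}_{R\llbracket G/K\rrbracket} R$, exactly as you describe) together with \cite[Corollary 2.5]{Myself}, which is precisely the general ``exponent'' result assembling a functorial spectral sequence into one valued in the functor category $PMod(R)^I$. The bookkeeping you flag at the end is what that corollary of the companion paper is designed to handle, so your plan reproduces the intended argument in slightly expanded form.
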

\begin{proof}
\cite[Theorem 7.2.4]{R-Z} and \cite[Corollary 2.5]{Myself}.
\end{proof}

\begin{thm}
\label{extensions}
Let $G$ be a profinite group and $K$ a closed normal subgroup. Suppose $K$ is of type $\FP_m$ over $R$, $m \leq \infty$. Suppose $n \leq \infty$, and let $s=\min\{m,n\}$.
\begin{enumerate}[(i)]
\item If $G$ is of type $\FP_n$ over $R$ then $G/K$ is of type $\FP_s$ over $R$.
\item If $G/K$ is of type $\FP_n$ over $R$ then $G$ is of type $\FP_s$ over $R$.
\end{enumerate}
\end{thm}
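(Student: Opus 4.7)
The plan is to combine the Lyndon--Hochschild--Serre spectral sequence of Theorem~\ref{lhs} with Proposition~\ref{groupfpn}, exploiting the fact that $\varinjlim U^I$ is exact and therefore passes termwise through any first-quadrant spectral sequence while preserving its pages, differentials, and convergence.

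For part~(ii), I would take an arbitrary morphism $f : B \to C$ in $PMod(R\llbracket G\rrbracket^{op})^I$ with $\varinjlim U^I(f)$ an isomorphism and verify Proposition~\ref{groupfpn}(ii) for $G$ of type $\FP_s$. Applied to $K$ (of type $\FP_m$), the criterion yields that $\varinjlim U^I H^{R,I}_q(K,f)$ is an isomorphism for $q < m$ and an epimorphism for $q = m$. For each $q < m$, the morphism $H^{R,I}_q(K,f)$ lives in $PMod(R\llbracket G/K\rrbracket^{op})^I$ with $\varinjlim U^I$ still an iso, so the same criterion applied to $G/K$ (of type $\FP_n$) gives that $\varinjlim U^I H^{R,I}_p(G/K, H^{R,I}_q(K,f))$ is iso for $p < n$ and epi for $p = n$. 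At the corner $q = m$, $p = 0$, right-exactness of the coinvariants functor $H^{R,I}_0(G/K, -)$ together with exactness of $\varinjlim U^I$ preserves the epimorphism. A short case analysis, splitting on whether $m \leq n$ or $n < m$, then confirms that $\varinjlim U^I E^2_{p,q}(f)$ is iso for $p+q < s$ and epi for $p+q = s$; the spectral sequence comparison theorem transfers this to the abutment $\varinjlim U^I H^{R,I}_{p+q}(G,f)$, and Proposition~\ref{groupfpn}(ii) yields the conclusion.

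For part~(i), the natural direction of spectral-sequence comparison does not cooperate, so I would instead construct a small resolution for $G/K$ out of one for $G$. Starting from a partial projective $R\llbracket G\rrbracket$-resolution $P_\bullet \to R$ of length $n$ with finitely generated terms (available by $G$ of type $\FP_n$ and Lemma~\ref{extend}), form the complex $Q_\bullet = R\llbracket G/K\rrbracket \hat{\otimes}_{R\llbracket G\rrbracket} P_\bullet$ of finitely generated projective $R\llbracket G/K\rrbracket$-modules. Using that $R\llbracket G\rrbracket$ is free over $R\llbracket K\rrbracket$ on a system of coset representatives, the restriction of $P_\bullet$ to $R\llbracket K\rrbracket$ is itself a projective resolution of $R$, so $H_q(Q_\bullet) = H^R_q(K,R)$ in the range where $P_\bullet$ is a resolution. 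The $\FP_m$ hypothesis on $K$ then makes these $R\llbracket G/K\rrbracket$-modules finitely generated over $R$, hence over $R\llbracket G/K\rrbracket$, for $q \leq m$. A Schanuel-style splicing argument converts $Q_\bullet$ into a genuine finitely generated partial projective $R\llbracket G/K\rrbracket$-resolution of $R$ of length $s = \min\{m,n\}$, giving $G/K$ of type $\FP_s$.

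The main obstacle is part~(i): spectral-sequence comparison is a one-way street that naturally supports~(ii), transferring information from $E^2$ to the abutment, whereas~(i) demands controlling the ``error terms'' $H^R_q(K,R)$ inside a resolution for $G$ in order to synthesise one for $G/K$. The two technical pieces requiring verification are the freeness of $R\llbracket G\rrbracket$ as an $R\llbracket K\rrbracket$-module for closed normal $K$, and the Schanuel-style splicing that turns $Q_\bullet$ into an honest resolution.
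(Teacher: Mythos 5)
Part (ii) of your proposal is essentially sound: applying the $\FP$-criteria for $K$ and $G/K$ termwise to the $E^2$-page of the Lyndon--Hochschild--Serre spectral sequence after $\varinjlim U^I$, and then invoking a comparison theorem for first-quadrant spectral sequences, does yield the required epi/iso conditions on the abutment for $G$. The paper uses the spectral sequence slightly differently but in the same spirit: taking $C$ with components $\prod_X R\llbracket G\rrbracket$, the rows $1 \leq q < m$ of the $E^2$-page vanish outright after $\varinjlim U^I$ (since $R\llbracket G\rrbracket$ is $R\llbracket K\rrbracket$-free by \cite[Corollary 5.7.2]{R-Z}), so the sequence collapses to an isomorphism $\varinjlim U^I H^{R,I}_r(G,B) \cong \varinjlim U^I H^{R,I}_r(G/K, B\hat{\otimes}^I_{R\llbracket K\rrbracket}R)$ in the relevant range, with $B\hat{\otimes}^I_{R\llbracket K\rrbracket}R$ checked to lie in $PMod(R\llbracket G/K\rrbracket^{op})^I_1$.

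Your part (i) contains a genuine gap and also misses the structural point that makes a separate argument unnecessary. The gap: you assert that the $\FP_m$ hypothesis on $K$ makes $H^R_q(K,R)$ finitely generated over $R$ for $q \leq m$. But $H^R_q(K,R)$ is only a subquotient of a finitely generated $R$-module, so this requires $R$ to be noetherian --- compare Lemma~\ref{fpnfg}, which explicitly invokes that $\mathbb{Z}_{\hat{\mathcal{C}}}$ is a PID for exactly this step --- whereas the theorem is stated for arbitrary commutative profinite $R$. Without that finite generation, the Schanuel-style splicing cannot adjoin finitely generated free modules to kill the homology of $Q_\bullet$, and the resolution-building argument stalls. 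The structural point you missed: your claim that ``spectral-sequence comparison is a one-way street that naturally supports (ii)'' overlooks that Lemma~\ref{groupfpn+} (equivalently Lemma~\ref{b-ehom+} and Corollary~\ref{fpn+}) is an \emph{if-and-only-if} characterization of $\FP_n$. Once the collapse isomorphism above is established --- which requires only the $\FP$-hypothesis on $K$, not on $G$ or $G/K$ --- the criterion of Lemma~\ref{groupfpn+} applied to $G$ becomes literally the same statement as the criterion applied to $G/K$, so $G$ is of type $\FP_n$ if and only if $G/K$ is. Both (i) and (ii) then drop out of a single induction on $n$, avoiding both the resolution-building and the noetherian hypothesis it would impose.
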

\begin{proof}
For simplicity we prove the case $m=\infty$. The proof for $m$ finite is similar.

Since $K$ is of type $\FP_\infty$, by Proposition \ref{groupfpn} we have that, whenever $B,C \in PMod(R \llbracket G \rrbracket ^{op})^I$, with a morphism $f: B \rightarrow C$ such that $$\varinjlim U^I (f): \varinjlim U^I (B) \rightarrow \varinjlim U^I (C)$$ is an isomorphism,
\begin{equation*}
\varinjlim U^I H^{R,I}_n(K, B) \rightarrow \varinjlim U^I H^{R,I}_n(K, C)
\end{equation*}
is an isomorphism for all $n$; hence, when the components of $C$ are products of copies of $R \llbracket G \rrbracket $ with identity maps between them,
\begin{equation*}
\varinjlim U^I H^{R,I}_n(K, B) \rightarrow \varinjlim U^I H^{R,I}_n(K,\prod R \llbracket G \rrbracket ) = U(\prod H^R_n(K, R \llbracket G \rrbracket ))
\end{equation*}
is an isomorphism for all $n$; $R \llbracket G \rrbracket $ is a free $R \llbracket K \rrbracket $-module by \cite[Corollary 5.7.2]{R-Z}, so this is $0$ for $n \geq 1$, and for $n=0$ it gives
\begin{equation*}
\varinjlim U^I(B \hat{\otimes}^I_{R \llbracket K \rrbracket } R) = U(\prod R \llbracket G \rrbracket  \hat{\otimes}_{R \llbracket K \rrbracket } R) = U(\prod R \llbracket G/K \rrbracket )
\end{equation*}
by \cite[Proposition 5.8.1]{R-Z}. So the spectral sequence from Theorem \ref{lhs} collapses to give an isomorphism
\begin{equation*}
H^{R,I}_r(G/K,B \hat{\otimes}^I_{R \llbracket K \rrbracket } R) \cong H^{R,I}_r(G,B).
\tag{$\ast$}
\end{equation*}

By Lemma \ref{extend}, it is enough to prove the theorem for $n < \infty$. We use induction on $n$. Note that $G$ and $G/K$ are always both of type $\FP_0$, so we may assume the theorem holds for $n-1$. Suppose $G$ and $G/K$ are of type $\FP_{n-1}$, and that we have exact sequences
\begin{equation*}
0 \rightarrow M \rightarrow P_{n-1} \rightarrow \cdots \rightarrow P_0 \rightarrow R \rightarrow 0
\end{equation*}
of profinite left $R \llbracket G \rrbracket $-modules with $P_0, \ldots, P_{n-1}$ finitely generated and projective, and
\begin{equation*}
0 \rightarrow M' \rightarrow P'_{n-1} \rightarrow \cdots \rightarrow P'_0 \rightarrow R \rightarrow 0
\end{equation*}
of profinite left $R \llbracket G/K \rrbracket $-modules with $P'_0, \ldots, P'_{n-1}$ finitely generated and projective. Choose a set $X$ such that there are injections $\iota: M \rightarrow X$ and $\iota': M' \rightarrow X$. Let $I$ be a directed poset, let $C \in PMod(R \llbracket G \rrbracket ^{op})^I$ have $\prod_X R \llbracket G \rrbracket $ for all its components with identity maps between them, let $B \in PMod(R \llbracket G \rrbracket ^{op})^I_1$ such that $$\varinjlim U^I(B) \rightarrow U(\prod_X R \llbracket G \rrbracket )$$ is an isomorphism, with $B \rightarrow C$ given by the canonical map on each component. Finally, note that $$B \hat{\otimes}^I_{R \llbracket K \rrbracket } R \in PMod(R \llbracket G/K \rrbracket ^{op})^I_1:$$ for each $B^i$, there is an exact sequence $$F_1 \rightarrow F_0 \rightarrow B^i \rightarrow 0$$ with $F_0$ and $F_1$ free and finitely generated $R \llbracket G \rrbracket $-modules, so by the right exactness of $- \hat{\otimes}^I_{R \llbracket K \rrbracket } R$ there is an exact sequence $$F_1 \hat{\otimes}^I_{R \llbracket K \rrbracket } R \rightarrow F_0 \hat{\otimes}^I_{R \llbracket K \rrbracket } R \rightarrow B^i \hat{\otimes}^I_{R \llbracket K \rrbracket } R \rightarrow 0$$ with $F_0 \hat{\otimes}^I_{R \llbracket K \rrbracket } R$ and $F_1 \hat{\otimes}^I_{R \llbracket K \rrbracket } R$ free and finitely generated $R \llbracket G/K \rrbracket $-modules by \cite[Proposition 5.8.1]{R-Z}. Therefore $G$ is of type $\FP_n$ if and only if $$\varinjlim U^I H^{R,I}_{n-1} (G,B) \rightarrow U H^R_{n-1}(G,\prod_X R \llbracket G \rrbracket )$$ is an isomorphism (by Lemma \ref{groupfpn+}) if and only if $$\varinjlim U^I H^{R,I}_{n-1} (G/K,B \hat{\otimes}^I_{R \llbracket K \rrbracket } R) \rightarrow U H^R_{n-1}(G/K,\prod_X R \llbracket G/K \rrbracket )$$ is an isomorphism (by ($\ast$)) if and only if $G/K$ is of type $\FP_n$ (by Lemma \ref{groupfpn+}).
\end{proof}

Let $\mathcal{C}$ be a non-empty \emph{class} of finite groups, i.e. a collection of groups that is closed under isomorphism. Then we can define pro-$\mathcal{C}$ algebraic structures as profinite ones all of whose finite quotients are in $\mathcal{C}$ -- see \cite{R-Z} for details. Suppose $R$ is a pro-$\mathcal{C}$ ring. Then being of type $\FP_n$ over $R$ as a pro-$\mathcal{C}$ group is exactly the same as being of type $\FP_n$ over $R$ as a profinite group, so working in the pro-$\mathcal{C}$ universe instead of the profinite one gives nothing new. On the other hand, amalgamated free pro-$\mathcal{C}$ products of pro-$\mathcal{C}$ groups are not the same as amalgamated free profinite products of pro-$\mathcal{C}$ groups, and pro-$\mathcal{C}$ $\HNN$-extensions of pro-$\mathcal{C}$ groups are not the same as profinite $\HNN$-extensions of pro-$\mathcal{C}$ groups -- essentially because, in the pro-$\mathcal{C}$ case, we take a pro-$\mathcal{C}$ completion of the abstract amalgamated free product or abstract $\HNN$-extension, rather than taking a profinite completion of them. Thus, by working over a class $\mathcal{C}$, we can achieve more general results.

In the abstract case, Bieri uses his analogous results to give conditions on the $\FP$-type of amalgamated free products and $\HNN$-extensions of groups using the Mayer-Vietoris sequence on their homology. His approach does not entirely translate to the pro-$\mathcal{C}$ setting, but we obtain some partial results.

See \cite[9.2]{R-Z} for the definition of amalgamated free products in the pro-$\mathcal{C}$ case, and \cite[9.4]{R-Z} for $\HNN$-extensions. We say that an amalgamated free pro-$\mathcal{C}$ product $G=G_1 \amalg_H G_2$ is \emph{proper} if the canonical homomorphisms $G_1 \rightarrow G$ and $G_2 \rightarrow G$ are monomorphisms. Similarly, we say that a pro-$\mathcal{C}$ $\HNN$-extension $G=\HNN(H,A,f)$ is \emph{proper} if the canonical homomorphism $H \rightarrow G$ is a monomorphism.

Suppose, for the rest of the section, that $\mathcal{C}$ is closed under taking subgroups, quotients and extensions. For example, $\mathcal{C}$ could be all finite groups, or all finite $p$-groups -- or, for example, all finite soluble $\pi$-groups, where $\pi$ is a set of primes. Suppose $R$ is a pro-$\mathcal{C}$ ring.

\begin{prop}
\label{amalg}
Let $G=G_1 \amalg_H G_2$ be a proper amalgamated free pro-$\mathcal{C}$ product of pro-$\mathcal{C}$ groups. Suppose $B$ is a profinite right $R \llbracket G \rrbracket $-module. Then there is a long exact sequence of profinite $R$-modules
\begin{align*}
\cdots &\rightarrow H^R_{n+1}(G,B) \rightarrow H^R_n(H,B) \rightarrow H^R_n(G_1,B) \oplus H^R_n(G_2,B) \\
&\rightarrow H^R_n(G,B) \rightarrow \cdots \rightarrow H^R_0(G,B) \rightarrow 0,
\end{align*}
which is natural in $B$.
\end{prop}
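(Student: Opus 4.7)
The plan is to run the standard Mayer--Vietoris argument in the profinite setting: extract a long exact sequence in $\Tor$ from a short exact sequence of coefficient modules coming from the profinite Bass--Serre tree attached to the amalgamated free product, and then apply a profinite Shapiro-type identification to rewrite the terms as group homology of the edge and vertex subgroups.

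First I would appeal to the pro-$\mathcal{C}$ theory of group actions on profinite trees developed in \cite{R-Z} to produce a short exact sequence of left $R\llbracket G \rrbracket$-modules
\[
0 \to R\llbracket G/H \rrbracket \to R\llbracket G/G_1 \rrbracket \oplus R\llbracket G/G_2 \rrbracket \to R \to 0,
\]
whose first map sends $gH \mapsto (gG_1, -gG_2)$ and whose second map is the difference of augmentations; the hypothesis that the amalgamation is proper is exactly what ensures injectivity of the leftmost map. Using the canonical isomorphism $R\llbracket G/K \rrbracket \cong R\llbracket G \rrbracket \hat{\otimes}_{R\llbracket K \rrbracket} R$ from \cite[Proposition 5.8.1]{R-Z}, this can be rewritten as
\[
0 \to R\llbracket G \rrbracket \hat{\otimes}_{R\llbracket H \rrbracket} R \to \bigl(R\llbracket G \rrbracket \hat{\otimes}_{R\llbracket G_1 \rrbracket} R\bigr) \oplus \bigl(R\llbracket G \rrbracket \hat{\otimes}_{R\llbracket G_2 \rrbracket} R\bigr) \to R \to 0.
\]

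Applying the long exact sequence of $\Tor^{R\llbracket G \rrbracket}_\ast(B,-)$ in the second variable, available from \cite[Proposition 3.3]{Myself}, produces a long exact sequence in $PMod(R)$ whose terms at $R$ are $H^R_\ast(G,B)$ by definition. To finish I would invoke a profinite Shapiro lemma: for each closed subgroup $K \in \{H, G_1, G_2\}$ there is a natural isomorphism
\[
\Tor^{R\llbracket G \rrbracket}_n\bigl(B,\; R\llbracket G \rrbracket \hat{\otimes}_{R\llbracket K \rrbracket} R\bigr) \cong H^R_n(K, B).
\]
This is proved by choosing a projective resolution $P_\bullet \to R$ in $PMod(R\llbracket K \rrbracket)$, using that $R\llbracket G \rrbracket$ is free (hence flat) as a right $R\llbracket K \rrbracket$-module in the proper pro-$\mathcal{C}$ setting to conclude that $R\llbracket G \rrbracket \hat{\otimes}_{R\llbracket K \rrbracket} P_\bullet$ is a projective resolution in $PMod(R\llbracket G \rrbracket)$, and then noting the canonical isomorphism $B \hat{\otimes}_{R\llbracket G \rrbracket} \bigl(R\llbracket G \rrbracket \hat{\otimes}_{R\llbracket K \rrbracket} P_\bullet\bigr) \cong B \hat{\otimes}_{R\llbracket K \rrbracket} P_\bullet$. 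Substituting these identifications gives the Mayer--Vietoris sequence in the statement; it terminates at $H^R_0(G,B)$ because negative-degree $\Tor$ vanishes, and naturality in $B$ is inherited from the naturality of $\Tor$ in its first variable.

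The main obstacle is the very first step: while a short exact sequence of this form is a classical input over abstract group rings (coming from the cellular chains of the Bass--Serre tree together with the fact that $G$ acts freely on the edge set with stabilisers $H$ and on the vertex set with stabilisers $G_1, G_2$), here it has to be constructed from the pro-$\mathcal{C}$ tree theory of Ribes--Zalesskii, and the properness of the amalgamation is essential for the injectivity of the leftmost map. The remaining two steps are then formal, provided one has the flatness/freeness of $R\llbracket G \rrbracket$ over $R\llbracket K \rrbracket$ for each of the closed subgroups $K$ involved.
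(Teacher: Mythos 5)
The paper's own proof is essentially just a citation: it refers to \cite[Proposition 9.2.13]{R-Z} for the long exact sequence and asserts naturality in $B$ by inspection of the maps. Your proposal reconstructs how that cited proposition is proved: extract the short exact sequence of induced $R\llbracket G\rrbracket$-modules from the profinite Bass--Serre tree of a proper amalgamation, apply the long exact sequence of $\Tor^{R\llbracket G\rrbracket}_\ast(B,-)$, and identify the terms with $H^R_\ast$ of the vertex and edge groups via a profinite Shapiro lemma (using freeness of $R\llbracket G\rrbracket$ over $R\llbracket K\rrbracket$ for closed $K$, which is \cite[Corollary 5.7.2]{R-Z}). This is the standard route, it is correct, and it has the advantage of making the naturality in $B$ transparent, since every step is manifestly natural; the paper's approach is more economical but less self-contained. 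One small correction: your two maps are inconsistent as written. If the first map is $gH \mapsto (gG_1,\,-gG_2)$ then the second map must be the \emph{sum} of the two augmentations (so that $gH \mapsto 1 + (-1) = 0$); alternatively take $gH \mapsto (gG_1,\,gG_2)$ together with the difference of the augmentations. As stated, the composite sends $gH$ to $2$, which is nonzero.
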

\begin{proof}
See \cite[Proposition 9.2.13]{R-Z} for the long exact sequence. Naturality follows by examining the maps involved.
\end{proof}

\begin{prop}
\label{amalg'}
Let $G=G_1 \amalg_H G_2$ be a proper amalgamated free pro-$\mathcal{C}$ product of pro-$\mathcal{C}$ groups. Suppose $B \in PMod(R \llbracket G \rrbracket ^{op})^I$. Then there is a long exact sequence in $PMod(R)^I$
\begin{align*}
\cdots &\rightarrow H^{R,I}_{n+1}(G,B) \rightarrow H^{R,I}_n(H,B) \rightarrow H^{R,I}_n(G_1,B) \oplus H^{R,I}_n(G_2,B) \\
&\rightarrow H^{R,I}_n(G,B) \rightarrow \cdots \rightarrow H^{R,I}_0(G,B) \rightarrow 0,
\end{align*}
which is natural in $B$.
\end{prop}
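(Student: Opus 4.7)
The plan is to obtain the long exact sequence pointwise from Proposition \ref{amalg} and then assemble the pieces into a sequence in the functor category $PMod(R)^I$, using the naturality clause to check compatibility along the morphisms of $I$.

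First I would recall that, by the construction of the exponent derived functors in \cite{Myself}, the functor $H^{R,I}_n(G,-)=\Tor^{R\llbracket G\rrbracket,I}_n(-,R)$ is computed componentwise: for each $i\in I$ one has $H^{R,I}_n(G,B)(i)=H^R_n(G,B^i)$, and a morphism $i\to j$ in $I$ acts as $H^R_n(G,B^i\to B^j)$. The same holds for the subgroups $H,G_1,G_2$ of $G$, since each $B^i$ is naturally a profinite right $R\llbracket G\rrbracket^{op}$-module and hence a module over $R\llbracket H\rrbracket^{op}$, $R\llbracket G_1\rrbracket^{op}$ and $R\llbracket G_2\rrbracket^{op}$ by restriction, and restriction commutes with the exponent construction.

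Next, for each fixed $i\in I$, Proposition \ref{amalg} applied to the profinite module $B^i$ gives a long exact sequence
\begin{align*}
\cdots &\rightarrow H^R_{n+1}(G,B^i) \rightarrow H^R_n(H,B^i) \rightarrow H^R_n(G_1,B^i) \oplus H^R_n(G_2,B^i) \\
&\rightarrow H^R_n(G,B^i) \rightarrow \cdots \rightarrow H^R_0(G,B^i) \rightarrow 0
\end{align*}
in $PMod(R)$. For each morphism $i\to j$ in $I$, the naturality clause of Proposition \ref{amalg}, applied to the map $B^i\to B^j$, produces a commutative ladder between the sequence at $i$ and the sequence at $j$. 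In particular, every arrow (including the connecting homomorphisms) is a natural transformation between the corresponding exponent functors $PMod(R\llbracket G\rrbracket^{op})^I\to PMod(R)^I$, and composition of such transformations along composable morphisms of $I$ is respected.

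Finally, since $PMod(R)^I$ is abelian with exactness tested componentwise, the collection of pointwise exact sequences, together with the natural transformations just described, is a long exact sequence in $PMod(R)^I$. Naturality in $B$ is inherited directly from the naturality in each $B^i$ given by Proposition \ref{amalg}. The only step that requires any genuine care, and which I expect to be the main point to check, is that the connecting homomorphisms assemble into genuine natural transformations, but this is exactly what the naturality in the second variable of Proposition \ref{amalg} asserts, so the proof reduces to citing that proposition and the pointwise characterisation of exactness in $PMod(R)^I$.
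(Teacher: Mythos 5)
Your proof is correct and is essentially the paper's argument spelled out: the paper's proof is the single sentence that the result follows immediately from the naturality of the long exact sequence in Proposition \ref{amalg}, which is exactly the content you elaborate (pointwise sequences, naturality giving the ladder, exactness in $PMod(R)^I$ tested componentwise).
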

\begin{proof}
This follows immediately from the naturality of the long exact sequence in Proposition \ref{amalg}.
\end{proof}

We can now give a result analogous to the first part of \cite[Proposition 2.13 (1)]{Bieri}.

\begin{prop}
Let $G=G_1 \amalg_H G_2$ be a proper amalgamated free pro-$\mathcal{C}$ product of pro-$\mathcal{C}$ groups. If $G_1$ and $G_2$ are of type $\FP_n$ over $R$ and $H$ is of type $\FP_{n-1}$ over $R$ then $G$ is of type $\FP_n$ over $R$.
\end{prop}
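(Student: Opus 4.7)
The plan is to verify criterion (ii) of Proposition \ref{groupfpn}. Given a directed poset $I$ and a morphism $f: B \to C$ in $PMod(R \llbracket G \rrbracket^{op})^I$ with $\varinjlim U^I(f)$ an isomorphism, we must show that $\varinjlim U^I H^{R,I}_m(G, f)$ is an isomorphism for $m < n$ and an epimorphism for $m = n$.

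First I would apply Proposition \ref{amalg'} to $B$ and to $C$ separately; by the naturality asserted there, $f$ induces a commutative ladder of Mayer--Vietoris long exact sequences in $PMod(R)^I$. Applying the exact functor $\varinjlim U^I$ (see Section \ref{dspm}) produces a commutative ladder of long exact sequences of $U(R)$-modules whose terms are $L_m(K, -) := \varinjlim U^I H^{R,I}_m(K, -)$ for $K \in \{G, G_1, G_2, H\}$.

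By hypothesis and Proposition \ref{groupfpn}, the vertical maps in the $G_1$ and $G_2$ columns of this ladder are isomorphisms in degrees $< n$ and epimorphisms in degree $n$, while the $H$ columns are isomorphisms in degrees $< n-1$ and an epimorphism in degree $n-1$. A direct five-lemma diagram chase on the segment
\[
L_{m+1}(G) \to L_m(H) \to L_m(G_1) \oplus L_m(G_2) \to L_m(G) \to L_{m-1}(H) \to L_{m-1}(G_1) \oplus L_{m-1}(G_2)
\]
(with the obvious modifications at the boundary $m = 0$, where the sequence terminates with $L_0(G) \to 0$) then shows that the $L_m(G, -)$ vertical is an isomorphism for $m < n$ and an epimorphism for $m = n$. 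The chase at level $m$ only invokes the $G_i$- and $H$-columns at levels $m$ and $m-1$, and makes no use of the $G$ column at adjacent levels, so no induction on $m$ is required. For $n = \infty$ the conclusion follows by combining the finite cases with Lemma \ref{extend}.

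The main obstacle is really just bookkeeping: one has to check that the ``epi-only'' information available for $H$ in degree $n-1$ and for $G_i$ in degree $n$ lands at precisely the positions needed to complete the surjectivity chase at $m = n$ (namely, the chase requires $L_{n-1}(H, -)$ epi, $L_{n-1}(G_i, -)$ iso, and $L_n(G_i, -)$ epi, each of which the hypotheses supply), and that the boundary case $m = 0$ goes through using the terminating ``$\to 0$'' on the right of the Mayer--Vietoris sequence.
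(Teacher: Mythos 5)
Your proposal is correct and takes essentially the same route as the paper: combine the naturality of the Mayer--Vietoris sequence from Proposition \ref{amalg'} with the exactness of $\varinjlim U^I$ and Proposition \ref{groupfpn}, then chase with the four/five lemma. The paper's proof is terser and phrased in terms of the specific systems $C$ of criterion (iii), whereas you verify criterion (ii) directly for arbitrary $f\colon B \to C$; this is a cosmetic difference, since criterion (iii) for $G$ is exactly what the restricted systems feed into while criterion (ii) must in any case be invoked for $H$, $G_1$, $G_2$ (whose components, after restriction, need not be products of copies of $R\llbracket G_i\rrbracket$). Your bookkeeping of which maps need to be epi, iso, and mono at each degree, including the terminal $\to 0$ in degree $0$, is accurate, and invoking Lemma \ref{extend} for $n=\infty$ is the right closing move.
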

\begin{proof}
Take $C$ as in Proposition \ref{groupfpn} to have as each component a product of copies of $R \llbracket G \rrbracket $, with identity maps between the components. Apply Proposition \ref{groupfpn} to the long exact sequence in Proposition \ref{amalg'}. Then the Five Lemma gives the result, by Proposition \ref{groupfpn}.
\end{proof}

See \cite[Chapter 3.3]{R-Z} for the construction and properties of free pro-$\mathcal{C}$ groups.

\begin{cor}
\label{fgfree}
Finitely generated free pro-$\mathcal{C}$ groups are of type $\FP_\infty$ over all pro-$\mathcal{C}$ rings $R$.
\end{cor}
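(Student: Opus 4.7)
The plan is to construct an explicit finitely generated free resolution of the trivial module $R$ over $R \llbracket F \rrbracket$ of length $1$, where $F$ is the finitely generated free pro-$\mathcal{C}$ group of rank $n$. The key fact I would invoke is the pro-$\mathcal{C}$ analogue of the classical statement that the augmentation ideal of a free group ring is itself free: if $F$ is free pro-$\mathcal{C}$ on $x_1, \dots, x_n$, then $\ker(\varepsilon : R \llbracket F \rrbracket \to R)$ is a free $R \llbracket F \rrbracket$-module with basis $\{x_i - 1 : i = 1, \dots, n\}$. This is standard in the profinite and pro-$p$ cases (see \cite{R-Z}) and transfers to the pro-$\mathcal{C}$ setting since we have assumed that $R$ is a pro-$\mathcal{C}$ ring and that $\mathcal{C}$ is closed under subgroups, quotients and extensions. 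Granting this, the augmentation short exact sequence takes the form
$$0 \rightarrow R \llbracket F \rrbracket^{n} \rightarrow R \llbracket F \rrbracket \xrightarrow{\varepsilon} R \rightarrow 0,$$
a resolution of $R$ by finitely generated free $R \llbracket F \rrbracket$-modules, which witnesses that $F$ is of type $\FP_\infty$ directly.

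An alternative route, relying only on the tools developed in this section, is to induct on $n$ using the previous proposition on amalgamated free products. The free pro-$\mathcal{C}$ group $F_n$ is the proper amalgamated free pro-$\mathcal{C}$ product $F_{n-1} \amalg_{\{e\}} F_1$ over the trivial subgroup (which is of type $\FP_\infty$, as noted at the beginning of the section). The proposition, applied for each finite $k$ and then combined with Lemma \ref{extend} to upgrade from ``$\FP_k$ for every finite $k$'' to $\FP_\infty$, reduces the whole statement to the rank-one case $F_1 \cong \hat{\mathbb{Z}}_{\hat{\mathcal{C}}}$.

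The main (and only substantive) obstacle in either approach is therefore the rank-one statement: that multiplication by $x - 1$ is injective on $R \llbracket F_1 \rrbracket$ and that the principal submodule it generates coincides with the kernel of $\varepsilon$. This is verified by passing to the defining inverse system $R \llbracket F_1 \rrbracket = \varprojlim_{U} R[F_1/U]$ over open subgroups $U$ of $F_1$ and checking the corresponding freeness statement compatibly at each finite level, after which the short exact sequence displayed above is obtained as the inverse limit.
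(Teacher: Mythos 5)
Your second route (induction via unamalgamated free pro-$\mathcal{C}$ products, using the preceding proposition with trivial amalgamated subgroup, then Lemma \ref{extend} to pass to $\FP_\infty$) is exactly what the paper's one-line proof does, and you are right that this reduces everything to the rank-one case. You are also right that the rank-one statement is the real content that needs to be supplied — the paper itself only establishes it afterwards, in Example \ref{procyclic}.

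However, your proposed verification of the rank-one case has a genuine gap. You claim to check that $\ker\varepsilon$ is free on $x-1$ by ``passing to the defining inverse system $R\llbracket F_1 \rrbracket = \varprojlim_U R[F_1/U]$ \ldots and checking the corresponding freeness statement compatibly at each finite level.'' But at every finite level this statement is \emph{false}: if $F_1/U$ is cyclic of order $m>1$, then $x-1$ is a zero divisor in $R[F_1/U]$ (it annihilates the norm element $1 + x + \cdots + x^{m-1}$), and the augmentation ideal of $R[F_1/U]$ is not a free module. Freeness of $\ker\varepsilon$ holds only in the limit, so it cannot be proved levelwise by the naive compatibility argument you describe; the non-injectivity at finite levels does not obviously disappear under $\varprojlim$ without a separate argument. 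The paper sidesteps this entirely in Example \ref{procyclic}: rather than proving freeness, it shows $\ker\varepsilon$ is \emph{projective} by appealing to the fact that torsion-free procyclic groups have cohomological dimension $1$ (via the dimension-shift in the long exact $\Tor$ sequence), and projectivity is enough for $\FP_\infty$. Your first route (the explicit length-one free resolution via freeness of the augmentation ideal of $R\llbracket F\rrbracket$) is a valid and classical alternative, and the freeness fact is indeed in the literature — but to use it you should simply cite it rather than attempt the levelwise proof, which does not work as stated.
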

\begin{proof}
Unamalgamated free pro-$\mathcal{C}$ products are always proper by \cite[Corollary 9.1.4]{R-Z}.
\end{proof}

For proper profinite $\HNN$-extensions of profinite groups, we also have a Mayer-Vietoris sequence which is natural in the second variable -- see \cite[Proposition 9.4.2]{R-Z}. It immediately follows in the same way as for Proposition \ref{amalg'} that we get a long exact sequence over a functor category.

\begin{prop}
\label{hnn}
Let $G=\HNN(H,A,f)$ be a proper pro-$\mathcal{C}$ $\HNN$-extension of pro-$\mathcal{C}$ groups. Suppose $B \in PMod(R \llbracket G \rrbracket ^{op})^I$. Then there is a long exact sequence in $PMod(R)^I$
\begin{align*}
\cdots &\rightarrow H^{R,I}_{n+1}(G,B) \rightarrow H^{R,I}_n(A,B) \rightarrow H^{R,I}_n(H,B) \\
&\rightarrow H^{R,I}_n(G,B) \rightarrow \cdots \rightarrow H^{R,I}_0(G,B) \rightarrow 0,
\end{align*}
which is natural in $B$.
\end{prop}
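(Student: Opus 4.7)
The plan is to mimic the derivation of Proposition \ref{amalg'} from Proposition \ref{amalg}, simply replacing the amalgamated free product Mayer--Vietoris sequence with its $\HNN$-analogue. Concretely, \cite[Proposition 9.4.2]{R-Z} supplies, for each fixed profinite right $R \llbracket G \rrbracket$-module $B^i$, a long exact sequence of profinite $R$-modules
\begin{equation*}
\cdots \rightarrow H^R_{n+1}(G,B^i) \rightarrow H^R_n(A,B^i) \rightarrow H^R_n(H,B^i) \rightarrow H^R_n(G,B^i) \rightarrow \cdots \rightarrow H^R_0(G,B^i) \rightarrow 0,
\end{equation*}
and this sequence is natural in $B^i$: any morphism $B^i \rightarrow B^j$ of profinite $R \llbracket G \rrbracket^{op}$-modules induces a commutative ladder between the corresponding Mayer--Vietoris sequences.

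Given this, I would first assemble the pointwise data into objects of $PMod(R)^I$. For each of the three functor categories in play, the assignments $i \mapsto H^R_n(G,B^i)$, $i \mapsto H^R_n(H,B^i)$ and $i \mapsto H^R_n(A,B^i)$ are precisely the definitions of $H^{R,I}_n(G,B)$, $H^{R,I}_n(H,B)$ and $H^{R,I}_n(A,B)$ as exponent-derived functors in the sense of \cite{Myself}. The naturality of the Mayer--Vietoris sequence in the profinite variable then guarantees that the boundary maps and restriction maps, applied componentwise, define morphisms (natural transformations) in $PMod(R)^I$.

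Next, exactness in $PMod(R)^I$ is a pointwise notion: in the functor category over any abelian category, kernels, cokernels, images, and hence exactness, are computed pointwise (see \cite[\S 1]{Myself}). Since the sequence is exact at each $i \in I$ by \cite[Proposition 9.4.2]{R-Z}, we obtain the asserted long exact sequence in $PMod(R)^I$. Naturality of this sequence in the variable $B$ comes for free from the pointwise naturality statement, applied to an arbitrary morphism $B \to B'$ in $PMod(R \llbracket G \rrbracket^{op})^I$ and evaluated at each $i \in I$.

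There is really no obstacle here; the only thing to note is that the verification that the connecting maps are genuinely natural transformations between the exponent functors $H^{R,I}_n(-,B)$ uses only the componentwise naturality already recorded in \cite[Proposition 9.4.2]{R-Z}, and the rest is the same formal bookkeeping used in Proposition \ref{amalg'}.
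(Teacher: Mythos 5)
Your argument is exactly the one the paper relies on: invoke the naturality-in-$B$ of the Mayer--Vietoris sequence for proper $\HNN$-extensions from \cite[Proposition 9.4.2]{R-Z}, assemble the componentwise sequences into the functor category $PMod(R)^I$, and use that exactness and naturality there are checked pointwise, exactly as in Proposition \ref{amalg'}. The proposal is correct and matches the paper's (implicit) proof.
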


From this, we can get in exactly the same way as for free products with amalgamation a result for $\HNN$-extensions, corresponding to the first part of \cite[Proposition 2.13 (2)]{Bieri}.

\begin{prop}
\label{hnn'}
Let $G=\HNN(H,A,f)$ be a proper pro-$\mathcal{C}$ $\HNN$-extension of pro-$\mathcal{C}$ groups. If $H$ is of type $\FP_n$ over $R$ and $A$ is of type $\FP_{n-1}$ over $R$ then $G$ is of type $\FP_n$ over $R$.
\end{prop}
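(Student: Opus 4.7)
The plan is to mimic the proof of the preceding result for amalgamated free products, now using the long exact Mayer--Vietoris sequence of Proposition \ref{hnn} in place of Proposition \ref{amalg'}. The key homological input is Proposition \ref{groupfpn}: $G$ is of type $\FP_n$ over $R$ precisely when $\varinjlim U^I H^{R,I}_m(G,-)$ sends a morphism $f: B \to C$ of direct systems in $PMod(R\llbracket G \rrbracket^{op})^I$ (with $\varinjlim U^I(f)$ an isomorphism and each component of $C$ a product of copies of $R\llbracket G \rrbracket$ with identity transition maps) to an isomorphism for $m<n$ and to an epimorphism for $m=n$.

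Concretely, fix a directed poset $I$ and $B, C \in PMod(R\llbracket G \rrbracket^{op})^I$ of the form just described, together with $f: B \to C$. Since the extension is proper, the canonical map $H \to G$ is a monomorphism, and likewise $A \to G$ is a monomorphism on the associated subgroup; restricting scalars along the resulting injections $R\llbracket H \rrbracket \hookrightarrow R\llbracket G \rrbracket$ and $R\llbracket A \rrbracket \hookrightarrow R\llbracket G \rrbracket$ lets us regard $B$, $C$ and $f$ as a morphism of directed systems in $PMod(R\llbracket H \rrbracket^{op})^I$ and in $PMod(R\llbracket A \rrbracket^{op})^I$, for which $\varinjlim U^I(f)$ remains an isomorphism. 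Apply Proposition \ref{hnn} to each of $B$ and $C$, use naturality in the second argument to assemble a commutative ladder of two long exact sequences, and then apply the exact functor $\varinjlim U^I$ to obtain a commutative ladder of long exact sequences of abstract $R$-modules.

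The remaining step is a Five Lemma chase. The relevant segment of each ladder reads
\[
H_m(A,-) \to H_m(H,-) \to H_m(G,-) \to H_{m-1}(A,-) \to H_{m-1}(H,-),
\]
with vertical maps induced by $f$. By Proposition \ref{groupfpn} applied to $H$ (type $\FP_n$), the vertical maps on $H_m(H,-)$ are isomorphisms for $m<n$ and epimorphisms for $m=n$; applied to $A$ (type $\FP_{n-1}$), the vertical maps on $H_m(A,-)$ are isomorphisms for $m<n-1$ and epimorphisms for $m=n-1$. For $m<n$ this makes all four flanking verticals isomorphisms (in the boundary case $m=n-1$, $H_m(A,-)$ is only an epimorphism, which still suffices in the leftmost slot of the Five Lemma), so the Five Lemma forces the middle vertical $H_m(G,-)$ to be an isomorphism. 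For $m=n$, the flanking verticals $H_n(H,-)$ and $H_{n-1}(A,-)$ are epimorphisms and $H_{n-1}(H,-)$ is an isomorphism, hence mono, so the epimorphism version of the Five Lemma forces $H_n(G,-)$ to be an epimorphism.

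Invoking Proposition \ref{groupfpn} again then yields that $G$ is of type $\FP_n$ over $R$. The only real substance of the argument is the diagram-chase bookkeeping, which is why the author remarks that the proof proceeds \emph{in exactly the same way} as for proper amalgamated free products; no new technical obstacle arises beyond confirming that the properness hypothesis makes $B$ and $C$ well-behaved as systems of $R\llbracket H \rrbracket$- and $R\llbracket A \rrbracket$-modules.
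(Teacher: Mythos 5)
Your proof is correct and follows exactly the route the paper intends (the paper leaves the argument to the reader with a one-line pointer back to the amalgamated free product case): restrict $B$, $C$, and $f$ to $R\llbracket H\rrbracket$- and $R\llbracket A\rrbracket$-systems, apply Proposition \ref{groupfpn} via the naturality of the Mayer--Vietoris sequence of Proposition \ref{hnn}, take $\varinjlim U^I$, and run the Five Lemma. Your bookkeeping in the boundary cases $m=n-1$ (epi in the leftmost slot) and $m=n$ (the epimorphism version of the Five Lemma) is accurate, and you correctly note that the $(i)\Rightarrow(ii)$ direction of Proposition \ref{groupfpn} applied to $H$ and $A$ imposes no constraint on the shape of $C$, so the restriction of scalars is unproblematic.
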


\section{Applications}
\label{appl}

\begin{example}
\label{procyclic}
We show that torsion-free procyclic groups are of type $\FP_\infty$ over $R$. See \cite[Chapter 2.7]{R-Z} for the results on procyclic groups that will be needed in this paper. Any procyclic group $G$ is finitely generated, so of type $\FP_1$ by Proposition \ref{fingen}. If $G$ is torsion-free, its Sylow $p$-subgroups are all either 0 or isomorphic to $\mathbb{Z}_p$, so (assuming $G \neq 1$) it is well-known that $G$ has cohomological dimension 1 -- see \cite[Theorem 7.3.1, Theorem 7.7.4]{R-Z}. Consider the short exact sequence
\begin{equation*}
0 \rightarrow \ker \varepsilon \rightarrow R \llbracket G \rrbracket  \xrightarrow{\varepsilon} R \rightarrow 0,
\end{equation*}
where $\varepsilon$ is the evaluation map defined earlier. The kernel $\ker \varepsilon$ is finitely generated by Lemma \ref{extend}. We claim that it is projective -- and hence that our exact sequence is a finitely generated projective resolution of $R$, showing $G$ is of type $\FP_\infty$. To see this, let $A$ be any profinite right $R \llbracket G \rrbracket $-module, and consider the long exact sequence
\begin{align*}
\cdots &\rightarrow \Tor^{R \llbracket G \rrbracket }_1(A, R) \rightarrow \Tor^{R \llbracket G \rrbracket }_0(A, \ker \varepsilon) \\
&\rightarrow \Tor^{R \llbracket G \rrbracket }_0(A, R \llbracket G \rrbracket ) \rightarrow \Tor^{R \llbracket G \rrbracket }_0(A, R) \rightarrow 0.
\end{align*}
Since $R \llbracket G \rrbracket $ is free as an $R \llbracket G \rrbracket $-module, we get $$\Tor^{R \llbracket G \rrbracket }_n(A, R \llbracket G \rrbracket )=0$$ for all $n \geq 1$, and so $$\Tor^{R \llbracket G \rrbracket }_n(A, \ker \varepsilon) \cong \Tor^{R \llbracket G \rrbracket }_{n+1}(A, R)$$ for all $n \geq 1$. Now $G$ has cohomological dimension 1, so $$\Tor^{R \llbracket G \rrbracket }_{n+1}(A, R)=H^R_{n+1}(G,A)=0$$ for $n \geq 1$, so $\ker \varepsilon$ is projective.
\end{example}

We can now use this example to construct some more groups of type $\FP_\infty$.

It is known in the abstract case that polycyclic groups are of type $\FP_\infty$ over $\mathbb{Z}$ (\cite[Examples 2.6]{Bieri}). In the profinite case, it has not been known whether poly-procyclic groups are of type $\FP_\infty$ over $\hat{\mathbb{Z}}$. A result was known for pro-$p$ groups: poly-(pro-$p$-cyclic) groups are shown to be of type $\FP_\infty$ over $\mathbb{Z}_p$ in \cite[Corollary 4.2.5]{S-W}. This proof uses that, for a pro-$p$ group $G$, $H_{\mathbb{Z}_p}^n(G, A)$ is finite for all finite $A \in DMod(\mathbb{Z}_p \llbracket G \rrbracket )$ if and only if $G$ is of type $\FP_\infty$ over $\mathbb{Z}_p$. Indeed, one might expect a similar result to be true for profinite $G$ which only have finitely many primes in their order, but an obstruction to using this method for general profinite groups is that there are infinitely many primes, so one cannot build up to these groups from pro-$p$ ones using the spectral sequence finitely many times. Similarly, although we showed directly that torsion-free procyclic groups are of type $\FP_\infty$ over $R$, there are procyclic groups which are not even virtually torsion-free, in contrast to the pro-$p$ case, as for example the group $\prod_{p \text{ prime} }\mathbb{Z}/p\mathbb{Z}$.

We now define a class of profinite groups: the elementary amenable profinite groups. The definition is entirely analogous to the hereditary definition of elementary amenable abstract groups given in \cite{H-L}. Let $\mathscr{X}_0$ be the class containing only the trivial group, and let $\mathscr{X}_1$ be the class of profinite groups which are (finitely generated abelian)-by-finite. Now define $\mathscr{X}_\alpha$ to be the class of groups $G$ which have a normal subgroup $K$ such that $G/K \in \mathscr{X}_1$ and every finitely generated subgroup of $K$ is in $\mathscr{X}_{\alpha-1}$ for $\alpha$ a successor ordinal. Finally, for $\alpha$ a limit define $\mathscr{X}_\alpha = \bigcup_{\beta < \alpha} \mathscr{X}_\beta$. Then $\mathscr{X} = \bigcup_\alpha \mathscr{X}_\alpha$ is the class of elementary amenable profinite groups. For $G \in \mathscr{X}$ we define the \emph{class} of $G$ to be the least $\alpha$ with $G \in \mathscr{X}_\alpha$.

Note that soluble profinite groups are clearly elementary amenable.

\begin{prop}
Suppose $G$ is an elementary amenable profinite group of finite rank. Then $G$ is of type $\FP_\infty$ over any profinite ring $R$.
\end{prop}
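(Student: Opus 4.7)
The plan is a transfinite induction on the class $\alpha$ of $G$. The base $\alpha = 0$ is trivial since $\mathscr{X}_0$ consists only of the trivial group. For $\alpha$ a limit, $G$ lies in $\mathscr{X}_\beta$ for some $\beta < \alpha$ and the inductive hypothesis applies directly. For $\alpha \geq 2$ a successor, $G$ admits a closed normal subgroup $K$ with $G/K \in \mathscr{X}_1$ and every finitely generated closed subgroup of $K$ in $\mathscr{X}_{\alpha - 1}$; since $G$ has finite rank $r$, the closed subgroup $K$ is itself generated by at most $r$ elements, so $K \in \mathscr{X}_{\alpha - 1}$. Both $K$ and $G/K$ inherit finite rank from $G$, so by the inductive hypothesis and the base case $\alpha = 1$ they are of type $\FP_\infty$, and Theorem \ref{extensions}(ii) delivers that $G$ is of type $\FP_\infty$. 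Everything therefore reduces to the case $\alpha = 1$.

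So let $G$ be (finitely generated abelian)-by-finite of finite rank $r$. By Lemma \ref{opensbgp} it suffices to show that the open finite-index abelian normal subgroup $A$ of $G$ is of type $\FP_\infty$. Being pronilpotent, $A$ decomposes as the product of its pro-$p$ Sylows, $A = \prod_p A_p$; each $A_p$ is a finitely generated $\mathbb{Z}_p$-module, hence isomorphic to $\mathbb{Z}_p^{r_p} \oplus T_p$ for some finite abelian $p$-group $T_p$, with both $r_p$ and the number $k_p$ of cyclic summands of $T_p$ bounded by $r$. Reindexing the product gives $A = \prod_{i=1}^{r} P_i \times \prod_{j=1}^{r} Q_j$, where $P_i = \prod_{p : r_p \geq i} \mathbb{Z}_p$ is a torsion-free procyclic group and $Q_j = \prod_{p : k_p \geq j} C_{p,j}$ is a torsion procyclic group (with $C_{p,j}$ the $j$th cyclic summand of $T_p$ in some fixed ordering). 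Since a direct product of two profinite groups is an extension of one by the other, iterating Theorem \ref{extensions} reduces matters to proving the following lemma: every procyclic profinite group is of type $\FP_\infty$.

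To prove this procyclic lemma, write an arbitrary procyclic group $H$ as a continuous quotient $\hat{\mathbb{Z}} \twoheadrightarrow H$ with closed kernel $K$. The Sylow decomposition $\hat{\mathbb{Z}} = \prod_p \mathbb{Z}_p$ forces every closed subgroup to split as $K = \prod_p p^{a_p} \mathbb{Z}_p$ for some $a_p \in \{0, 1, \ldots, \infty\}$, so $K \cong \prod_{p : a_p < \infty} \mathbb{Z}_p$ is itself a torsion-free procyclic group. Example \ref{procyclic} applies to both $\hat{\mathbb{Z}}$ and $K$, giving that each is of type $\FP_\infty$, and Theorem \ref{extensions}(i) then yields $H = \hat{\mathbb{Z}}/K$ of type $\FP_\infty$.

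The main obstacle is precisely this last lemma. The discussion preceding the proposition notes that procyclic groups such as $\prod_p \mathbb{Z}/p\mathbb{Z}$ need not be virtually torsion-free, so Example \ref{procyclic} does not apply to them directly; moreover one cannot build such an $H$ from pro-$p$ pieces via finitely many applications of the Lyndon--Hochschild--Serre spectral sequence, because infinitely many primes may be involved. The resolution is to push the torsion into the kernel: although $H$ may have arbitrary torsion, the kernel in any presentation $\hat{\mathbb{Z}} \twoheadrightarrow H$ is forced to be torsion-free by the product-of-Sylows structure of $\hat{\mathbb{Z}}$, whereupon Theorem \ref{extensions}(i), which propagates $\FP_\infty$ from a group and an $\FP_\infty$ normal subgroup to the quotient, completes the argument.
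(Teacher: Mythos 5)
Your proposal is correct and follows essentially the same route as the paper: reduce via Lemma \ref{opensbgp} and transfinite induction to finitely generated abelian groups, split those into a finite product of procyclic groups, and handle procyclic groups by writing them as quotients of $\hat{\mathbb{Z}}$ by a torsion-free procyclic kernel, so that Example \ref{procyclic} and Theorem \ref{extensions} apply. The only difference is cosmetic: the paper cites \cite[Theorem 2.7.2]{R-Z} for the procyclic-quotient fact and \cite[Proposition 8.2.1(iii)]{Wilson} for the decomposition of finitely generated abelian profinite groups into procyclics, whereas you re-derive both explicitly via the Sylow product decomposition.
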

\begin{proof}
By \cite[Theorem 2.7.2]{R-Z}, every procyclic group is a quotient of $\hat{\mathbb{Z}}$ by a torsion-free procyclic group; $\hat{\mathbb{Z}}$ and torsion-free procyclic groups are of type $\FP_\infty$ by Example \ref{procyclic}. Therefore procyclic groups are of type $\FP_\infty$ by Theorem \ref{extensions}, and finitely generated abelian groups are a finite direct sum of procyclic groups by \cite[Proposition 8.2.1(iii)]{Wilson}, so we get that finitely generated abelian groups are of type $\FP_\infty$ by applying Theorem \ref{extensions} finitely many times.

Now use induction on the class of $G$. If $G \in \mathscr{X}_1$, take a finite index abelian $H \leq G$. We have shown $H$ is of type $\FP_\infty$, so $G$ is too by Lemma \ref{opensbgp}. The case where $G$ has class $\alpha$ is trivial for $\alpha$ a limit, so suppose $\alpha$ is a successor. Choose some $K \lhd G$ such that every finitely generated subgroup of $K$ is in $\mathscr{X}_{\alpha-1}$ and $G/K$ is in $\mathscr{X}_1$. Since $G$ is of finite rank, $K$ is finitely generated, so it is in $\mathscr{X}_{\alpha-1}$. By the inductive hypothesis we get that $K$ is of type $\FP_\infty$, and $G/K$ is too so $G$ is by Theorem \ref{extensions}.
\end{proof}

We spend the rest of Section \ref{appl} constructing pro-$\mathcal{C}$ groups of type $\FP_n$ but not of type $\FP_{n+1}$ over $\mathbb{Z}_{\hat{\mathcal{C}}}$ for every $n< \infty$, for $C$ closed under subgroups, quotients and extensions, as before. King in \cite[Theorem F]{King} gives pro-$p$ groups of type $\FP_n$ but not of type $\FP_{n+1}$ over $\mathbb{Z}_p$, but as far as we know the case with $\mathbb{Z}_{\hat{\mathcal{C}}}$ has not been done before for any other class $\mathcal{C}$. Our construction is analogous to \cite[Proposition 2.14]{Bieri}.

Given a profinite space $X$, we can define the \emph{free pro-$\mathcal{C}$ group on $X$}, $F_{\mathcal{C}}(X)$, together with a canonical continuous map $\iota: X \rightarrow F_{\mathcal{C}}(X)$, by the following universal property: for any pro-$\mathcal{C}$ group $G$ and continuous $\phi: X \rightarrow G$, there is a unique continuous homomorphism $\bar{\phi}: F_{\mathcal{C}}(X) \rightarrow G$ such that $\phi = \bar{\phi} \iota$. For a class $C$ closed under subgroups, quotients and extensions, $F_{\mathcal{C}}(X)$ exists for all $X$ by \cite[Proposition 3.3.2]{R-Z}.

Fix $n \geq 0$. Let $\langle x_k,y_k \rangle$ be the free pro-$\mathcal{C}$ group on the two generators $x_k, y_k$, for $1 \leq k \leq n$, and write $D_n$ for their direct product ($D_0$ is the empty product, i.e. the trivial group). Let $F_{\mathbb{Z}_{\hat{\mathcal{C}}}}$ be the free pro-$\mathcal{C}$ group on generators $\{a_l : l \in \mathbb{Z}_{\hat{\mathcal{C}}}\}$, given the usual pro-$\mathcal{C}$ topology. We define a continuous left $D_n$-action on $F_{\mathbb{Z}_{\hat{\mathcal{C}}}}$ in the following way. For each $k$, we have a continuous homomorphism $\langle x_k,y_k \rangle \rightarrow \mathbb{Z}_{\hat{\mathcal{C}}}$ defined by $x_k,y_k \mapsto 1$. Now this gives a continuous $D_n \rightarrow \mathbb{Z}_{\hat{\mathcal{C}}}^n$. Composing this with $n$-fold addition $$\mathbb{Z}_{\hat{\mathcal{C}}}^n \rightarrow \mathbb{Z}_{\hat{\mathcal{C}}}, (a_1, \ldots, a_n) \mapsto a_1 + \cdots + a_n$$ gives a continuous homomorphism $$f: D_n \rightarrow \mathbb{Z}_{\hat{\mathcal{C}}}.$$ Now we can define a continuous action of $D_n$ on $\mathbb{Z}_{\hat{\mathcal{C}}}$ by $$D_n \times \mathbb{Z}_{\hat{\mathcal{C}}} \xrightarrow{f \times id} \mathbb{Z}_{\hat{\mathcal{C}}} \times \mathbb{Z}_{\hat{\mathcal{C}}} \xrightarrow{+} \mathbb{Z}_{\hat{\mathcal{C}}}.$$ Finally, by \cite[Exercise 5.6.2(d)]{R-Z}, this action extends uniquely to a continuous action on $F_{\mathbb{Z}_{\hat{\mathcal{C}}}}$.

Now we can form the semi-direct product $A_n = F_{\mathbb{Z}_{\hat{\mathcal{C}}}} \rtimes D_n$, and by \cite[Exercise 5.6.2(b),(c)]{R-Z} it is a pro-$\mathcal{C}$ group. We record here the universal property of semi-direct products of pro-$\mathcal{C}$ groups; it is a direct translation of the universal property of semi-direct products of abstract groups from \cite[III.2.10, Proposition 27 (2)]{Bourbaki}, which we will need later.

\begin{lem}
\label{semi-direct}
Suppose we have pro-$\mathcal{C}$ groups $N$, $H$ and $K$, with continuous homomorphisms $\sigma: H \rightarrow Aut(N)$ (with the compact-open topology), $f: N \rightarrow K$ and $g: H \rightarrow K$ such that, for all $x \in N$ and $y \in H$, $$g(y)f(x)g(y^{-1}) = f(\sigma(y)(x)).$$ Then there is a unique continuous homomorphism $$h: N \rtimes H \rightarrow K$$ such that $$f = h \circ (N \rightarrow N \rtimes H)$$ and $$g = h \circ (H \rightarrow N \rtimes H).$$
\end{lem}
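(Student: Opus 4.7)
The plan is to reduce this to the already-known universal property of abstract semi-direct products, then verify that the abstract homomorphism produced is continuous, and finally observe that continuity forces uniqueness.

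First, I would apply the abstract universal property from \cite[III.2.10, Proposition 27 (2)]{Bourbaki} to the underlying abstract groups. The compatibility relation $g(y)f(x)g(y^{-1}) = f(\sigma(y)(x))$ is exactly what that proposition requires, so there is a unique abstract group homomorphism $h: N \rtimes H \to K$ satisfying the two composition identities; explicitly, this homomorphism is given by $h(x,y) = f(x)g(y)$.

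Next, I would check continuity. The underlying topological space of $N \rtimes H$ is $N \times H$ with the product topology (this is part of the construction of pro-$\mathcal{C}$ semi-direct products from \cite[Exercise 5.6.2]{R-Z}). The map $h$ factors as
\[
N \rtimes H \xrightarrow{f \times g} K \times K \xrightarrow{\mu_K} K,
\]
where $f \times g$ sends $(x,y)$ to $(f(x),g(y))$ and $\mu_K$ is multiplication in $K$. The first map is continuous because each coordinate $f$ and $g$ is continuous, and $\mu_K$ is continuous because $K$ is a topological group. Hence $h$ is continuous.

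For uniqueness, any continuous homomorphism $h': N \rtimes H \to K$ with the two compatibility properties is in particular an abstract homomorphism with those properties, so must agree with $h$ by the abstract uniqueness statement. The only subtle point — and the one place I would double-check carefully — is that the product topology on $N \rtimes H$ is indeed the right one to use when asserting continuity of $h$, but this is essentially by definition of the semi-direct product in the pro-$\mathcal{C}$ category as cited.
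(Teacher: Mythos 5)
Your proposal is correct and follows the same approach as the paper: invoke the abstract universal property from Bourbaki to obtain $h(x,y) = f(x)g(y)$, then observe continuity by factoring through $N \times H \to K \times K \to K$. The only addition is your explicit remark that $N \rtimes H$ carries the product topology, which the paper leaves implicit.
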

\begin{proof}
By \cite[III.2.10, Proposition 27 (2)]{Bourbaki} we know there is a unique homomorphism $h: N \rtimes H \rightarrow K$ satisfying these conditions, except that we need to check $h$ is continuous. The proof in \cite{Bourbaki} constructs $h$ as the map $(x,y) \mapsto f(x)g(y)$; this is the composite of the continuous maps (of sets) $$N \times H \rightarrow K \times K \rightarrow K,$$ where the second map is just multiplication in $K$.
\end{proof}

We need two more results about the $A_n$ before we can prove the main proposition. Let $\rtimes \langle x_n \rangle$ be the free pro-$\mathcal{C}$ group generated by $x_n$.

\begin{lem}
For each $n>0$, $F_{\mathbb{Z}_{\hat{\mathcal{C}}}} \rtimes \langle x_n \rangle$ is the free pro-$\mathcal{C}$ group on two generators.
\end{lem}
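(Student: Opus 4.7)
The plan is to construct mutually inverse continuous homomorphisms between $F_{\mathbb{Z}_{\hat{\mathcal{C}}}} \rtimes \langle x_n \rangle$ and the free pro-$\mathcal{C}$ group $F_2$ on two generators $a, x$, matching $a_0 \leftrightarrow a$ and $x_n \leftrightarrow x$. First observe that the $\langle x_n\rangle$-action on $F_{\mathbb{Z}_{\hat{\mathcal{C}}}}$ sends $a_l \mapsto a_{l+1}$, since $f(x_n) = 1$ by construction; this motivates sending $a_l$ to $x^l a x^{-l}$ in $F_2$.

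One direction is immediate from the universal property of $F_2$: the assignment $a \mapsto a_0$, $x \mapsto x_n$ extends uniquely to a continuous homomorphism $\psi: F_2 \to F_{\mathbb{Z}_{\hat{\mathcal{C}}}} \rtimes \langle x_n \rangle$.

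For the other direction, I build the pieces needed for Lemma \ref{semi-direct}. The closed subgroup $\overline{\langle x \rangle} \leq F_2$ is procyclic pro-$\mathcal{C}$, hence canonically identified with $\mathbb{Z}_{\hat{\mathcal{C}}}$ via $1 \mapsto x$, so $l \mapsto x^l$ is a continuous map $\mathbb{Z}_{\hat{\mathcal{C}}} \to F_2$. Composing with inversion and with conjugation of $a$ gives a continuous map $\mathbb{Z}_{\hat{\mathcal{C}}} \to F_2$, $l \mapsto x^l a x^{-l}$, and the universal property of $F_{\mathbb{Z}_{\hat{\mathcal{C}}}}$ then produces a continuous homomorphism $\phi_1: F_{\mathbb{Z}_{\hat{\mathcal{C}}}} \to F_2$ with $\phi_1(a_l) = x^l a x^{-l}$. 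Together with the obvious $\phi_2: \langle x_n \rangle \to F_2$, $x_n \mapsto x$, I verify the equivariance condition of Lemma \ref{semi-direct} on the topological generators $a_l$:
\[
\phi_2(x_n)\, \phi_1(a_l)\, \phi_2(x_n)^{-1} = x \cdot x^l a x^{-l} \cdot x^{-1} = x^{l+1} a x^{-(l+1)} = \phi_1(a_{l+1}) = \phi_1(x_n \cdot a_l),
\]
and since both sides are continuous homomorphisms on $F_{\mathbb{Z}_{\hat{\mathcal{C}}}}$ for fixed $x_n$, this identity extends to all of $F_{\mathbb{Z}_{\hat{\mathcal{C}}}}$. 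Lemma \ref{semi-direct} then supplies a continuous $\phi: F_{\mathbb{Z}_{\hat{\mathcal{C}}}} \rtimes \langle x_n \rangle \to F_2$ extending $\phi_1$ and $\phi_2$.

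Finally I check that $\psi$ and $\phi$ are mutually inverse: $\psi\phi$ sends $a_l \mapsto x^l a x^{-l} \mapsto x_n^l a_0 x_n^{-l} = a_l$ and fixes $x_n$, while $\phi\psi$ fixes $a$ and $x$; since continuous homomorphisms that agree with the identity on a topologically generating set are the identity, both compositions are identities. The only step requiring genuine care is the continuity of $l \mapsto x^l a x^{-l}$, which reduces to recognising $\overline{\langle x \rangle} \cong \mathbb{Z}_{\hat{\mathcal{C}}}$ inside $F_2$; everything else is a formal application of universal properties.
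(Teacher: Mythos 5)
Your proof is correct, but it takes a somewhat different route from the paper's. The paper proves the statement by verifying the universal property of the free pro-$\mathcal{C}$ group on $\{a_0, x_n\}$ directly: for an arbitrary pro-$\mathcal{C}$ group $K$ and any choice of images of $a_0$ and $x_n$, it constructs the induced continuous homomorphism out of $F_{\mathbb{Z}_{\hat{\mathcal{C}}}} \rtimes \langle x_n \rangle$, first reducing to finite $K$ by the inverse-limit argument on \cite[p.91]{R-Z}. You instead instantiate this construction at the single generic target $K = F_2$ and then exhibit an explicit inverse $\psi$ coming from the universal property of $F_2$. Both proofs lean on Lemma \ref{semi-direct} to assemble a map out of the semidirect product, but yours trades the quantification over all $K$ (and the reduction to finite quotients) for two things: the observation that $\overline{\langle x\rangle} \cong \mathbb{Z}_{\hat{\mathcal{C}}}$ inside $F_2$ — which you should justify, e.g.\ by noting that the canonical map $\mathbb{Z}_{\hat{\mathcal{C}}} = F_{\mathcal{C}}(\{x\}) \to F_2$ is split by the retraction $a \mapsto 1$, $x \mapsto x$, hence injective — and the explicit verification that $\phi$ and $\psi$ are mutually inverse. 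One further small point: when you check the equivariance hypothesis of Lemma \ref{semi-direct}, you verify it for the topological generators $a_l$ of $F_{\mathbb{Z}_{\hat{\mathcal{C}}}}$ and for $y = x_n$, and note continuity in the first variable extends it to all of $F_{\mathbb{Z}_{\hat{\mathcal{C}}}}$; you should also say a word about why it extends from $y = x_n$ to all $y \in \langle x_n\rangle$ — the set of $y$ satisfying the identity for all $x$ is a closed subgroup of $\langle x_n\rangle$ (closed by continuity, subgroup by the cocycle-type computation) containing $x_n$, hence equals $\langle x_n\rangle$. This is the same point the paper elides with ``this is clear by construction.''
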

\begin{proof}
We will show that this group satisfies the requisite universal property. We claim that it is generated by $a_0$ and $x_n$. Clearly allowing $x_n$ (and $x_n^{-1}$) to act on $a_0$ gives $a_l$, for each $l \in \mathbb{Z}$. Now $\{a_l : l \in \mathbb{Z}_{\hat{\mathcal{C}}}\}$ (abstractly) generates a dense subgroup $H$ of $F_{\mathbb{Z}_{\hat{\mathcal{C}}}}$; $\{a_l : l \in \mathbb{Z}\}$ is dense in $\{a_l : l \in \mathbb{Z}_{\hat{\mathcal{C}}}\}$, so it (abstractly) generates a dense subgroup $K$ of $H$; by transitivity of denseness, $K$ is dense in $F_{\mathbb{Z}_{\hat{\mathcal{C}}}}$, so $\{a_l : l \in \mathbb{Z}\}$ topologically generates $F_{\mathbb{Z}_{\hat{\mathcal{C}}}}$.

It remains to show that given a pro-$\mathcal{C}$ $K$ and a map $$f: \{a_0,x_n\} \rightarrow K$$ there is a continuous homomorphism $$g: F_{\mathbb{Z}_{\hat{\mathcal{C}}}} \rtimes \langle x_n \rangle \rightarrow K$$ such that $f = g \iota$, where $\iota$ is the inclusion $\{a_0,x_n\} \rightarrow F_{\mathbb{Z}_{\hat{\mathcal{C}}}} \rtimes \langle x_n \rangle$. Observe, as in \cite[p.91]{R-Z}, that by the universal property of inverse limits it suffices to check the existence of $g$ when $K$ is finite.

To construct $g$, we first note that $f|_{x_n}$ extends uniquely to a continuous homomorphism $$g': \langle x_n \rangle \rightarrow K.$$ Now we define a continuous map of sets $$f': \{a_l: l \in \mathbb{Z}_{\hat{\mathcal{C}}}\} \rightarrow K$$ by $$f'(a_l) = g'(l \cdot x_n) f(a_0) g'(l \cdot x_n)^{-1},$$ where we write $l \cdot x_n$ for the image of $l$ under the obvious isomorphism $\mathbb{Z}_{\hat{\mathcal{C}}} \cong \langle x_n \rangle$; $f'$ extends uniquely to a continuous homomorphism $$g'': F_{\mathbb{Z}_{\hat{\mathcal{C}}}} \rightarrow K.$$ Finally, by the universal property of semi-direct products, Lemma \ref{semi-direct}, we will have the existence of a continuous homomorphism $g$ satisfying the required property as long as $$g'(y) g''(x) g'(y)^{-1} = g''(\sigma(y)(x)),$$ for all $x \in F_{\mathbb{Z}_{\hat{\mathcal{C}}}}$ and $y \in \langle x_n \rangle$, where $\sigma$ is the continuous homomorphism $\langle x_n \rangle \rightarrow Aut(F_{\mathbb{Z}_{\hat{\mathcal{C}}}})$. This is clear by construction.
\end{proof}

By Corollary \ref{fgfree}, $F_{\mathbb{Z}_{\hat{\mathcal{C}}}} \rtimes \langle x_n \rangle$ is now of type $\FP_\infty$ over $\mathbb{Z}_{\hat{\mathcal{C}}}$; hence, by Theorem \ref{extensions}, $$A_{n-1} \rtimes \langle x_n \rangle = (F_{\mathbb{Z}_{\hat{\mathcal{C}}}} \rtimes \langle x_n \rangle) \rtimes D_{n-1}$$ is too.

The next lemma is entirely analogous to \cite[Proposition 2.15]{Bieri}.

\begin{lem}
\label{fpnfg}
If a pro-$\mathcal{C}$ group $G$ is of type $\FP_n$ over $\mathbb{Z}_{\hat{\mathcal{C}}}$ then $H^{\mathbb{Z}_{\hat{\mathcal{C}}}}_m(G, \mathbb{Z}_{\hat{\mathcal{C}}})$ is a finitely generated profinite abelian group for $0 \leq m \leq n$.
\end{lem}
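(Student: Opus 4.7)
The plan is to use the definition of $\FP_n$ to write down a projective resolution of the trivial module that is finitely generated up to degree $n$, tensor through, and observe that the resulting homology groups are subquotients of finitely generated profinite abelian groups, so are themselves finitely generated. Concretely, since $G$ is of type $\FP_n$, I would pick a resolution
\[
\cdots \to P_{n+1} \to P_n \to \cdots \to P_0 \to \mathbb{Z}_{\hat{\mathcal{C}}} \to 0
\]
with $P_0,\ldots,P_n$ finitely generated projective $\mathbb{Z}_{\hat{\mathcal{C}}}\llbracket G \rrbracket$-modules, and set $Q_k = \mathbb{Z}_{\hat{\mathcal{C}}} \hat{\otimes}_{\mathbb{Z}_{\hat{\mathcal{C}}}\llbracket G \rrbracket} P_k$; then $H^{\mathbb{Z}_{\hat{\mathcal{C}}}}_m(G,\mathbb{Z}_{\hat{\mathcal{C}}}) = H_m(Q_\ast)$, and each $Q_k$ with $k\leq n$ is a direct summand of some $\mathbb{Z}_{\hat{\mathcal{C}}}^{r_k}$, hence a finitely generated profinite abelian group.

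The crux --- the only step that really requires thought --- is to show that the class of finitely generated profinite abelian groups is closed under closed subgroups. To see this, I would decompose any profinite abelian group $A$ into its Sylow pro-$p$ components $A = \prod_p A_p$; by the Burnside basis theorem one has $d(A) = \sup_p d(A_p)$, so $A$ is finitely generated precisely when this supremum is finite. Since $\mathbb{Z}_p$ is a principal ideal domain, any closed $\mathbb{Z}_p$-submodule $B_p \leq A_p$ satisfies $d(B_p) \leq d(A_p)$, and so any closed subgroup $B = \prod_p B_p \leq A$ satisfies $d(B) \leq d(A) < \infty$. Closure of finite generation under continuous quotients is immediate.

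With this in hand, for each $0 \leq m \leq n$ the kernel $\ker d_m$ is a closed subgroup of the finitely generated profinite abelian group $Q_m$, hence finitely generated; and $\operatorname{im} d_{m+1}$ is a closed subgroup of $Q_m$ (as a continuous image of the compact group $Q_{m+1}$), hence also finitely generated --- this handles the case $m = n$, where $Q_{n+1}$ itself need not be finitely generated. Thus $H^{\mathbb{Z}_{\hat{\mathcal{C}}}}_m(G,\mathbb{Z}_{\hat{\mathcal{C}}}) = \ker d_m / \operatorname{im} d_{m+1}$ is a continuous quotient of a finitely generated profinite abelian group, and hence a finitely generated profinite abelian group, for every $0 \leq m \leq n$.
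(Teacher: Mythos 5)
Your proposal is correct and follows essentially the same outline as the paper's proof: take a partial projective resolution of $\mathbb{Z}_{\hat{\mathcal{C}}}$ finitely generated up to degree $n$, tensor to get the complex $Q_\ast$ whose degree-$\leq n$ terms are finitely generated $\mathbb{Z}_{\hat{\mathcal{C}}}$-modules, and conclude that the homology groups are finitely generated because closed submodules of finitely generated $\mathbb{Z}_{\hat{\mathcal{C}}}$-modules are finitely generated. Where you diverge, modestly, is in justifying that last noetherianity step. The paper simply asserts that $\mathbb{Z}_{\hat{\mathcal{C}}}$ is procyclic, ``hence a principal ideal domain,'' and invokes standard arguments; you instead decompose a profinite abelian group into its Sylow pro-$p$ components, reduce to $\mathbb{Z}_p$, and appeal to $\mathbb{Z}_p$ being a PID together with $d(A) = \sup_p d(A_p)$. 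Your route has the virtue of being literally correct: $\mathbb{Z}_{\hat{\mathcal{C}}}$ is in general \emph{not} an integral domain (e.g.\ $\hat{\mathbb{Z}} = \prod_p \mathbb{Z}_p$ has obvious zero divisors), so ``principal ideal domain'' in the paper should be read as ``principal ideal ring''; the intent is clear and the paper's conclusion valid, but you sidestep the slip entirely by working one prime at a time. Your explicit remark that $\operatorname{im} d_{n+1}$ is a closed subgroup of $Q_n$ even though $Q_{n+1}$ need not be finitely generated is also a small point of care the paper leaves implicit. Both proofs are correct; yours is slightly more pedantic in exactly the place where pedantry pays off.
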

\begin{proof}
Take a projective resolution $P_\ast$ of $\mathbb{Z}_{\hat{\mathcal{C}}}$ as a $\mathbb{Z}_{\hat{\mathcal{C}}} \llbracket G \rrbracket $-module with trivial $G$-action, with $P_0, \ldots, P_n$ finitely generated. Then $H^{\mathbb{Z}_{\hat{\mathcal{C}}}}_\ast(G,\mathbb{Z}_{\hat{\mathcal{C}}})$ is the homology of the complex $\mathbb{Z}_{\hat{\mathcal{C}}} \hat{\otimes}_{\mathbb{Z}_{\hat{\mathcal{C}}} \llbracket G \rrbracket } P_\ast$, for which $$\mathbb{Z}_{\hat{\mathcal{C}}} \hat{\otimes}_{\mathbb{Z}_{\hat{\mathcal{C}}} \llbracket G \rrbracket } P_0, \ldots, \mathbb{Z}_{\hat{\mathcal{C}}} \hat{\otimes}_{\mathbb{Z}_{\hat{\mathcal{C}}} \llbracket G \rrbracket } P_n$$ are finitely generated $\mathbb{Z}_{\hat{\mathcal{C}}}$-modules. Now $\mathbb{Z}_{\hat{\mathcal{C}}}$ is procyclic, hence a principal ideal domain, which implies by standard arguments that $\mathbb{Z}_{\hat{\mathcal{C}}}$ is noetherian in the sense that submodules of finitely generated $\hat{\mathbb{Z}}$-modules are finitely generated, and the result follows: finitely generated pro-$\mathcal{C}$ abelian groups are exactly the finitely generated pro-$\mathcal{C}$ $\mathbb{Z}_{\hat{\mathcal{C}}}$-modules.
\end{proof}

\begin{prop}
\begin{enumerate}[(i)]
\item $A_n$ is of type $\FP_n$ over $\mathbb{Z}_{\hat{\mathcal{C}}}$.
\item $A_n$ is not of type $\FP_{n+1}$ over $\mathbb{Z}_{\hat{\mathcal{C}}}$.
\end{enumerate}
\end{prop}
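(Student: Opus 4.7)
\emph{Part (i).} I argue by induction on $n$, with the base case $n=0$ vacuous since every profinite group is of type $\FP_0$. For the inductive step I establish
$$A_n \;\cong\; (A_{n-1} \rtimes \langle x_n \rangle) \,\amalg_{A_{n-1}}\, (A_{n-1} \rtimes \langle y_n \rangle),$$
where $\langle x_n \rangle$ and $\langle y_n \rangle$ act on $A_{n-1}$ trivially on the $D_{n-1}$-factor and by translation on $F_{\mathbb{Z}_{\hat{\mathcal{C}}}}$ through the restrictions of $f$. Since $\langle x_n, y_n \rangle$ is the free pro-$\mathcal{C}$ product $\langle x_n \rangle \amalg \langle y_n \rangle$, one assembles this identification using Lemma \ref{semi-direct} together with the universal property of the free pro-$\mathcal{C}$ product. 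The amalgamation is proper because both factors embed as closed subgroups of $A_n$ and agree on $A_{n-1}$. By the inductive hypothesis $A_{n-1}$ is of type $\FP_{n-1}$, while the discussion just before Lemma \ref{fpnfg} already shows that $A_{n-1} \rtimes \langle x_n \rangle$ and $A_{n-1} \rtimes \langle y_n \rangle$ are of type $\FP_\infty$. The proposition on proper amalgamated pro-$\mathcal{C}$ products then yields $A_n$ of type $\FP_n$.

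\emph{Part (ii).} By Lemma \ref{fpnfg} it suffices to show that $H_{n+1}^{\mathbb{Z}_{\hat{\mathcal{C}}}}(A_n, \mathbb{Z}_{\hat{\mathcal{C}}})$ fails to be a finitely generated $\mathbb{Z}_{\hat{\mathcal{C}}}$-module. I apply the Lyndon--Hochschild--Serre spectral sequence of Theorem \ref{lhs} to $1 \to F_{\mathbb{Z}_{\hat{\mathcal{C}}}} \to A_n \to D_n \to 1$ with trivial coefficients. Since $F_{\mathbb{Z}_{\hat{\mathcal{C}}}}$ is free pro-$\mathcal{C}$, the rows $s \geq 2$ vanish; since $D_n$ is a direct product of $n$ rank-two free pro-$\mathcal{C}$ groups, it has cohomological dimension $n$, so the columns $r > n$ vanish too. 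All differentials meeting position $(n,1)$ consequently land in or emanate from zero groups, giving
$$E^\infty_{n,1} = E^2_{n,1} = H_n^{\mathbb{Z}_{\hat{\mathcal{C}}}}\bigl(D_n,\, F_{\mathbb{Z}_{\hat{\mathcal{C}}}}^{\mathrm{ab}}\bigr)$$
as a subquotient of $H_{n+1}^{\mathbb{Z}_{\hat{\mathcal{C}}}}(A_n, \mathbb{Z}_{\hat{\mathcal{C}}})$. Because $\mathbb{Z}_{\hat{\mathcal{C}}}$ is noetherian (recorded in the proof of Lemma \ref{fpnfg}), it suffices to prove that this subquotient is not finitely generated.

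Now $D_n$ acts on $F_{\mathbb{Z}_{\hat{\mathcal{C}}}}^{\mathrm{ab}} \cong \mathbb{Z}_{\hat{\mathcal{C}}} \llbracket \mathbb{Z}_{\hat{\mathcal{C}}} \rrbracket$ by translation through $f \colon D_n \twoheadrightarrow \mathbb{Z}_{\hat{\mathcal{C}}}$, and, setting $K = \ker f$, one has $F_{\mathbb{Z}_{\hat{\mathcal{C}}}}^{\mathrm{ab}} \cong \mathbb{Z}_{\hat{\mathcal{C}}} \llbracket D_n \rrbracket \,\hat{\otimes}_{\mathbb{Z}_{\hat{\mathcal{C}}} \llbracket K \rrbracket}\, \mathbb{Z}_{\hat{\mathcal{C}}}$, so Shapiro's lemma for profinite groups gives $H_n^{\mathbb{Z}_{\hat{\mathcal{C}}}}(D_n, F_{\mathbb{Z}_{\hat{\mathcal{C}}}}^{\mathrm{ab}}) \cong H_n^{\mathbb{Z}_{\hat{\mathcal{C}}}}(K, \mathbb{Z}_{\hat{\mathcal{C}}})$. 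The case $n=0$ is immediate: $K$ is trivial and $H_0^{\mathbb{Z}_{\hat{\mathcal{C}}}}(D_0, F_{\mathbb{Z}_{\hat{\mathcal{C}}}}^{\mathrm{ab}})$ is just $F_{\mathbb{Z}_{\hat{\mathcal{C}}}}^{\mathrm{ab}}$, the free pro-$\mathcal{C}$ $\mathbb{Z}_{\hat{\mathcal{C}}}$-module on the infinite profinite space $\mathbb{Z}_{\hat{\mathcal{C}}}$. For $n \geq 1$ I expect to show $H_n^{\mathbb{Z}_{\hat{\mathcal{C}}}}(K, \mathbb{Z}_{\hat{\mathcal{C}}})$ is infinitely generated by running a further Lyndon--Hochschild--Serre spectral sequence for $1 \to K \to D_n \to \mathbb{Z}_{\hat{\mathcal{C}}} \to 1$ and extracting from the top-degree homology a subquotient visibly of infinite rank over $\mathbb{Z}_{\hat{\mathcal{C}}}$, directly paralleling the calculation underlying \cite[Proposition 2.14]{Bieri}. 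This last computation is the main obstacle.
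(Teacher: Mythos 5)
Your decomposition for part~(i) is sound but differs from the paper's. The paper views $A_n$ as the proper pro-$\mathcal{C}$ $\HNN$-extension $\HNN(A_{n-1} \rtimes \langle x_n \rangle, A_{n-1}, y_n)$ and invokes Proposition~\ref{hnn'}, whereas you write $A_n$ as the proper amalgamated free pro-$\mathcal{C}$ product $(A_{n-1} \rtimes \langle x_n \rangle) \amalg_{A_{n-1}} (A_{n-1} \rtimes \langle y_n \rangle)$; both identifications follow by the same universal-property check that lifts $\langle x_n, y_n \rangle = \langle x_n \rangle \amalg \langle y_n \rangle$ through the semi-direct product, and both feed into a closure result from Section~\ref{groupdspm} with the same inductive hypotheses. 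This part is fine, just a different (symmetric rather than one-sided) presentation of the same underlying fact.

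Part~(ii), however, has a real gap. You reduce, via the Lyndon--Hochschild--Serre spectral sequence for $1 \to F_{\mathbb{Z}_{\hat{\mathcal{C}}}} \to A_n \to D_n \to 1$ and Shapiro's lemma, to showing that $H_n^{\mathbb{Z}_{\hat{\mathcal{C}}}}(K, \mathbb{Z}_{\hat{\mathcal{C}}})$ is infinitely generated, and then concede that this final computation is not carried out. Two issues. First, the reduction itself rests on the unproved claim that $D_n$ has cohomological dimension exactly $n$; this is plausible (each free factor has $\mathrm{cd}=1$, and one expects additivity for such products) but it is not free --- it requires a K\"unneth-type argument in the pro-$\mathcal{C}$ setting, which you do not supply. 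Second, and more seriously, the spectral sequence for $1 \to K \to D_n \to \mathbb{Z}_{\hat{\mathcal{C}}} \to 1$ runs the wrong way: it expresses $H_\ast(D_n)$ in terms of $H_\ast(K)$ and $H_\ast(\mathbb{Z}_{\hat{\mathcal{C}}})$, so it does not obviously let you extract information about $H_n(K)$ from the (finitely generated) homology of $D_n$. There is no evident way to close this loop. The paper sidesteps all of this: it uses the Mayer--Vietoris sequence for the $\HNN$-decomposition together with Lemma~\ref{fpnfg}, and argues by induction --- the base case $H_1^{\mathbb{Z}_{\hat{\mathcal{C}}}}(A_0, \mathbb{Z}_{\hat{\mathcal{C}}}) = F_{\mathbb{Z}_{\hat{\mathcal{C}}}}^{\mathrm{ab}}$ is visibly infinitely generated, and in the Mayer--Vietoris sequence the terms $H_{n+1}^{\mathbb{Z}_{\hat{\mathcal{C}}}}(A_{n-1}\rtimes\langle x_n\rangle, \mathbb{Z}_{\hat{\mathcal{C}}})$ and $H_n^{\mathbb{Z}_{\hat{\mathcal{C}}}}(A_{n-1}\rtimes\langle x_n\rangle, \mathbb{Z}_{\hat{\mathcal{C}}})$ are finitely generated because that group is of type $\FP_\infty$, while $H_n^{\mathbb{Z}_{\hat{\mathcal{C}}}}(A_{n-1},\mathbb{Z}_{\hat{\mathcal{C}}})$ is not by the inductive hypothesis, forcing $H_{n+1}^{\mathbb{Z}_{\hat{\mathcal{C}}}}(A_n,\mathbb{Z}_{\hat{\mathcal{C}}})$ to be infinitely generated. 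I would recommend replacing your spectral-sequence calculation with this Mayer--Vietoris induction; it avoids both the dimension count for $D_n$ and the unresolved computation of $H_n(K)$.
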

\begin{proof}
\begin{enumerate}[(i)]
\item $n=0$ is trivial. Next, we observe that $A_n$ can be thought of as the pro-$\mathcal{C}$ $\HNN$-extension of $A_{n-1} \rtimes \langle x_n \rangle$ with associated subgroup $A_{n-1}$ and stable letter $y_n$, since the universal properties are the same in this case. It is clear that this extension is proper.

We can now use induction: assume $A_{n-1}$ is of type $\FP_{n-1}$ over $\mathbb{Z}_{\hat{\mathcal{C}}}$ (which we already have for $n=0$). Then $A_{n-1} \rtimes \langle x_n \rangle$ is of type $\FP_\infty$, so the result follows from Proposition \ref{hnn'}.

\item By Lemma \ref{fpnfg}, it is enough to show that, for each $n$, $H^{\mathbb{Z}_{\hat{\mathcal{C}}}}_{n+1}(A_n,\mathbb{Z}_{\hat{\mathcal{C}}})$ is not finitely generated. We prove this by induction once more. Exactly as in \cite[Lemma 6.8.6]{R-Z}, $$H^{\mathbb{Z}_{\hat{\mathcal{C}}}}_1(A_0,\mathbb{Z}_{\hat{\mathcal{C}}}) = F_{\mathbb{Z}_{\hat{\mathcal{C}}}}/\overline{[F_{\mathbb{Z}_{\hat{\mathcal{C}}}},F_{\mathbb{Z}_{\hat{\mathcal{C}}}}]},$$ i.e. the pro-$\mathcal{C}$ free abelian group on the set $\mathbb{Z}_{\hat{\mathcal{C}}}$, not finitely generated. As before, $A_n$ is the pro-$\mathcal{C}$ $\HNN$-extension of $A_{n-1} \rtimes \langle x_n \rangle$ with associated subgroup $A_{n-1}$ and stable letter $y_n$, and we get the Mayer-Vietoris sequence

\begin{align*}
\cdots &\rightarrow H^{\mathbb{Z}_{\hat{\mathcal{C}}}}_{n+1}(A_{n-1} \rtimes \langle x_n \rangle,\mathbb{Z}_{\hat{\mathcal{C}}}) \rightarrow H^{\mathbb{Z}_{\hat{\mathcal{C}}}}_{n+1}(A_n,\mathbb{Z}_{\hat{\mathcal{C}}}) \rightarrow H^{\mathbb{Z}_{\hat{\mathcal{C}}}}_n(A_{n-1},\mathbb{Z}_{\hat{\mathcal{C}}}) \\
&\rightarrow H^{\mathbb{Z}_{\hat{\mathcal{C}}}}_n(A_{n-1} \rtimes \langle x_n \rangle,\mathbb{Z}_{\hat{\mathcal{C}}}) \rightarrow \cdots
\end{align*}

By Lemma \ref{fpnfg} $H^{\mathbb{Z}_{\hat{\mathcal{C}}}}_{n+1}(A_{n-1} \rtimes \langle x_n \rangle,\mathbb{Z}_{\hat{\mathcal{C}}})$ and $H^{\mathbb{Z}_{\hat{\mathcal{C}}}}_n(A_{n-1} \rtimes \langle x_n \rangle,\mathbb{Z}_{\hat{\mathcal{C}}})$ are finitely generated; by hypothesis $H^{\mathbb{Z}_{\hat{\mathcal{C}}}}_n(A_{n-1},\mathbb{Z}_{\hat{\mathcal{C}}})$ is not finitely generated. Hence $H^{\mathbb{Z}_{\hat{\mathcal{C}}}}_{n+1}(A_n,\mathbb{Z}_{\hat{\mathcal{C}}})$ is not finitely generated, as required.
\end{enumerate}
\end{proof}

\section{An alternative finiteness condition}
\label{altfin}

According to \cite[Open Question 6.12.1]{R-Z}, Kropholler has posed the question: ``Let $G$ be a soluble pro-$p$ group such that $H^n(G,\mathbb{Z}/p\mathbb{Z})$ is finite for every $n$. Is $G$ poly[-pro]cyclic?''. Now, we know by \cite[Corollary 4.2.5]{S-W} that requiring $H^n(G,\mathbb{Z}/p\mathbb{Z})$ to be finite for every $n$ is equivalent to requiring that $G$ be of type $p$-$\FP_\infty$, and by \cite[Proposition 4.2.3]{S-W} equivalent to requiring that $H^n(G,A)$ is finite for every $n$ and every finite $\mathbb{Z}_p \llbracket G \rrbracket $-module $A$. Also, by \cite[Proposition 8.2.2]{Wilson}, $G$ is poly-procyclic if and only if it has finite rank. So there are two possible profinite analogues of this question, either of which, if the answer were yes, would imply \cite[Open Question 6.12.1]{R-Z}.

\begin{question}
\label{oq1}
Let $G$ be a soluble profinite group such that $H^n(G,A)$ is finite for every $n$ and every finite $\hat{\mathbb{Z}} \llbracket G \rrbracket $-module $A$. Is $G$ of finite rank?
\end{question}

\begin{question}
\label{oq2}
Let $G$ be a soluble profinite group of type $\FP_\infty$ over $\hat{\mathbb{Z}}$. Is $G$ of finite rank?
\end{question}

We will show that the answer to the first of these questions is no. Question \ref{oq2} remains open.

In this section, all modules will be left modules.

By analogy to the pro-$p$ case, we define profinite $G$ to be \emph{of type $\FP'_n$} (over $\hat{\mathbb{Z}}$) if, for all finite $\hat{\mathbb{Z}} \llbracket G \rrbracket $-modules $A$, $m \leq n$, $H_{\hat{\mathbb{Z}}}^m(G,A)$ is finite. This definition extends in the obvious way to profinite modules over any profinite ring. Clearly, by the Lyndon-Hochschild-Serre spectral sequence \cite[Theorem 7.2.4]{R-Z}, being of type $\FP'_n$ is closed under extensions. In the same way as \cite[Proposition 4.2.2]{S-W}, $\FP_n \Rightarrow \FP'_n$ for all $n \leq \infty$; in this section we will see that the converse is not true.

\begin{lem}
\label{sylow}
If $G$ is pronilpotent of type $\FP_1$, the minimal number of generators of its $p$-Sylow subgroups is bounded above.
\end{lem}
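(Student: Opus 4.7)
The plan is to reduce the statement to the fact that a pronilpotent profinite group is a direct product of its Sylow subgroups, and then apply Proposition \ref{fingen} combined with Remark \ref{prosoluble}(a) to deduce that type $\FP_1$ implies finite generation in this setting.

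First I would recall that for any pronilpotent profinite group $G$, one has a canonical decomposition $G \cong \prod_p G_p$ as an internal (and topological) direct product of its $p$-Sylow subgroups; in particular, for every prime $p$, the $p$-Sylow subgroup $G_p$ is a continuous quotient of $G$ via the projection $\prod_q G_q \twoheadrightarrow G_p$. Consequently $d(G_p) \leq d(G)$ for every $p$, so it suffices to prove that $d(G)$ itself is finite.

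Next I would invoke Remark \ref{prosoluble}(a), which asserts that pronilpotent groups satisfy the estimate
\begin{equation*}
d(G/K) \leq d_{\mathbb{Z}[G/K]}(I_{\mathbb{Z}}[G/K])
\end{equation*}
for every open normal subgroup $K$. Combined with Proposition \ref{fingen}, this says precisely that a pronilpotent group is finitely generated as soon as it is of type $\FP_1$ over $\hat{\mathbb{Z}}$. Since $G$ is assumed to be of type $\FP_1$, we conclude $d(G) < \infty$, and therefore $\sup_p d(G_p) \leq d(G) < \infty$, which is the desired uniform bound.

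There is essentially no obstacle here beyond citing the correct earlier results: the whole content is the Sylow decomposition of pronilpotent groups together with the already-established equivalence, for prosoluble (hence pronilpotent) groups, between $\FP_1$ and finite generation. I would keep the write-up short and simply assemble these two facts.
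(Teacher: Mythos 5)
Your proposal is correct and follows essentially the same route as the paper: decompose the pronilpotent group as a direct product of its Sylow subgroups, observe that generators of $G$ project to generators of each $G_p$, and use Proposition \ref{fingen} together with Remark \ref{prosoluble}(a) to deduce that type $\FP_1$ implies finite generation for prosoluble (hence pronilpotent) groups.
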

\begin{proof}
For $G$ pronilpotent, by \cite[Proposition 2.3.8]{R-Z}, $G$ is the direct product of its (unique for each $p$) $p$-Sylow subgroups. If $G$ is finitely generated, pick a set of generators for $G$; then their images in each $p$-Sylow subgroup under the canonical projection map generate that subgroup. Hence the minimal number of generators of the $p$-Sylow subgroups of $G$ is bounded above. We know $G$ is \textit{a fortiori} prosoluble, so by Proposition \ref{fingen} and Remark \ref{prosoluble}(a) $G$ is of type $\FP_1$ if and only if it is finitely generated, and the result follows.
\end{proof}

\begin{lem}
\label{coprime}
Suppose $A$ is a finite $G$-module whose order is coprime to that of $G$. Then $H_{\hat{\mathbb{Z}}}^n(G,A)$ is $0$ for all $n>0$.
\end{lem}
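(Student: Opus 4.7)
The plan is to reduce to the case of cohomology of finite quotients $G/U$ acting on $A$ and then apply the classical vanishing result from finite group cohomology in the coprime case.

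First I would observe that since $A$ is finite and the $\hat{\mathbb{Z}}\llbracket G \rrbracket$-action on $A$ is continuous, the stabiliser of each element of $A$ is open, and their (finite) intersection gives an open normal subgroup $U_0 \triangleleft G$ that acts trivially on $A$. Hence $A$ is canonically a $\hat{\mathbb{Z}}[G/U_0]$-module. Next I want to identify the profinite cohomology $H^n_{\hat{\mathbb{Z}}}(G,A) = \Ext^n_{\hat{\mathbb{Z}}\llbracket G \rrbracket}(\hat{\mathbb{Z}},A)$ with the direct limit $\varinjlim_{U \leq U_0} H^n(G/U, A)$ taken over open normal subgroups $U \leq U_0$; this is a standard identification (see \cite[Section 6.5]{R-Z}) which follows from the fact that for a finitely generated projective $P \in PMod(\hat{\mathbb{Z}}\llbracket G \rrbracket)$ and a finite discrete module $A$, continuous homomorphisms $P \to A$ factor through the finite quotients of $P$.

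Next I would exploit the coprimality hypothesis. The supernatural order $\#G$ is coprime to $|A|$ in the sense that no prime dividing $|A|$ appears in $\#G$. Since $G \twoheadrightarrow G/U$ is surjective, the natural number $|G/U|$ divides $\#G$ supernaturally, so $\gcd(|G/U|, |A|) = 1$ for every open normal $U$ of $G$.

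For each such finite quotient $H = G/U$ acting on the finite abelian group $A$, the classical transfer (corestriction–restriction) argument gives that $|H|$ annihilates $H^n(H,A)$ for $n > 0$, while $|A|$ clearly annihilates $H^n(H,A)$ as well. Since $\gcd(|H|,|A|) = 1$, we conclude $H^n(H,A) = 0$ for $n > 0$. Passing to the direct limit over $U$ then yields $H^n_{\hat{\mathbb{Z}}}(G,A) = 0$ for $n > 0$, as required.

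The step most needing care is the identification of the $\Ext$-based definition of $H^n_{\hat{\mathbb{Z}}}(G,A)$ used in this paper with the usual profinite cohomology $\varinjlim H^n(G/U,A)$ for finite $A$; once this is in place, the rest is a routine transfer-and-direct-limit argument. Everything else (the classical vanishing for finite groups, and the supernatural coprimality book-keeping) is standard.
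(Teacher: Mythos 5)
Your argument is correct, but it takes a genuinely different route from the paper. The paper's proof is a two-line appeal to ready-made profinite machinery: by \cite[Corollary 7.3.3]{R-Z}, $cd_p(G)=0$ for every prime $p$ not dividing $\#G$, so in particular for every $p$ dividing $|A|$; and by the primary decomposition of \cite[Proposition 7.1.4]{R-Z}, $H^n_{\hat{\mathbb{Z}}}(G,A) = \bigoplus_{p \mid |A|} H^n_{\hat{\mathbb{Z}}}(G,A)_p$, so the whole thing vanishes for $n>0$. Your proof instead unpacks what lies underneath: you reduce to finite quotients $G/U$ via the identification $H^n_{\hat{\mathbb{Z}}}(G,A) \cong \varinjlim H^n(G/U,A)$ for finite $A$, and then invoke the classical coprime vanishing for finite groups (restriction--corestriction shows $|G/U|$ kills positive-degree cohomology, while $|A|$ does too, and these are coprime). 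In effect you are re-deriving, in the special case needed, the statement $cd_p(G)=0$ for $p \nmid \#G$ that the paper simply cites. Both approaches are sound. The paper's is shorter and stays entirely within profinite cohomological dimension theory; yours is more elementary and self-contained, at the cost of needing to justify the identification of the paper's $\Ext$-based $H^n_{\hat{\mathbb{Z}}}(G,-)$ with the usual direct-limit description for finite coefficients (a standard fact, and you correctly flag it as the step requiring care, but it is not one the paper explicitly records).
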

\begin{proof}
By \cite[Corollary 7.3.3]{R-Z}, $cd_p(G)=0$ for $p \nmid |G|$. In particular, $$H_{\hat{\mathbb{Z}}}^n(G,A)_p=0 \text{ for all } p \mid |A|, n>0.$$ On the other hand, by \cite[Proposition 7.1.4]{R-Z}, $$H_{\hat{\mathbb{Z}}}^n(G,A) = \bigoplus_{p \mid |A|} H_{\hat{\mathbb{Z}}}^n(G,A_p) = \bigoplus_{p \mid |A|} H_{\hat{\mathbb{Z}}}^n(G,A)_p = 0.$$
\end{proof}

\begin{prop}
\label{fp'n}
Let $G$ be pronilpotent. Then $G$ is of type $\FP'_n$ if and only if every $p$-Sylow subgroup is of type $\FP'_n$.
\end{prop}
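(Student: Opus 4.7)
My approach is to use the fact that a pronilpotent profinite group decomposes as a direct product of its Sylow subgroups, $G = \prod_p G_p$, by \cite[Proposition 2.3.8]{R-Z}. Writing $G_{p'} = \prod_{q \neq p} G_q$, we have $G = G_p \times G_{p'}$ for each prime $p$, so $G_{p'}$ is a closed normal subgroup of $G$ with quotient $G_p$. The plan is to reduce cohomology computations of $G$ with coefficients in a finite module to cohomology computations of the individual $G_p$, using the Lyndon-Hochschild-Serre spectral sequence (the cohomological analogue of Theorem \ref{lhs}, coming from \cite[Theorem 7.2.4]{R-Z}) together with the coprime vanishing from Lemma \ref{coprime}.

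The key computation is as follows. Let $A$ be a finite $\hat{\mathbb{Z}} \llbracket G \rrbracket$-module, and decompose it as a $G$-module into its primary parts $A = \bigoplus_p A_p$; this is a $G$-module decomposition because the $G$-action preserves each $p$-primary summand, and the sum is finite since $A$ is. For each $p$, apply the LHS spectral sequence to the extension $1 \to G_{p'} \to G \to G_p \to 1$ with coefficients $A_p$:
\begin{equation*}
E_2^{r,s} = H^r_{\hat{\mathbb{Z}}}(G_p, H^s_{\hat{\mathbb{Z}}}(G_{p'}, A_p)) \Rightarrow H^{r+s}_{\hat{\mathbb{Z}}}(G, A_p).
\end{equation*}
Since $|A_p|$ is a power of $p$ and the orders of the $G_q$ for $q \neq p$ are coprime to $p$, Lemma \ref{coprime} gives $H^s_{\hat{\mathbb{Z}}}(G_{p'}, A_p) = 0$ for $s > 0$. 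The spectral sequence therefore collapses on the $s = 0$ row, yielding a natural isomorphism
\begin{equation*}
H^m_{\hat{\mathbb{Z}}}(G, A_p) \cong H^m_{\hat{\mathbb{Z}}}(G_p, (A_p)^{G_{p'}}).
\end{equation*}

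For the forward direction, suppose $G$ is of type $\FP'_n$. Given a finite $\hat{\mathbb{Z}} \llbracket G_p \rrbracket$-module $B$, view it as a $\hat{\mathbb{Z}} \llbracket G \rrbracket$-module via the projection $G \twoheadrightarrow G_p$; then $G_{p'}$ acts trivially, so $(B_p)^{G_{p'}} = B_p$ and the coprime summands contribute nothing, giving $H^m_{\hat{\mathbb{Z}}}(G_p, B) \cong H^m_{\hat{\mathbb{Z}}}(G, B)$, which is finite by hypothesis for $m \leq n$. Hence $G_p$ is of type $\FP'_n$. For the backward direction, suppose each $G_p$ is of type $\FP'_n$; given a finite $\hat{\mathbb{Z}} \llbracket G \rrbracket$-module $A$, the decomposition above gives
\begin{equation*}
H^m_{\hat{\mathbb{Z}}}(G, A) = \bigoplus_p H^m_{\hat{\mathbb{Z}}}(G, A_p) = \bigoplus_p H^m_{\hat{\mathbb{Z}}}(G_p, (A_p)^{G_{p'}}),
\end{equation*}
a finite direct sum (only finitely many $p$ divide $|A|$) of finite groups, since each $(A_p)^{G_{p'}}$ is a finite $\hat{\mathbb{Z}} \llbracket G_p \rrbracket$-module and $G_p$ is of type $\FP'_n$.

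The only delicate point is verifying that the $G$-module decomposition $A = \bigoplus_p A_p$ holds and that the required LHS spectral sequence is available for profinite groups with profinite (rather than discrete) coefficients; the former is automatic since automorphisms preserve primary parts, and the latter is provided by \cite[Theorem 7.2.4]{R-Z}. Everything else is a direct application of the already-established tools.
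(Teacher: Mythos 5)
Your approach is essentially the paper's: decompose $G = \prod_p G_p$, use the Lyndon-Hochschild-Serre spectral sequence for the split extension $1 \to G_{p'} \to G \to G_p \to 1$, and kill all but one row with Lemma \ref{coprime}. The paper groups the finitely many primes dividing $|A|$ into a finite sub-product rather than working with $A = \bigoplus_p A_p$ term by term, and proves the direction you call ``forward'' by contraposition, but these are cosmetic differences.

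There is, however, a slip in your forward direction. The displayed isomorphism $H^m_{\hat{\mathbb{Z}}}(G_p, B) \cong H^m_{\hat{\mathbb{Z}}}(G, B)$ is false in general: take $G = \mathbb{Z}_2 \times \mathbb{Z}_3$, $p = 2$, $B = \mathbb{Z}/3\mathbb{Z}$ with trivial action, and $m=1$; then the left side vanishes by Lemma \ref{coprime}, while the right side is $\Hom(\mathbb{Z}_3,\mathbb{Z}/3\mathbb{Z}) \cong \mathbb{Z}/3\mathbb{Z}$. The $q$-primary parts of $B$ for $q \neq p$ contribute nothing to $H^m_{\hat{\mathbb{Z}}}(G_p,-)$, but they can contribute to $H^m_{\hat{\mathbb{Z}}}(G,-)$ through $G_q$. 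The correct chain is: for $m>0$, $H^m_{\hat{\mathbb{Z}}}(G_p,B)=H^m_{\hat{\mathbb{Z}}}(G_p,B_p)$ (Lemma \ref{coprime} applied to $G_p$ and each $B_q$ with $q\neq p$), and $H^m_{\hat{\mathbb{Z}}}(G_p,B_p)\cong H^m_{\hat{\mathbb{Z}}}(G,B_p)$ by the collapsed spectral sequence; since $H^m_{\hat{\mathbb{Z}}}(G,B_p)$ is a direct summand of $H^m_{\hat{\mathbb{Z}}}(G,B)$, it is finite by hypothesis, and the case $m=0$ is trivial. So your conclusion stands, but the intermediate isomorphism as written should be replaced by this corrected chain.
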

\begin{proof}
Suppose every $p$-Sylow subgroup is of type $\FP'_n$. Suppose $A$ is a finite $\hat{\mathbb{Z}} \llbracket G \rrbracket $-module. Now $A$ is finite, so only finitely many primes divide the order of $A$. Suppose $p_1, \ldots, p_m$ are the primes for which $p_i \mid |A|$, and write $\pi$ for the set of primes without $p_1, \ldots, p_m$. Write $G$ again as the direct product of its $p$-Sylow subgroups, $G= \prod_p S_p$. By the Lyndon-Hochschild-Serre spectral sequence (\cite[Theorem 7.2.4]{R-Z}) $\prod_{i=1}^m S_{p_i}$ is of type $\FP'_n$. Thus, applying the spectral sequence again, $H_{\hat{\mathbb{Z}}}^{r+s}(G,A)$ is a sequence of extensions of the groups $H_{\hat{\mathbb{Z}}}^r(\prod_{i=1}^m S_{p_i}, H_{\hat{\mathbb{Z}}}^s(\prod_{p \in \pi} S_p, A))$, which by Lemma \ref{coprime} collapses to give $H_{\hat{\mathbb{Z}}}^{r+s}(\prod_{i=1}^m S_{p_i}, A^{\prod_{p \in \pi} S_p})$, finite.

Conversely, if some $S_p$ is not of type $\FP'_n$, there is some $S_p$-module $A$ and some $k \leq n$ such that $H_{\hat{\mathbb{Z}}}^k(S_p,A)$ is infinite. All groups are of type $\FP_0$ and hence of type $\FP'_0$, so we have $k>0$. Then by Lemma \ref{coprime} we have that $H_{\hat{\mathbb{Z}}}^k(S_p,A) = \bigoplus_{p' \mid |A|} H_{\hat{\mathbb{Z}}}^k(S_p,A_{p'}) = H_{\hat{\mathbb{Z}}}^k(S_p,A_p)$ is infinite, and so we may assume $A = A_p$. Then we can make $A$ a $G$-module by having every $S_{p'}$, $p' \neq p$, act trivially on $A$, and the spectral sequence together with Lemma \ref{coprime} gives that $H_{\hat{\mathbb{Z}}}^k(G,A) = H_{\hat{\mathbb{Z}}}^k(S_p,A^{\prod_{p' \neq p} S_{p'}}) = H_{\hat{\mathbb{Z}}}^m(S_p,A)$, which is infinite, and hence $G$ is not of type $\FP'_n$.
\end{proof}

Finally, as promised, we will answer Question \ref{oq1} by constructing a soluble (in fact torsion-free abelian) profinite group of type $\FP'_\infty$ which is not finitely generated, and hence not of type $\FP_1$ by Proposition \ref{fingen} and Remarks \ref{prosoluble}(a), and not of finite rank.

\begin{example}
\label{fp'notfg}
Write $p_n$ for the $n$th prime, and consider the abelian profinite group $G = \prod_n (\prod_{i=1}^n \mathbb{Z}_{p_n})$. By Lemma \ref{sylow}, $G$ is not of type $\FP_1$. By Example \ref{procyclic}, and the Lyndon-Hochschild-Serre spectral sequence, the $p_n$-Sylow subgroup $\prod_{i=1}^n \mathbb{Z}_{p_n}$ of $G$ is of type $\FP_\infty$ for each $n$, and hence of type $\FP'_\infty$. So by Proposition \ref{fp'n}, $G$ is of type $\FP'_\infty$.
\end{example}

\end{document}